\newcommand{\ds}{\displaystyle}
\newcommand{\lt}{\left}
\newcommand{\rt}{\right}
\def\Vol{\operatorname{Vol}}
\def \CC{\mathbb C}
\def \Re{\operatorname{Re}}
\newtheorem{conjecture}{Conjecture}
\newtheorem{theorem}{Theorem}
\newtheorem{lemma}{Lemma}
\newtheorem{corollary}{Corollary}
\newtheorem{remark}{Remark}
\def\SS{{\mathbb S}}
\def\CC{{\mathbb C}}
\def\RR{{\mathbb R}}
\def\ZZ{{\mathbb Z}}
\def\NN{{\mathbb N}}
\def\Vol{\operatorname{Vol}}
\def\CS{\operatorname{CS}}
\def \Re{\operatorname{Re}}
\def \im{\operatorname{Im}}
\def \Li{\operatorname{Li_2}}
\def \im{\operatorname{Im}}
\def \Arg{\operatorname{Arg}}
\def \det{\operatorname{det}}
\def \Hess{\operatorname{Hess}}
\title{Asymptotics of some quantum invariants\\ of the Whitehead chains}
\author{Ka Ho WONG}
\date{}
\begin{document}

\maketitle

\begin{abstract}
In this paper, we study the asymptotics of the colored Jones polynomials of the Whitehead chains with one belt colored by $M_1$ and all the clasps colored by $M_2$ evaluated at the $(N+1/2)$-th root of unity $t=e^{\frac{2\pi i}{N+1/2}}$, where $M_1$ and $M_2$ are sequences of integers in $N$. By considering the limiting ratios, $s_1$ and $s_2$, of $M_1$ and $M_2$ to $(N+1/2)$, we show that the exponential growth rate of the invariants coincides with the hyperbolic volume of the link complement equipped with certain (possibly incomplete) hyperbolic structure parametrized by $s_1$ and $s_2$. In the proof we figure out the correspondence between the critical point equations of the potential functions and the hyperbolic gluing equations of certain triangulations of the link complements. Furthermore, we discover a connection between the potential function, the theory of angle structures and the covolume function. As a corollary, we prove the volume conjecture for the Turaev-Viro invariants for all Whitehead chains complements. 

\end{abstract}

\section{Introduction}

\subsection{Overview of volume conjectures}
The volume conjecture of the colored Jones polynomials, discovered by R. Kashaev, H. Murakami and J. Murakami, suggests that the $N$-th colored Jones polynomials of a hyperbolic knot evaluated at the $N$-th root of unity $t=e^{\frac{2\pi i}{N}}$ grow exponentially with exponential growth rate equal to the hyperbolic volume of the knot complement.

\begin{conjecture}\label{cvc}\cite{K97,MM01} Let $K$ be a hyperbolic knot and $J'_{N}(K;t)$ be the normalized $N$-th colored Jones polynomials of $K$ evaluated at $t$ such that $J'_N(unknot,t)=1$ for all $N\in \NN$. We have
\begin{align*}
\lim_{N \to \infty} \frac{2\pi}{N} \log|J'_{N}(K;e^{\frac{2\pi i}{N}})| = \Vol(\SS^{3} \backslash K),
\end{align*}
where $\operatorname{Vol}(\SS^{3} \backslash K)$ is the hyperbolic volume of the knot complement in $\SS^3$.
\end{conjecture}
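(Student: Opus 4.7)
The plan is to attack the conjecture via a state integral representation of the colored Jones polynomial. First I would fix a diagram of $K$ and expand $J'_N(K;t)$ as a finite state sum, using for instance the $R$-matrix of $U_q(\mathfrak{sl}_2)$ at $q=t=e^{2\pi i/N}$, or a Kauffman bracket / shadow-state-sum expansion. Such a formula writes $J'_N$ as $\sum_{\mathbf{k}} w(\mathbf{k})$ indexed by integer tuples $\mathbf{k}=(k_1,\dots,k_r)$ ranging over a bounded region, with $w(\mathbf{k})$ a signed product of quantum factorials $\{n\}! = \prod_{j=1}^n (t^{j/2}-t^{-j/2})$.

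Second, applying the standard asymptotic $\log\{n\}! = \frac{N}{2\pi i}\,\Li(e^{2\pi i n /N}) + O(\log N)$, each summand takes the form $\exp\bigl(\tfrac{N}{2\pi i}\,V(\mathbf{x}) + O(\log N)\bigr)$ with $\mathbf{x}=\mathbf{k}/N$, where the potential $V$ is a combination of dilogarithms in $e^{2\pi i x_j}$. Approximating the sum by an integral, e.g.\ via Poisson summation or a direct Riemann-sum bound, one reduces the conjecture to evaluating $\int e^{(N/2\pi i) V(\mathbf{x})}\,d\mathbf{x}$ by the saddle point method.

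The geometric content lies in the third step: one shows that the critical point system $\partial V/\partial x_j = 0$ is, after the change of variables $z_j = e^{2\pi i x_j}$, equivalent to the hyperbolic gluing equations of a concrete ideal triangulation $\mathcal{T}$ of $\SS^3\setminus K$. Because $\SS^3\setminus K$ is hyperbolic, these equations admit a distinguished \emph{geometric solution} $\mathbf{z}^\star$ corresponding to the complete structure, and by Yoshida's formula expressing hyperbolic volume as a sum of $\im\Li$ of shape parameters, one verifies $\tfrac{1}{2\pi}\im V(\mathbf{x}^\star) = \Vol(\SS^3\setminus K)$. Combined with the saddle point estimate, this yields the conjectured exponential growth rate.

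The main obstacle will be the analytic step of showing that the geometric critical point actually dominates the asymptotics. Concretely, one must deform the contour of integration to pass through $\mathbf{x}^\star$ while avoiding the branch loci of $\Li$, verify that the Hessian is nondegenerate with definite sign of its imaginary part, and control contributions from the other critical points and from the boundary of the domain. A secondary difficulty is the combinatorial one of producing a state sum whose critical equations manifestly match a nice ideal triangulation of $\SS^3\setminus K$; general hyperbolic knots admit no canonical such presentation, which is why the conjecture remains open in full generality. The approach of the present paper will be to carry out all four steps rigorously for the Whitehead chain family, where the shadow / fundamental shadow link decomposition makes the triangulation identification tractable and the saddle point analysis can be controlled uniformly in $N$.
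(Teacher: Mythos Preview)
The statement you are asked to address is labeled \emph{Conjecture} in the paper, and the paper does not prove it; it merely records the Kashaev--Murakami--Murakami volume conjecture as background. There is therefore no ``paper's own proof'' to compare against, and your submission should not be framed as a proof of this statement.

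That said, your outline is an accurate summary of the standard strategy the paper itself sketches in Section~1.2 (state-sum $\to$ potential function $\to$ saddle point $\to$ identification of critical equations with gluing equations), and you correctly flag the two genuine obstructions (contour deformation/dominance of the geometric saddle, and the combinatorial matching with a triangulation) that keep the conjecture open in general. One point to sharpen: your closing sentence says the paper carries out these steps for the Whitehead chains, but be careful that the paper does \emph{not} work at the root $t=e^{2\pi i/N}$ of Conjecture~1. It works at $t=e^{2\pi i/(N+1/2)}$ and addresses Conjectures~5 and~6 (and the Turaev--Viro Conjecture~4) rather than Conjecture~1; indeed the paper explicitly notes that the generalized version of Conjecture~1 at $e^{2\pi i/N}$ fails already for the figure-eight knot and for Whitehead chains with multiple belts. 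So while your strategic outline is sound, it is a plan for a different (and still open) theorem than the one the paper actually establishes.
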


Later, H. Murakami and J. Murakami extended the above volume conjecture to non-hyperbolic links and suggested that the exponential growth rate captures the simplicial volume of the link complement.

\begin{conjecture}\label{cvcMM}\cite{MM01} Let $L$ be a link and $J'_{\vec{N}}(L;t)$ be the $\vec{N}$-th normalized colored Jones polynomials of $L$ evaluated at $t$, where $\vec{N} = (N,\dots, N)$. We have
\begin{align*}
\lim_{N \to \infty} \frac{2\pi}{N} \log|J'_{\vec{N}}(L;e^{\frac{2\pi i}{N}})| = v_3||\SS^3 \backslash L||,
\end{align*}
where $||\SS^3 \backslash L||$ is the simplicial volume of the link complement.
\end{conjecture}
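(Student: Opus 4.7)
The plan is to realize $J'_{\vec N}(L;e^{2\pi i/N})$ as a state sum in finitely many integer summation variables (with $N$ as a large asymptotic parameter) and then pass to a contour integral amenable to the saddle-point method. Concretely, I would fix a link diagram of $L$ and expand each crossing via $R$-matrices for the irreducible $U_q(\mathfrak{sl}_2)$-representations, or equivalently use Habiro's cyclotomic expansion; after simplification each summand is a ratio of quantum factorials at $q=e^{2\pi i/N}$. Using the classical asymptotics $\log(q)_n \sim (N/2\pi i)\,\Li(e^{2\pi i n/N})$ one rewrites the summand as $\exp(N\cdot V(\vec z) + O(\log N))$ for an explicit dilogarithmic potential $V$ whose variables are the normalized summation indices.

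The second step is the saddle-point analysis: assuming one can control the contour, the asymptotics are governed by a dominant critical point $\vec z_0$ of $V$ and
\begin{equation*}
\lim_{N\to\infty}\frac{2\pi}{N}\log\bigl|J'_{\vec N}(L;e^{2\pi i/N})\bigr| = 2\pi\,\Re V(\vec z_0).
\end{equation*}
The crucial geometric input is to identify the system $\nabla V=0$ with the hyperbolic gluing equations of an ideal triangulation of $\SS^3\setminus L$: each adjacent pair of dilogarithms in $V$ encodes an ideal tetrahedron whose shape parameter is an exponentiated variable of $V$, and the partial derivative of $V$ in a given variable becomes the edge-consistency equation around the corresponding edge (with meridian holonomy trivial, i.e.\ the complete structure). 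Once this dictionary is established, a Yoshida-type argument gives $2\pi\,\Re V(\vec z_0)=\Vol(\SS^3\setminus L)=v_3\|\SS^3\setminus L\|$ for hyperbolic $L$.

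For a general, possibly non-hyperbolic, link I would exploit additivity of simplicial volume along the JSJ decomposition: $v_3\|\SS^3\setminus L\|$ is the sum of the hyperbolic volumes of the hyperbolic JSJ pieces, while Seifert-fibered pieces contribute zero. The expectation is that the dominant saddle migrates to the stratum of configuration space corresponding to the geometric decomposition, with $V$ degenerating into a sum of hyperbolic potentials, one per hyperbolic piece. In the present paper this is tractable because the Whitehead chains arise by a simple recursive construction (iterated clasps attached to a belt), so the JSJ structure and the associated triangulations admit a uniform description indexed by $s_1,s_2$, and the degeneration of $V$ can be analyzed directly.

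The principal obstacle is the saddle-point comparison itself: one must certify that the geometric critical point is \emph{dominant}, i.e.\ that no competing critical point or boundary configuration of $V$ has strictly larger real part, and that the discrete sum faithfully captures the integral with no spurious cancellations. This is the historical stumbling block for the general conjecture, which is why the paper restricts to the Whitehead chain family: there one can verify the correspondence with Thurston's gluing equations explicitly and, via the connection with the theory of angle structures and the covolume function mentioned in the abstract, prove that the geometric saddle is indeed the maximizer of $\Re V$ on the relevant domain, yielding the volume conjecture for the Turaev--Viro invariants of these complements as a corollary.
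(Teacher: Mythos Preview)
The statement you are attempting to prove is a \emph{conjecture}, quoted from \cite{MM01}; the paper does not prove it, and indeed no general proof exists. Your proposal is an outline of the standard heuristic for the volume conjecture, and you correctly identify the main obstruction (dominance of the geometric saddle), but as written it cannot succeed for the stated target. The paper itself explains why in the footnote following Conjecture~\ref{GVCHL}: at the root $t=e^{2\pi i/N}$ the conjecture is \emph{false} for split links (a normalization issue) and, more seriously, van der Veen \cite{V08} shows it fails for Whitehead chains $W_{a,b,c,d}$ with $b>1$ even after normalization is fixed. So your JSJ step --- ``the dominant saddle migrates to the stratum corresponding to the geometric decomposition'' --- is not merely hard to justify, it is wrong at $e^{2\pi i/N}$ for exactly the family the paper studies.

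What the paper actually does is work at the shifted root $t=e^{2\pi i/(N+1/2)}$, where these pathologies disappear, and prove special cases (Theorems~\ref{mainthmWL}, \ref{CJWNN}, \ref{mainthmWa1c0}) for the Whitehead chains, together with the Turaev--Viro volume conjecture for their complements (Corollary~\ref{TVW}). The architecture of those proofs matches your sketch --- potential function from quantum dilogarithms, Poisson summation, saddle point, identification of the critical equations with Thurston's gluing equations for an explicit octahedral triangulation, and a differential/covolume identity to pin down the critical value --- but the target is the $(N+\tfrac12)$-root invariants, not Conjecture~\ref{cvcMM}. If you intended your proposal as a proof of those theorems rather than of the general conjecture, then the strategy is aligned with the paper's, but you should drop the JSJ argument for non-hyperbolic $L$ (the paper handles $b>1$ only at the Turaev--Viro level, via Lemma~\ref{uppbdW} and a squeeze, not by a saddle analysis of the full potential) and be explicit that you are working at $e^{2\pi i/(N+1/2)}$.
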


In 2015, Q. Chen and T. Yang discovered a version of volume conjecture for the Turaev-Viro invariants at a primitive $2r$-th root of unity, where $r$ is odd. The conjecture can be stated as follows.

\begin{conjecture}\label{vctv}\cite{CY15}
For every hyperbolic 3-manifold $M$ with finite volume, we have
\begin{align*}
\lim_{\substack{ r\to \infty \\r \text{ odd}}}\frac{2\pi}{r}\log \left(TV_{r}(M,e^{\frac{2\pi i}{r}})\right) = \Vol(M)
\end{align*}
\end{conjecture}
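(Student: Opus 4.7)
The plan is to deduce Conjecture~\ref{vctv} for Whitehead chain complements by combining the paper's colored Jones asymptotics with the Detcherry--Kalfagianni--Yang identity
\begin{align*}
TV_r(\SS^3 \setminus L, e^{2\pi i/r}) = C_r \sum_{\vec{N}} \bigl|J'_{\vec{N}}(L, e^{4\pi i/r})\bigr|^2,
\end{align*}
valid for odd $r = 2N+1$, where the sum runs over admissible multi-colorings of the components of $L$ and $C_r$ is polynomial in $r$. Since the summands are nonnegative, the exponential growth rate of $TV_r$ is controlled by its largest term up to sub-exponential corrections, which are absorbed into the $\tfrac{2\pi}{r}\log(\cdot)$ limit.

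For the lower bound I would choose a sequence of uniform belt/clasp colorings $(M_1, M_2)$ with $M_i/(N+1/2) \to (s_1^*, s_2^*)$, where $(s_1^*, s_2^*)$ are the parameters whose gluing equations specialize to the completeness equations of $\SS^3\setminus L$; such parameters exist by the correspondence between critical points of the potential function and the hyperbolic gluing equations that the paper establishes. Applying the main asymptotic theorem on this subsequence yields
\begin{align*}
\bigl|J'_{M_1, M_2}(L, e^{4\pi i/r})\bigr|^2 \sim \exp\bigl(\tfrac{r}{\pi}\Vol(\SS^3\setminus L) + o(r)\bigr),
\end{align*}
which, after reconciling the conventions $e^{2\pi i/(N+1/2)} = (e^{2\pi i/r})^2$ used in the two statements, gives $\liminf_{r\to\infty,\, r\text{ odd}} \tfrac{2\pi}{r}\log TV_r \ge \Vol(\SS^3\setminus L)$.

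For the matching upper bound one must control every admissible coloring in the DKY sum. The key geometric input is that the volume function on the deformation variety of a cusped hyperbolic 3-manifold is maximized at the complete structure (Thurston, Neumann--Zagier); together with the paper's main theorem this forces the growth rate of each $|J'_{\vec{N}}|^2$ to be at most $\exp\bigl(\tfrac{r}{\pi}\Vol(\SS^3\setminus L) + o(r)\bigr)$. The main obstacle I foresee is that the paper's asymptotic theorem is stated only for the two-parameter family in which all clasps share a common color $M_2$; to cover the full DKY sum one must either extend the asymptotic to fully general admissible multi-colorings, or exploit the symmetry of Whitehead chains together with the concavity of the covolume function (developed in the body of the paper) to reduce the general case to the symmetric one. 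I would take the second route, as it leverages machinery already built during the proof of the main theorem and avoids a separate saddle-point analysis for the nonsymmetric colorings.
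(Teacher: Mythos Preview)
Your lower bound is essentially the paper's argument: take the single term $\vec{M}=\vec{N}$ (so $s_i=1$, the complete structure) in the DKY sum and invoke Theorem~\ref{CJWNN}. That part is fine.

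The upper bound, however, has a real gap. The asymptotic theorems in the paper (Theorems~\ref{mainthmWL} and~\ref{mainthmWa1c0}) are only proved for colorings with $s_i\in(1-\delta,1]$ for some small $\delta>0$; the saddle-point analysis, the Poisson summation estimates, and the identification of the critical value with a hyperbolic volume all require $s_i$ close to $1$. But the DKY sum in Theorem~\ref{relationship} ranges over \emph{all} $1\le M_i\le N$, including colorings with $s_i$ near $0$. For those terms you have no asymptotic formula, hence no way to invoke the Neumann--Zagier maximality of the complete structure. Your proposed fix via symmetry and concavity of the covolume function does not touch this issue: the problem is not the multiplicity of clasp colors but the range of the limiting ratios.

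The paper handles the upper bound by an entirely different and more elementary route (Lemma~\ref{uppbdW}): it bounds each summand of the colored Jones polynomial directly, before any saddle-point approximation. Since the belt and twist tangles contribute only polynomially, it suffices to bound the clasp factor $c_M(n,l;t)$. Its exponential growth rate is the explicit Lobachevsky-function expression
\[
f(x,y,s)=\tfrac{1}{\pi}\bigl[\Lambda(\pi s-\pi x-\pi y)-\Lambda(\pi s-\pi y)+\Lambda(\pi y)-\Lambda(\pi x+\pi y)+\Lambda(\pi x)\bigr],
\]
and the paper shows by a direct critical-point-and-boundary analysis that $f$ is maximized on the full region $\{0\le x+y\le s\le 1\}$ uniquely at $(x,y,s)=(\tfrac12,\tfrac14,1)$, with value $\tfrac{1}{2\pi}\Vol(\SS^3\setminus WL)$. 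This gives the uniform bound over all colorings that your approach lacks.
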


It turns out when the $3$-manifold is a link complement, the Turaev-Viro invariant is related to the unnormalized colored Jones polynomials of the link evaluated at certain $(N+1/2)$-th root of unity as follows.

\begin{theorem}\label{relationship} \cite{DKY17}.
Let L be a link in $\SS^3$ with n components. Then given an odd integer $r=2N+1 \geq 3$, we have
\begin{align*}
TV_{r}\left(\SS^3 \backslash L, e^{\frac{2\pi i}{r}}\right) = 2^{n-1}\lt( \frac{2\sin(\frac{2\pi}{r})}{\sqrt{r}}\rt)^{2} \sum_{1\leq \vec{M} \leq \frac{r-1}{2}}\left|{J}_{\vec{M}}\left(L,e^{\frac{2\pi i }{N+\frac{1}{2}}}\right)\right|^2 .
\end{align*}
Here, $J_{\vec{M}}(L,t)$ is the unnormalized colored Jones polynomials of the link $L$ with
$$J_N(U,t) = \frac{t^{\frac{N}{2}} - t^{-\frac{N}{2}}}{t^{\frac{1}{2}}-t^{-\frac{1}{2}}}$$
for the unknot $U$ and any $N \in \NN$.
\end{theorem}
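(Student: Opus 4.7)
The plan is to pass through the Reshetikhin–Turaev TQFT and apply the Turaev–Walker–Roberts identity $TV_{r}(M)=|RT_{r}(M)|^{2}$ for closed oriented $3$-manifolds, extended to manifolds with torus boundary as a sum over boundary colorings. First I would set $E(L)=\SS^{3}\setminus \nu(L)$ and observe that $RT_{r}(E(L))$ is a vector in $V(T^{2})^{\otimes n}$, where $V(T^{2})$ is the TQFT Hilbert space attached to a boundary torus, with basis $\{e_{c}\}$ indexed by admissible labels $0\le c\le (r-1)/2$. Gluing in the meridional solid tori with cores labelled by $\vec{M}$ recovers $\SS^{3}$ and identifies the coordinate $\langle RT_{r}(E(L)),e_{\vec M}\rangle$ with an explicit scalar multiple of $J_{\vec M}(L;e^{2\pi i/(N+1/2)})$ via the surgery/linking formula for the colored Jones polynomial.

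Next I would invoke the boundary-labelled squaring identity
\[
TV_{r}(E(L)) \;=\; \sum_{\vec c}\, w_{\vec c}\,\bigl|\langle RT_{r}(E(L)),\,e_{\vec c}\rangle\bigr|^{2},
\]
with weights $w_{\vec c}$ built from modular $S$-matrix entries, thereby converting the right-hand side into a weighted sum of $|J_{\vec M}|^{2}$. Finally I would bookkeep the normalizations at the specific root $t=e^{2\pi i/(N+1/2)}=q^{2}$ where $q=e^{2\pi i/r}$: the overall prefactor $\bigl(2\sin(2\pi/r)/\sqrt r\bigr)^{2}$ is the global $|S_{0,0}|^{2}$-type normalization of the Turaev–Viro state sum at odd level $r=2N+1$, while the factor $2^{n-1}$ arises from the involution $c\mapsto r-c$ on admissible labels in the SO(3) subcategory, which halves the effective summation range in each of the $n$ tensor factors but is tied across components by a single global parity constraint.

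The main obstacle is establishing the boundary-labelled squaring identity with the exact normalizations at the root $e^{2\pi i/(N+1/2)}$ rather than at the more familiar $e^{2\pi i/r}$. One clean route is to fix an ideal triangulation of $E(L)$ with truncated ideal vertices carrying the boundary colors, write $TV_{r}$ as a state sum over admissible tetrahedral labellings, and apply the product formula for quantum $6j$-symbols that expresses $|6j|^{2}$ as a pairing of two trivalent graphs. This effectively doubles the triangulation so that the resulting state sum becomes the claimed sum of $|J_{\vec M}|^{2}$ via a shadow-world computation of the colored Jones polynomial; verifying the precise powers of $\sin(2\pi/r)$ and of $2$ at the correct root of unity is the most delicate piece of the argument, and I would follow the approach of Detcherry–Kalfagianni–Yang to carry it out.
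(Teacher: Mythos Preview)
The paper does not prove this theorem; it is quoted as a result from \cite{DKY17} and used as a black box (see Section~\ref{TV}, where it is invoked without argument). So there is no proof in the paper to compare against.

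Your outline is in fact close to the strategy of \cite{DKY17}: one passes through the Reshetikhin--Turaev TQFT vector $RT_{r}(E(L))\in V(T^{2})^{\otimes n}$, uses the extension to manifolds with toroidal boundary of the identity $TV_{r}=\|RT_{r}\|^{2}$ (due to Benedetti--Petronio, building on Turaev and Roberts), and then identifies the coefficients of $RT_{r}(E(L))$ in the meridional basis with the unnormalized colored Jones polynomials. Your explanation of the prefactor $(2\sin(2\pi/r)/\sqrt{r})^{2}$ as the global normalization is correct. Your account of the $2^{n-1}$ is slightly off: in \cite{DKY17} this factor comes from the identity $|J_{\vec M}|=|J_{\vec{M'}}|$ under the componentwise involution $M_k\mapsto r-M_k$, so that the sum over all labels $1\le M_k\le r-1$ collapses to $2^{n}$ times the sum over $1\le M_k\le (r-1)/2$, with one factor of $2$ absorbed elsewhere in the normalization; there is no ``global parity constraint'' tying the components. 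Other than that, the sketch is sound, and the delicate normalization bookkeeping you flag is indeed the substance of the argument in \cite{DKY17}.
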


Therefore, by studying the asymptotics of the colored Jones polynomials of links at this $(N+1/2)$-th root of unity, one can relate the volume conjecture of the colored Jones polynomials of a link to the volume conjecture of the Turaev-Viro invariants of its complement. In particular, in \cite{DKY17}, the volume conjecture for the Turaev-Viro invariants is extended to the complement of non-hyperbolic links and this generalization is proved for all knots with zero simplicial volume. 

\begin{conjecture}\label{tvGM}\cite{DKY17}
For every link $L$ in $\SS^3$, we have
$$  \lim_{\substack{ r\to \infty \\r \text{ odd}}} \frac{2\pi }{r} \log(TV_r(\SS^3 \backslash L, e^{\frac{2\pi i}{r}})) = v_3 ||\SS^3 \backslash L||,$$
where $v_3$ is the volume of an ideal regular tetrahedron.
\end{conjecture}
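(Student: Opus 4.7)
The plan is to deduce the statement for Whitehead chain complements from the asymptotic behavior of their colored Jones polynomials that the paper establishes as its main theorem. By Theorem \ref{relationship}, writing $r=2N+1$,
$$\frac{2\pi}{r}\log TV_r(\SS^3 \backslash L, e^{2\pi i/r}) = \frac{2\pi}{r}\log C_r + \frac{2\pi}{r}\log \sum_{1 \le \vec M \le (r-1)/2} \left|J_{\vec M}\left(L, e^{2\pi i/(N+1/2)}\right)\right|^2,$$
with $C_r = 2^{n-1}(2\sin(2\pi/r)/\sqrt r)^2$ growing only polynomially and so contributing $0$ in the limit. The number of summands is $O(r^n)$, so the logarithm of the sum agrees with twice the logarithm of the largest $|J_{\vec M}|$ up to $o(r)$.

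Next, I would apply the paper's main asymptotic theorem: for any sequence $\vec M=(M_1,M_2)$ with $M_i/(N+1/2)\to s_i$, the quantity $\frac{2\pi}{N+1/2}\log|J_{\vec M}|$ converges to $\Vol(L(s_1,s_2))$, the hyperbolic volume of the link complement with the possibly incomplete structure parametrized by $(s_1,s_2)$. Using $N+1/2 = r/2$, this becomes $\frac{2\pi}{r}\log|J_{\vec M}|^2 \to \Vol(L(s_1,s_2))$, so (modulo the saddle-point issues discussed below)
$$\lim_{\substack{r\to\infty\\ r\text{ odd}}} \frac{2\pi}{r} \log TV_r(\SS^3 \backslash L, e^{2\pi i/r}) \;=\; \sup_{(s_1,s_2)\in\mathcal A} \Vol(L(s_1,s_2)),$$
where $\mathcal A$ denotes the set of admissible limiting ratios identified in the main theorem.

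The final step is to identify this supremum with $v_3\|\SS^3\backslash L\|$. Here I would use the correspondence between critical points of the potential function and solutions of the hyperbolic gluing equations, together with the connection to angle structures and the covolume function announced in the abstract. By the Casson-Rivin theorem, the covolume function on the space of admissible angle structures is strictly concave and attains its unique maximum at the angle structure solving the complete gluing equations, with maximal value equal to the hyperbolic volume of the complete manifold. Since Whitehead chain complements are hyperbolic, this maximum equals $\Vol(\SS^3 \backslash L) = v_3\|\SS^3 \backslash L\|$ by Gromov's theorem.

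The main obstacle will be passing from pointwise asymptotics at fixed $(s_1,s_2)$ to control of the entire sum. The lower bound follows by choosing $\vec M = \lfloor (N+1/2)\vec s^\ast \rfloor$ for a maximizer $\vec s^\ast$ and invoking continuity of $\Vol(L(\cdot,\cdot))$. The upper bound, however, requires a uniform-in-$\vec M$ version of the pointwise estimate: one needs uniformity of the saddle-point analysis on compact subsets of $\mathcal A$, together with control near $\partial \mathcal A$ where the geometric structure degenerates (for example, when a clasp parameter collapses to the boundary of the angle structure polytope). Establishing this uniformity, which reduces to uniform bounds on the critical points of the potential function and on the steepest-descent contours, is the technical heart of the argument.
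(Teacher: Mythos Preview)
Your outline has the right two-sided squeeze structure, but there is a genuine gap in how you obtain the upper bound, and the paper takes a simpler route that you miss.

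The asymptotic theorems in the paper (Theorems~\ref{mainthmWL} and~\ref{mainthmWa1c0}) are only proved for limiting ratios $s_i$ in a small interval $(1-\delta,1]$; they say nothing about colors whose ratios lie away from~$1$. Hence you cannot assert that $\frac{2\pi}{N+1/2}\log|J_{\vec M}|\to \Vol(L(s_1,s_2))$ for \emph{all} sequences $\vec M$, and so the passage to $\sup_{(s_1,s_2)\in\mathcal A}\Vol(L(s_1,s_2))$ is unjustified. The uniformity problem you flag is therefore not merely technical: outside the $\delta$-window you have no asymptotic formula at all to be made uniform.

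The paper sidesteps this completely. For the upper bound it does not use the saddle-point machinery; instead it proves an elementary pointwise estimate (Lemma~\ref{uppbdW}) directly on the summands of the explicit formula for $\tilde C(n,t;M)$. Writing the modulus of each product in terms of the Lobachevsky function yields a three-variable function $f(x,y,s)$ on a compact simplex, and a short calculus argument locates its global maximum at $\frac{1}{2\pi}\Vol(\SS^3\backslash WL)$, attained only at $(x,y,s)=(\tfrac12,\tfrac14,1)$. This bounds \emph{every} term of \emph{every} $|J_{\vec M}|$ simultaneously, for all colors, with no restriction on the $s_i$ and no contour deformation. The lower bound is also simpler than you propose: a single color $\vec M=\vec N$ suffices, via Theorem~\ref{CJWNN}, and no optimization over $\mathcal A$ or appeal to Casson--Rivin is needed. (Note too that for $b>1$ the chains $W_{a,b,c,d}$ are not hyperbolic, so your sentence ``Whitehead chain complements are hyperbolic'' is incorrect; only the hyperbolic piece in the JSJ decomposition contributes to $v_3\|\SS^3\backslash L\|$.)
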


Besides, in \cite{DKY17}, R. Detcherry, E. Kalfagianni and T. Yang ask the following question:
\begin{conjecture}\label{sVC}(Question 1.7 in \cite{DKY17}) Is it true that for any hyperbolic link $L$ in $\SS^3$, we have
$$ \lim_{N\to \infty} \frac{2\pi}{N+\frac{1}{2}} \log|J_{\vec{N}}(L,e^{\frac{2\pi i}{N+\frac{1}{2}}}) | = \Vol(\SS^3 \backslash L) \quad ?$$
\end{conjecture}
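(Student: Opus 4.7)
The plan is to establish this conjecture only in the restricted setting that the paper actually controls, namely when $L$ is a Whitehead chain; the statement for a general hyperbolic link is out of reach with present techniques and I will flag why at the end. Specialising the setup of the abstract to $M_1 = M_2 = N$ gives limiting ratios $s_1 = s_2 = 1$, which should correspond to the complete hyperbolic structure on $\SS^3\setminus L$. My first step is to write $J_{\vec N}\bigl(L,e^{2\pi i/(N+1/2)}\bigr)$ as an explicit multisum of products of quantum factorials (equivalently, quantum $6j$-symbols) coming from a state-sum / shadow presentation of the Whitehead chain; this is where the chain structure is crucial, because it produces a bounded number of summation variables with a uniform combinatorial shape in the number of clasps.

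The main analytic step is a saddle-point analysis of this multisum. Replacing each quantum factorial by its large-$N$ asymptotics, built from $\Li$, rewrites the sum as a Riemann sum $\sum_{\vec k}\exp\bigl((N+1/2)\,\Phi(\vec z_{\vec k})\bigr)$ for a single explicit potential $\Phi$ depending on $s_1=s_2=1$. I would then apply Poisson summation and a contour deformation in $\vec z$ so that the new integration cycle passes through a critical point $\vec z_0$ of $\Phi$; stationary phase along this cycle produces the leading behaviour $\exp\bigl((N+1/2)\,\Re\Phi(\vec z_0)\bigr)$ up to a polynomial prefactor, which is exactly the exponential rate the conjecture predicts.

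The bridge to geometry is the dictionary the paper sets up: exponentiating the critical-point equations $\partial_{z_j}\Phi = 0$ yields precisely Thurston's gluing equations for an ideal triangulation of $\SS^3\setminus L$, and the specialisation $s_1=s_2=1$ enforces the completeness condition at every cusp. Standard dilogarithm identities of Bloch--Wigner type then evaluate $\Re\Phi$ at the geometric solution to $\Vol(\SS^3\setminus L)/(2\pi)$, so once the saddle point analysis is justified the limit in the conjecture follows directly.

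The hard part is selecting the correct critical point and proving it dominates. The gluing equations have many complex solutions, and a priori $\Phi$ has no preferred extremum. Here I would lean on the connection with angle structures and the covolume function flagged in the abstract: by identifying the restriction of $\Re\Phi$ to the space of positive angle structures with the covolume, one obtains concavity and an interior maximum at the complete structure, from which it should be possible to build a deformed contour on which $\Re\Phi$ attains its global maximum at $\vec z_0$, ruling out competing saddles. Secondary but non-trivial difficulties are uniform error control on that contour near the boundary of the summation region, where quantum factorials degenerate, and extending beyond Whitehead chains --- the latter is the true reason the conjecture remains open, since for a general hyperbolic link one has neither a closed-form potential nor a natural positive angle structure on an associated triangulation to run the same argument.
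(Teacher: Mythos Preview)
Your outline matches the paper's proof of the Whitehead-chain case (Theorem~\ref{CJWNN}): start from the explicit multisum formula for $J_{\vec N}$, rewrite the summand via the quantum dilogarithm to expose a potential $\Phi$, apply Poisson summation and Ohtsuki's saddle-point method, and identify the critical value with the volume. Your final caveat is also accurate: the paper proves nothing for general hyperbolic links.

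The substantive divergence is in what you flag as ``the hard part'', selecting the dominant saddle. You propose to invoke angle structures and the covolume function to obtain concavity and rule out competing critical points. The paper does not do this. Because $s_1=s_2=1$, the relevant critical point $(\tfrac12,\tfrac14,\dots,\tfrac14)$ is \emph{real} and lies in the interior of the summation simplex; on that real simplex $\Re\Phi^{(1,1)}$ is an explicit finite combination of Lobachevsky functions, and one checks directly that this point is its unique global maximum, with value $\tfrac{1}{2\pi}(c+d)v_8$. The nondegeneracy of the Hessian is likewise computed by hand. So no angle-structure or covolume argument enters the proof of the $\vec N$ case; the differential formula (Lemma~\ref{dformula}) and its angle-structure interpretation appear in the paper as a conceptual remark used for the \emph{deformed} case $s_1,s_2<1$, not for the complete structure. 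Your route would presumably work but is heavier than what is needed. A minor further difference: the paper does not build the multisum from a state-sum or shadow presentation but quotes the closed formulas of Zheng and van der Veen, which already have exactly the shape required for the analysis.
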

The answer is affirmative when $L$ is the figure eight knot. In fact, for the figure eight knot we can say more. In \cite{WA17}, Thomas Au and the author consider the $M$-th colored Jones polynomials of the figure eight knot evaluated at the $(N+1/2)$-th root of unity $t=e^{\frac{2\pi i}{N+1/2}}$. Here we regard $M$ as a sequence in $N$ and define the limiting ratio 
$$ s = \lim_{N \to \infty} \frac{M}{N+1/2}.$$
As a direct consequence of Theorem 7 (ii) in \cite{WA17}, we have the following result.
\begin{theorem}\cite{WA17}\label{WAthm} For the figure eight knot $4_1$, there exists $\delta > 0 $ such that for any $1-\delta < s \leq 1$, we have
$$ \lim_{N \to \infty} \frac{2\pi }{N+\frac{1}{2}}\log|J_M(4_1, e^{\frac{2\pi i}{N+\frac{1}{2}}}) | =  \Vol\lt(\SS^3 \backslash 4_1, u = 2\pi i (1 - s)\rt), $$ 
where $ \Vol\lt(\SS^3 \backslash 4_1, u = 2\pi i (1 - s)\rt)$ is the volume of $\SS^3 \backslash 4_1$ equipped with the (possibly incomplete) hyperbolic structure such that the logarithm of the holonomy of the meridian is given by $u = 2\pi i (1-s)$.
\end{theorem}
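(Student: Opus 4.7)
The plan is to start from an explicit finite sum formula for the colored Jones polynomial of the figure eight knot, such as the Habiro--Kashaev formula
$$J_M(4_1,t) = \sum_{k=0}^{M-1} \prod_{j=1}^{k} \lt(t^{(M-j)/2} - t^{-(M-j)/2}\rt)\lt(t^{(M+j)/2} - t^{-(M+j)/2}\rt),$$
and, after substituting $t = e^{2\pi i/(N+1/2)}$, to rewrite each summand as $\exp\!\lt(\tfrac{N+1/2}{2\pi i}\cdot\Phi_{s,N}(k/(N+1/2))\rt)$ times a polynomial-in-$N$ prefactor. A careful Euler--Maclaurin (or Abel--Plana) expansion of the logarithm of the product shows that $\Phi_{s,N}$ converges, as $N \to \infty$ and $M/(N+1/2)\to s$, to a limit potential $\Phi_s(z)$ built from dilogarithms depending on the parameter $s$.

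Next, I would convert the discrete sum into a contour integral via Poisson summation and evaluate it by the method of steepest descent. The key structural step is to show that the critical point equation $\Phi_s'(z) = 0$ coincides, after exponentiation, with the Thurston gluing equation for the standard two--tetrahedra ideal triangulation of $\SS^3 \setminus 4_1$ equipped with the deformed hyperbolic structure whose meridian logarithmic holonomy is $u = 2\pi i(1-s)$. At $s=1$ this equation has the complete solution $z_0 = e^{i\pi/3}$; the implicit function theorem then produces a unique nearby non--degenerate critical point $z(s)$ for every $s$ with $|s-1| < \delta$ for some small $\delta > 0$.

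The final identification to make is $\im \Phi_s(z(s)) = \Vol\lt(\SS^3 \setminus 4_1, u = 2\pi i(1-s)\rt)$. This follows from the Bloch--Wigner formula expressing the hyperbolic volume of an ideal tetrahedron of shape parameter $z$ as $D(z) = \im \Li(z) + \arg(1-z)\log|z|$, together with the Neumann--Zagier formula for the volume of an incomplete Dehn surgery structure, which contributes exactly the additional linear and quadratic terms arising from the $s$--dependent coefficients in $\Phi_s$.

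The main obstacle is making the steepest descent analysis rigorous and uniform in $s$. One must verify that the integration contour can be deformed through $z(s)$ along a steepest descent direction and that along this contour no other critical point produces a dominant exponential; one must also control the tails from Poisson summation, showing that they are exponentially smaller than the saddle contribution. The restriction $1-\delta < s \leq 1$ is natural: outside this range the geometric saddle may collide with a competing critical point, with a zero or pole of the summand, or with the boundary of the domain of $\Phi_s$, invalidating the local analysis.
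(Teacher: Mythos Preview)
The paper does not actually prove this statement; it is quoted from \cite{WA17} (``As a direct consequence of Theorem 7 (ii) in \cite{WA17}''), so there is no in-paper proof to compare against directly. That said, your outline is precisely the strategy summarized in Section~\ref{strate} and carried out in full detail in this paper for the Whitehead link: express the sum via quantum dilogarithms to obtain a potential $\Phi_s$, apply Poisson summation and saddle point approximation (following \cite{O52}), identify the critical point equation with the hyperbolic gluing equation, use the implicit function theorem to get the family $z(s)$ near $s=1$, and invoke a Bloch--Wigner/Neumann--Zagier argument to match the critical value with the deformed volume. This is also the method of \cite{WA17} itself, so your proposal is correct and essentially identical in approach.

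One small remark on conventions: with your normalization $\exp\!\big(\tfrac{N+1/2}{2\pi i}\Phi_{s}\big)$ the exponential growth rate is $\tfrac{1}{2\pi}\im\Phi_s(z(s))$, whereas the paper absorbs the $\tfrac{1}{2\pi i}$ into $\Phi$ and speaks of $\Re\Phi$; these are the same quantity. Also, the ``continuity argument'' you allude to (uniformity of the saddle analysis in $s$) is exactly what the paper emphasizes at the end of Section~\ref{strate} and in Section~\ref{WLs}: all estimates needed for Poisson summation and saddle point hold on compact sets and depend continuously on $s$, so they persist for $s$ in a small interval $(1-\delta,1]$.
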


An important remark is that Theorerm~\ref{WAthm} is true for the root of unity $t=e^{\frac{2\pi i}{N+1/2}}$ but false for the original root $t=e^{\frac{2\pi i}{N}}$
\footnote{
This type of generalized volume conjecture has been studied in several literatures (e.g. \cite{GM08}, \cite{MY07}, \cite{HM13}). Unfortunately, in the original setting, i.e. $t=e^{\frac{2\pi i}{N}}$, this generalization fails even for knots (see Section 4 in \cite{C07}). Besides, the original volume conjecture is not true for split links due to the choice of normalization. Even if the normalization problem is fixed, in \cite{V08} R. van der Veen shows that the conjecture is false for the Whitehead chains with more than one belt.

On the other hand, if we consider the root $q=e^{\frac{2\pi i}{N+1/2}}$ rather than the original root, the above problems disappear and it gives a hope to have a generalized volume conjecture to hyperbolic links. Note that the difference between these two roots of unities has also been discussed in \cite{WA17}.}. As a generalization, we propose the generalized volume conjecture for hyperbolic
links at $t=e^{\frac{2\pi i}{N+1/2}}$ as follows (c.f. Question 4.6 and Theorem 4.8 in \cite{C07}).

\begin{conjecture}\label{GVCHL} (Generalized volume conjecture for hyperbolic links) Let $L$ be a hyperbolic link with $n$ components. Let $M_1, M_2, \dots , M_n$ be sequences of positive integers in $N$. Let $\ds s_i = \lim_{N\to\infty} \frac{M_i}{N+1/2}$. Then there exists $\delta_L > 0$ such that whenever the limiting ratio $s_i \in (1 - \delta_L, 1]$, we have
\begin{align*}
\lim_{N\to \infty} \frac{2\pi }{N+\frac{1}{2}} \log| J_{M_1,\dots,M_n}(L,e^{\frac{2\pi i}{N+\frac{1}{2}}})| = \Vol(\SS^3\backslash L, u_i = 2\pi i (1-s_i)),
\end{align*}
where $\Vol(\SS^3\backslash L, u_i = 2\pi i (1-s_i))$ is the volume of $\SS^3 \backslash L$ equipped with the hyperbolic structure such that the logarithm of the holonomy of the meridian around the $i$-th component is given by $u_i = 2\pi i(1 - s_i)$.
\end{conjecture}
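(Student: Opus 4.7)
The plan is to combine three ingredients — a diagrammatic state-sum representation of $J_{M_1,\ldots,M_n}(L; e^{2\pi i/(N+1/2)})$, saddle-point analysis of the resulting oscillatory integral, and Thurston's hyperbolic Dehn surgery theorem — in the same spirit as the Whitehead chain analysis of the present paper but promoted to an arbitrary hyperbolic link $L$. The conclusion we want is a statement about a single number ($\log|J_{\vec M}|/(N+1/2)$), but to get there we need to build, for every such $L$, a potential function whose critical-point geometry matches the deformation space of hyperbolic structures on $\SS^3\setminus L$ near the complete one.

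First, I would fix a planar diagram $D$ of $L$ and, via Habiro's cyclotomic expansion, the shadow state sum on the regions of $D$, or the Kashaev–Yokota integral obtained by Poisson-summing the $R$-matrix state sum, rewrite
\[
J_{M_1,\ldots,M_n}\Bigl(L,\, e^{\frac{2\pi i}{N+1/2}}\Bigr) \;\approx\; \int_{\Gamma_N} e^{\frac{N+1/2}{2\pi i}\, V_{L,D}(\mathbf{z};\, \mathbf{s})}\, d\mathbf{z},
\]
where the potential
\[
V_{L,D}(\mathbf{z};\, \mathbf{s}) \;=\; \sum_\alpha \epsilon_\alpha\,\Li\bigl(w_\alpha(\mathbf{z}, e^{2\pi i s_1}, \ldots, e^{2\pi i s_n})\bigr)
\]
is a signed sum of classical dilogarithms evaluated on Laurent monomials $w_\alpha$ in $\mathbf{z}$ and $e^{2\pi i s_i}$. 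The input that makes this clean is the uniform asymptotic expansion of the $q$-Pochhammer symbols at the root $e^{2\pi i/(N+1/2)}$ throughout the full angular range, which is precisely why the statement uses the $(N+1/2)$-th root rather than the $N$-th.

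Next comes the saddle-point analysis together with the geometric identification. At $\mathbf{s}=\mathbf{1}$, the critical-point equations $\partial V_{L,D}/\partial z_j = 0$ should be equivalent, up to multiplicative change of variable, to the hyperbolic gluing equations of an ideal triangulation $\mathcal{T}_D$ of $\SS^3\setminus L$ naturally attached to $D$ (for instance the octahedral triangulation). By Mostow rigidity the geometric solution realises the complete hyperbolic structure. For $\mathbf{s}$ close to $\mathbf{1}$, fixing the meridian logarithmic holonomies at $u_i = 2\pi i(1-s_i)$ perturbs the gluing equations, and Thurston's hyperbolic Dehn surgery theorem furnishes a unique geometric deformation $\mathbf{z}^\star(\mathbf{s})$ on a neighbourhood of $\mathbf{1}$ of some radius $\delta_L>0$. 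A Neumann–Zagier type computation then identifies
\[
-\im V_{L,D}\bigl(\mathbf{z}^\star(\mathbf{s});\, \mathbf{s}\bigr) \;=\; \Vol\bigl(\SS^3\setminus L,\, u_i = 2\pi i(1-s_i)\bigr),
\]
and deforming $\Gamma_N$ through $\mathbf{z}^\star$ along its steepest-descent direction delivers the conjectured asymptotic.

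The hard part is precisely the two geometric–analytic inputs above. Step (i), matching the critical-point equations of an explicit state-sum potential built from a generic diagram to the gluing equations of a positively oriented ideal triangulation of $\SS^3\setminus L$, is combinatorially delicate: it is currently understood only for specific families such as two-bridge knots, fundamental shadow links, and the Whitehead chains treated in this paper. Step (ii), the global contour deformation — in particular, ruling out dominant non-geometric critical points and handling the branch cuts of $\Li$ — is the central analytic difficulty present in every known instance of the volume conjecture. The restriction $s_i\in(1-\delta_L,1]$ is exactly the Dehn surgery radius of convergence; beyond it one expects critical-point collisions and geometric transitions analogous to those documented for $4_1$ in \cite{WA17}, so a different argument would be required to push the statement further.
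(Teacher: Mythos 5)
The statement you were asked to prove is Conjecture~\ref{GVCHL}, which the paper explicitly poses as an \emph{open conjecture}: no proof appears in the paper, and the paper only establishes special cases (the Whitehead link in Theorem~\ref{mainthmWL}, the Whitehead chains in Theorems~\ref{CJWNN} and~\ref{mainthmWa1c0}). Your write-up is therefore not an overlooked proof but a strategy sketch, and to your credit you say so yourself: you explicitly flag that step (i), matching the critical-point equations of a state-sum potential to the gluing equations of a positively oriented ideal triangulation, is ``currently understood only for specific families,'' and that step (ii), the global contour deformation that rules out dominant non-geometric saddles, is ``the central analytic difficulty present in every known instance of the volume conjecture.'' Those two admissions are precisely the reasons the statement remains a conjecture, so what you have written cannot be accepted as a proof.

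As a description of the \emph{intended} strategy, your outline does match the paper's method for the cases it proves: rewrite $J_{\vec M}$ at $e^{2\pi i/(N+1/2)}$ as an integral against $\exp\bigl((N+\tfrac12)\Phi\bigr)$ using the quantum dilogarithm and Poisson summation, verify that the critical-point equations of $\Phi$ reproduce hyperbolic gluing equations for a triangulation of $\SS^3\setminus L$, use a non-degenerate geometric saddle and Thurston's Dehn surgery deformation to get a holomorphic family $\mathbf{z}^{\pm(s_1,\ldots,s_n)}$, and then identify the real part of the critical value with the volume (in the paper this last identification is Lemma~\ref{dformula}, which you gesture at as ``a Neumann–Zagier type computation''). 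The one place your sketch is looser than the paper even at the strategic level is the identification of the critical value with the volume: the paper uses the Bloch–Wigner differential formula together with the vanishing of $\partial_{y_k}\Re\Phi$ at the critical point, whereas you appeal vaguely to ``$-\im V = \Vol$,'' which is false in general without such a normalization. More substantively, the paper's Theorem~\ref{mainthmWL} actually fixes a \emph{longitude} holonomy $v_2=4\pi i(1-s_2)$ on the clasp component rather than the meridian $u_2$; the conjecture as stated in terms of meridian holonomies $u_i=2\pi i(1-s_i)$ for all components is confirmed by the paper only after invoking the exchange symmetry (Corollary~\ref{WLVolsym}) and only in the sub-cases $s_1=1$ or $s_2=1$. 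Your sketch silently assumes the clean meridian-holonomy picture throughout, which is another gap if one tries to follow the Whitehead chain template verbatim. So: correct as an outline of a research program, not a proof, and be careful about which cusp parameter the potential actually controls.
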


\subsection{Strategy to prove the volume conjecture}\label{strate}
In this subsection, we briefly discuss our approach to study the volume conjecture. A standard way to prove volume conjecture involves three steps. Roughly speaking, first of all, given the explicit formula of the colored Jones polynomials of a link $L$ evaluated at the $(N+1/2)$-th root of unity, we convert it into an integral of the form 
$$\int_D f(z_1,z_2,\dots,z_n)\exp\lt( \lt(N+\frac{1}{2}\rt) \Phi_L(z_1,z_2,\dots,z_n) \rt) dz_1dz_2 \dots dz_n, $$
where $(z_1,z_2,\dots,z_n)\in \CC^n$ for some $n$, $f(z_1,z_2,\dots,z_n)$ is a holomorphic function and $\Phi_L(z_1,z_2,\dots,z_n)$ is called the potential function of the link $L$. After that, we apply the saddle point approximation, which says that the large $N$ behavior of the integral is determined by the critical value of the potential function at certain non-degenerate critical point. 

The last step is to show that the real part of the critical value gives us the hyperbolic volume of the link complement. Recall that in order to find the critical point of the potential function, we need to solve the critical point equations 
$$\frac{\partial}{\partial z_i}\Phi_L(z_1,z_2,\dots,z_n)=0 \quad\text{ for $i=1,2,\dots, n$}$$
It turns out that in many situations, the critical point equations of the potential function coincide with the hyperbolic gluing equations (e.g. edges equations, surgery equations) of certain triangulation of the link complement with shapes parameters parametrized by $z_1,z_2,\dots,z_n$. Finally, we need to argue that the critical value of the potential function gives the hyperbolic volume of the link complement.

This approach has been used to study the volume conjecture of the colored Jones polynomials for the figure eight knot \cite{HM13}, the $5_2$ knot \cite{O52}, and knots with 6 or 7 crossings \cite{O6,O7}. Moreover, the correspondence between the critical point equations of the potential function to the edges equations of certain triangulation has been studied for the twist knots \cite{CMY09} and two bridge knots \cite{KO05}.

Once the original volume conjecture is proved in this way, the generalized volume conjecture can be proved by the `continuity argument'. The continuity argument has been used in \cite{WA17}. The key point is that all the technical assumptions in the above analysis depend continuously on the potential function $\Phi_L(z_1,z_2,\dots,z_n)$. In particular, if we change the colors $M_i$, we obtain a family of potential functions which depends smoothly on the parameters $s_i$. Therefore, as long as $s_i$'s are sufficiently close to $1$, the above analysis works and the result follows.

\begin{remark}
It turns out that for the colored Jones polynomials of the Whitehead link and the Whitehead chains, we obtain two potential functions, and both of them give the hyperbolic volume of the link complement at certain critical points. See Section~\ref{formula} for more details.
\end{remark}

\subsection{Main results}

The main goal of this paper is to study the asymptotics of the colored Jones polynomials of the Whitehead chains defined by R. van der Veen in \cite{V08}.

For any $a\in \ZZ$, $b\in \NN$, $c,d\in\NN\cup\{0\}$ with $c+d\geq 1$, the Whitehead chain $W_{a,b,c,d}$ is obtained by stacking $a$ full twists, $b$ belts, $c$ clasps and $d$ mirror clasps and then taking the closure (Figure~\ref{fullpicture}). All the Whitehead chains $W_{a,1,c,d}$ are hyperbolic with
$$ \Vol(\SS^3\backslash W_{a,1,c,d}) = (c+d) v_8 ,$$
where $v_8$ is the volume of an ideal regular octahedron.

For $b>1$, the Whitehead chains $W_{a,b,c,d}$ is not hyperbolic but it contains a hypebolic piece in the JSJ decomposition with 
$$ v_3 ||\SS^3\backslash W_{a,b,c,d}|| = (c+d) v_8 $$
          \begin{figure}[H]
          \centering
              \includegraphics[width=0.8\linewidth]{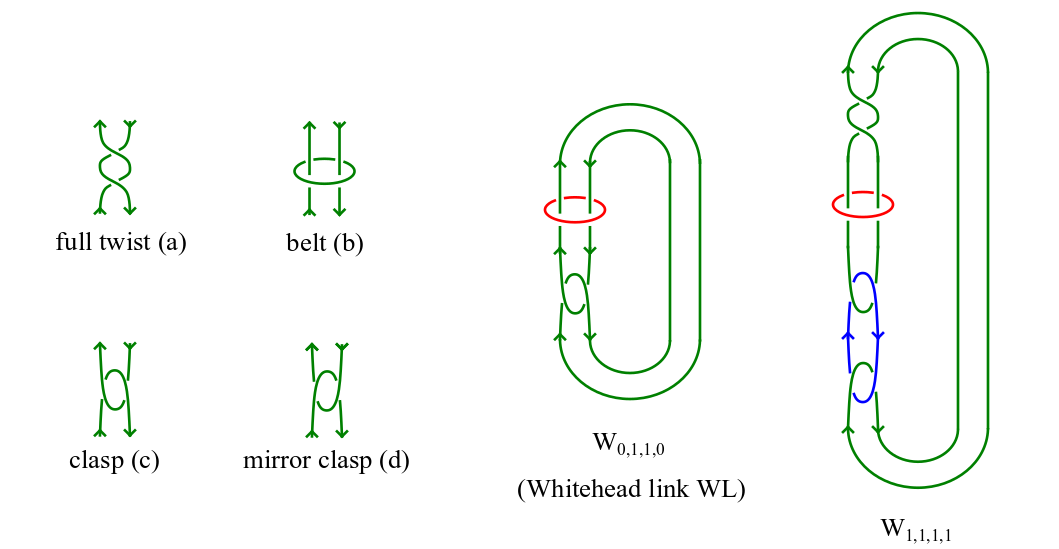}
              \caption{Left: the positive full twist, belt, clasp and mirror clasp tangles, when $a<0$, we twist the tangles in the opposite direction. Right: the links $W_{0,1,1,0}$ (also denoted as $WL$) and $W_{1,1,1,1}$}\label{fullpicture}
          \end{figure}

\subsubsection{Asymptotics of the quantum invariants of the Whitehead link}
Let $M_1$ and $M_2$ be sequences of integers in $N$ with limiting ratios
$$ s_i = \lim_{N \to \infty}\frac{M_i}{N+\frac{1}{2}}$$
We use $WL$ to denote the Whitehead link and $J_{M_1,M_2}(WL,t)$ to denote the colored Jones polynomials of the Whitehead link with the belt colored by $M_1$ and the clasp colored by $M_2$. In the first part of this paper, we study  the asymptotic expansion formula for $J_{M_1,M_2}(WL,e^{\frac{2\pi i}{N+\frac{1}{2}}})$. Our result can be stated as follows

\begin{theorem}\label{mainthmWL} For the $(M_1,M_2)$-th colored Jones polynomials of the Whitehead link evaluated at $t=e^{\frac{2\pi i}{N+\frac{1}{2}}}$,
\begin{enumerate}
\item there exist two functions $\Phi^{\pm(s_1,s_2)}(z_1,z_2)$, a triangulation of $\SS^3 \backslash WL$ and an assignment of shape parameters such that
\begin{enumerate}
\item the critical point equations 
$$\frac{\partial}{\partial z_1}\Phi^{\pm(s_1,s_2)}(z_1,z_2)
=\frac{\partial}{\partial z_2}\Phi^{\pm(s_1,s_2)}(z_1,z_2)
=0$$
coincide with the hyperbolic gluing equations for this particular triangulation.
\item there exist two families of critical points $\mathbf{z}^{\pm(s_1, s_2)}$ which holomorphically depend on $s_1,s_2$ such that 
$$  \Phi^{+(s_1,s_2)}(\mathbf{z}^{+(z_1,z_2)})
= \Phi^{-(s_1,s_2)}(\mathbf{z}^{-(z_1,z_2)}) $$
and
$$ \Re \Phi^{\pm(s_1, s_2)}(\mathbf{z}^{\pm(s_1, s_2)}) = \Vol\lt(\SS^3 \backslash WL, u_1 = 2\pi i (1 - s_1), v_2=4\pi i (1 - s_2)\rt) ,$$
where $\Vol\lt(\SS^3 \backslash WL, u_1 = 2\pi i (1 - s_1), v_2=4\pi i (1 - s_2)\rt)  $ is the hyperbolic volume of the Whitehead link complement equipped with the incomplete hyperbolic structure such that the logarithm of the holonomy of the meridian of the belt and the logarithm of the holonomy of the longitude of the clasp are $u_1 = 2\pi i (1 - s_1)$ and $v_2=4\pi i (1 - s_2)$ respectively.
\end{enumerate}
\item We have
\begin{align*}
\lim_{N\to \infty} \frac{2\pi}{N+\frac{1}{2}} \log \lt| J_{M_1,M_2}(WL,e^{\frac{2\pi i}{N+\frac{1}{2}}}) \rt| 
&= \Re \Phi^{\pm(s_1, s_2)}(\mathbf{z}^{\pm(s_1, s_2)}) \\
&= \Vol\lt(\SS^3 \backslash WL, u_1 = 2\pi i (1 - s_1), v_2=4\pi i (1 - s_2)\rt)
\end{align*}
\end{enumerate}
\end{theorem}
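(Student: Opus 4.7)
The plan is to follow the three-step strategy of Section~\ref{strate} applied to the Whitehead link. First, I would start from the known double-sum formula for $J_{M_1,M_2}(WL,t)$ (a special case of van der Veen's formula for Whitehead chains in \cite{V08}), specialise to $t = e^{2\pi i/(N+1/2)}$, and apply the standard Euler--Maclaurin/quantum dilogarithm asymptotics to convert each quantum factorial into an exponential with dilogarithmic exponent. Rewriting the double sum as an integral via Poisson summation and keeping only the Fourier modes that contribute to the leading asymptotics produces two potential functions $\Phi^{\pm(s_1,s_2)}(z_1,z_2)$ built out of $\Li$; the $\pm$ arises from two a-priori possible branch choices for a logarithm appearing in the clasp variable, each corresponding to a different choice of Fourier index.

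Next, I would identify the triangulation and match the critical point equations with the hyperbolic gluing equations. The complement $\SS^3 \backslash WL$ admits a standard decomposition into an ideal octahedron, hence into four ideal tetrahedra whose shape parameters can be written explicitly in terms of $z_1, z_2, s_1, s_2$. Differentiating $\Phi^{\pm(s_1,s_2)}$ and exponentiating yields polynomial relations among $e^{z_i}$; one then checks by direct computation that these coincide with the edge gluing equations together with the generalised Dehn-surgery equations $u_1 = 2\pi i(1-s_1)$ on the belt meridian and $v_2 = 4\pi i(1-s_2)$ on the clasp longitude, proving part (1)(a). At $(s_1,s_2) = (1,1)$ the critical points correspond to the complete hyperbolic structure; the Jacobian of the gluing equations is non-singular there by Thurston's rigidity, so the implicit function theorem produces the two holomorphic families $\mathbf z^{\pm(s_1,s_2)}$ on a neighbourhood of $(1,1)$. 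The equality $\Re\Phi^{\pm}(\mathbf z^{\pm}) = \Vol(\SS^3 \backslash WL, u_1, v_2)$ will then follow from recognising $\Phi^{\pm}$ as Neumann--Zagier potentials for this triangulation: at the complete structure this is classical, and the derivative of $\Re\Phi^{\pm}(\mathbf z^{\pm})$ with respect to $s_i$ reduces (via the critical equations) to the Neumann--Zagier variation formula $d\Vol = -\tfrac{1}{2}\im(v\,du)$. The identity $\Phi^{+}(\mathbf z^{+}) = \Phi^{-}(\mathbf z^{-})$ follows from the fact that the two branches are exchanged by complex conjugation combined with standard $\Li$ identities.

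Part (2) then reduces to a rigorous saddle point estimate applied to the integral from the first step. The main obstacle will be the analytic control needed to justify the saddle point approximation: one must deform the original real-torus contour to a steepest-descent cycle through $\mathbf z^{\pm(s_1,s_2)}$ while avoiding poles of the $\Li$ integrand, and verify the non-degeneracy $\det\Hess\Phi^{\pm}(\mathbf z^{\pm}) \neq 0$. Both checks are delicate and depend on the explicit geometry of the level sets of $\Re\Phi^{\pm}$; I expect to establish them first at $(s_1,s_2) = (1,1)$, where the problem becomes computable, and then to extend them to $s_i \in (1 - \delta_{WL}, 1]$ by the continuity argument of \cite{WA17}, producing the constant $\delta_{WL}$ and concluding the proof.
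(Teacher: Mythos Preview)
Your outline is broadly aligned with the paper's proof, but a few points differ in ways worth noting.

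First, two small factual corrections. The $\pm$ in $\Phi^{\pm(s_1,s_2)}$ does not come from different Fourier indices in the Poisson summation; it appears already at the level of the sum, from splitting the factor $t^{M_1(2n+1)/2}-t^{-M_1(2n+1)/2}$ into two exponentials (equation~(\ref{t1})). Also, the paper's triangulation decomposes the ideal octahedron into \emph{five} ideal tetrahedra (Figure~\ref{f8}), not four; the shape-parameter bookkeeping in Section~\ref{geoWL} depends on this specific choice.

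More substantively, your route to $\Re\Phi^{\pm}(\mathbf z^{\pm})=\Vol$ differs from the paper's. You propose recognising $\Phi^{\pm}$ as a Neumann--Zagier potential and invoking the variation formula $d\Vol=-\tfrac12\im(v\,du)$. The paper instead proves an elementary pointwise identity (Lemma~\ref{dformula}),
\[
\Re\Phi^{\pm(s_1,s_2)}(z_1,z_2)=\frac{1}{2\pi}V^{\pm(s_1,s_2)}(z_1,z_2)+\sum_{k=1}^2 y_k\,\frac{\partial}{\partial y_k}\Re\Phi^{\pm(s_1,s_2)}(z_1,z_2),
\]
where $V^{\pm}$ is a signed sum of Bloch--Wigner dilogarithms. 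At a critical point the $y_k$-derivative terms vanish, so $\Re\Phi^{\pm}(\mathbf z^{\pm})$ is \emph{directly} the sum of tetrahedron volumes for the triangulation whose gluing equations were just matched. This avoids appealing to the global Neumann--Zagier theory and gives the result by a short dilogarithm computation. Your approach should also work, but it is less self-contained.

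Finally, your justification of $\Phi^{+}(\mathbf z^{+})=\Phi^{-}(\mathbf z^{-})$ via ``complex conjugation and $\Li$ identities'' is too vague and likely not how the symmetry actually manifests (the parameters $s_1,s_2$ are real, so conjugation does not swap the branches). The paper argues as follows: the differential formula gives $\Re\Phi^{\pm}(\mathbf z^{\pm})=\Vol(\cdot,u_1=\pm 2\pi i(1-s_1),v_2)$, and by \cite{NZ85} the volume is even in $u_1$, so the two real parts coincide. Then for fixed $s_2$ the functions $N^{\pm}(s_1)=\Phi^{\pm}(\mathbf z^{\pm})$ are holomorphic in $s_1$ with equal real parts and $N^{+}(1)=N^{-}(1)$, forcing $N^{+}\equiv N^{-}$. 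You should replace your argument by this one, or by something equally explicit. The non-degeneracy of the Hessian at $(1,1)$ is also checked by direct computation in the paper rather than by an appeal to rigidity; the latter would require an additional argument linking infinitesimal rigidity of the hyperbolic structure to nonvanishing of $\det\Hess\Phi$.
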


As a corollary, by exchanging the components of the Whitehead link, we obtain the following symmetry for the volume function of the Whitehead link complement.

\begin{corollary}\label{WLVolsym}
We have
\begin{align}\label{Volsym1}
\Vol(\SS^3\backslash WL, u_1 = 2\pi i(1-s_1), v_2= 4\pi i(1-s_2))
= \Vol(\SS^3\backslash WL, v_1= 4\pi i(1-s_1), u_2 = 2\pi i(1-s_2))
\end{align}
In particular,
\begin{align}\label{Volsym2}
\Vol(\SS^3\backslash WL, u_1 = 2\pi i(1-s_1), u_2=0)
= \Vol(\SS^3\backslash WL, v_1= 4\pi i(1-s_1), u_2 = 0)
\end{align}
\end{corollary}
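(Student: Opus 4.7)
The plan is to derive (\ref{Volsym1}) by applying Theorem~\ref{mainthmWL}(2) to the Whitehead link in two different ways, both of which compute the same exponential growth rate.

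Because $WL$ admits an involution $\phi$ of $\SS^{3}$ exchanging its two components, the colored link in which component $1$ is the belt colored $M_{1}$ and component $2$ is the clasp colored $M_{2}$ is isotopic, via $\phi$, to its redrawing in which component $2$ now plays the role of the belt (still carrying its original color $M_{2}$) and component $1$ plays the role of the clasp (still carrying color $M_{1}$). Applying Theorem~\ref{mainthmWL}(2) to the original description yields
$$\lim_{N\to\infty}\frac{2\pi}{N+1/2}\log\bigl|J_{M_{1},M_{2}}(WL,t)\bigr|=\Vol\bigl(\SS^{3}\backslash WL,\,u_{1}=2\pi i(1-s_{1}),\,v_{2}=4\pi i(1-s_{2})\bigr).$$
Applying the same theorem to the swapped description --- where the ``belt'' is now component $2$ with color $M_{2}$ (so its meridian constraint is $u_{2}=2\pi i(1-s_{2})$) and the ``clasp'' is component $1$ with color $M_{1}$ (so its longitude constraint is $v_{1}=4\pi i(1-s_{1})$) --- gives the same growth rate, equal to
$$\Vol\bigl(\SS^{3}\backslash WL,\,u_{2}=2\pi i(1-s_{2}),\,v_{1}=4\pi i(1-s_{1})\bigr).$$
Equating the two expressions yields (\ref{Volsym1}).

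For (\ref{Volsym2}), I would specialize to $s_{2}=1$ in (\ref{Volsym1}). The left-hand constraint $v_{2}=4\pi i(1-s_{2})$ becomes $v_{2}=0$; by the Neumann--Zagier parametrization of the deformation space near the complete structure on cusp $2$, $v_{2}$ is a holomorphic function of $u_{2}$ with $v_{2}(0)=0$ and nonzero derivative, so locally $v_{2}=0$ is equivalent to $u_{2}=0$. The right-hand side of (\ref{Volsym1}) already carries $u_{2}=2\pi i(1-s_{2})=0$, so (\ref{Volsym2}) follows.

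The main subtlety is bookkeeping: one must carefully track how the asymmetric factors $2\pi i$ (attached to meridians of belts) and $4\pi i$ (attached to longitudes of clasps) transport under the swap of belt/clasp roles induced by $\phi$. Under this swap, colors stay attached to their physical components, so $s_{2}$ --- originally the clasp-color ratio weighted by $4\pi i$ --- becomes the belt-color ratio of the swapped diagram and is now weighted by $2\pi i$, and symmetrically for $s_{1}$. This is what gives (\ref{Volsym1}) the non-obvious form of a meridian/longitude exchange rather than a naive $s_{1}\leftrightarrow s_{2}$ swap of arguments.
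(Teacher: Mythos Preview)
Your proposal is correct and follows essentially the same approach as the paper: apply Theorem~\ref{mainthmWL}(2) twice, once to each presentation of $WL$ related by the component-exchanging involution, equate the resulting growth rates to obtain (\ref{Volsym1}), and then set $s_{2}=1$ together with the equivalence $u_{2}=0\Leftrightarrow v_{2}=0$ near the complete structure to deduce (\ref{Volsym2}). Your version is in fact a bit more explicit than the paper's about the bookkeeping of which constraint ($u$ versus $v$, $2\pi i$ versus $4\pi i$) lands on which physical component after the swap, and about why $v_{2}=0$ forces $u_{2}=0$ (you invoke Neumann--Zagier, whereas the paper simply asserts the equivalence).
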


Theorem~\ref{mainthmWL} and Corollary~\ref{WLVolsym} give a positive answer to Conjecture~\ref{GVCHL} for the sequences of colors $M_1,M_2$ with either $s_1=1$ or $s_2=1$. In the general case, it is not clear to the author whether Theorem~\ref{mainthmWL} could give a positive (or negative) answer to Conjecture~\ref{GVCHL}. 

\subsubsection{Asymptotic of the quantum invariants of the Whitehead chains}
In the second part of this paper, we generalize the previous results on the Whitehead link to the Whitehead chains. 

\begin{theorem}\label{CJWNN} 
We have
\begin{align*}
\lim_{N\to \infty} \frac{2\pi}{N+\frac{1}{2}} \log| J_{\vec{N}} (W_{a,1,c,d}, e^{\frac{2\pi i}{N+\frac{1}{2}}})| 
= \Vol(\SS^3 \backslash W_{a,1,c,d})
\end{align*}
\end{theorem}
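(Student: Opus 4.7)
The plan is to execute the three-step strategy outlined in Section~\ref{strate}, specialised to the case $s_1=\cdots=s_{c+d+1}=1$, so that the continuity argument of Section~\ref{strate} is not needed. First, I would start from R. van der Veen's state-sum formula in~\cite{V08} for $J_{\vec N}(W_{a,1,c,d},t)$, which expresses this colored Jones polynomial as a multidimensional sum of quantum integers and $q$-binomials with one summation index per clasp, mirror clasp and belt. Evaluating at $t=e^{\frac{2\pi i}{N+\frac{1}{2}}}$ and converting the sum to an integral by the Poisson summation argument used in~\cite{WA17} and in Section~\ref{formula} of the present paper, I would rewrite the invariant in the standard form
\[
\int_D f(z_1,\dots,z_{c+d+1})\exp\!\lt(\lt(N+\tfrac{1}{2}\rt)\Phi_{a,c,d}(z_1,\dots,z_{c+d+1})\rt)dz_1\cdots dz_{c+d+1},
\]
where $\Phi_{a,c,d}$ is a dilogarithm potential built out of $c$ copies of the clasp potential $\Phi^{+}$ from Theorem~\ref{mainthmWL} and $d$ copies of the mirror clasp potential $\Phi^{-}$, coupled through the single belt variable by a quadratic term whose coefficient records the number $a$ of full twists.

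Second, I would match the critical point equations of $\Phi_{a,c,d}$ with the hyperbolic gluing equations of a triangulation of $\SS^3\setminus W_{a,1,c,d}$ obtained by taking the union of the clasp and mirror clasp triangulations from Theorem~\ref{mainthmWL} and gluing them along the belt. Because every $s_i=1$, the cusp equation on the belt is automatically satisfied regardless of $a$, and at the corresponding complete structure each clasp (resp.\ mirror clasp) is assigned the shape parameters of the regular ideal octahedron. Substituting this critical point $\mathbf z^{\ast}$ into $\Phi_{a,c,d}$ gives, by the single-clasp identity $\Re\,\Phi^{\pm}(\mathbf z^{\pm(1,1)})=v_8$ contained in Theorem~\ref{mainthmWL},
\[
\Re\,\Phi_{a,c,d}(\mathbf z^{\ast})=c\,v_8+d\,v_8=(c+d)v_8=\Vol(\SS^3\setminus W_{a,1,c,d}).
\]
Saddle point approximation applied to the integral then converts this critical value into the claimed exponential growth rate.

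The main obstacle lies in verifying the analytical hypotheses of the saddle point method for this higher-dimensional integral: non-degeneracy of the Hessian $\Hess\,\Phi_{a,c,d}$ at $\mathbf z^{\ast}$, and the existence of a contour deformation of $D$ through $\mathbf z^{\ast}$ on which $\Re\,\Phi_{a,c,d}$ attains its maximum precisely at $\mathbf z^{\ast}$ and is uniformly dominated elsewhere. The stacking structure of $W_{a,1,c,d}$ suggests that the Hessian has a block form with $c+d$ diagonal blocks coming from the individual clasps, coupled only through the row and column corresponding to the belt variable, so non-degeneracy should reduce to the Whitehead link case already handled in Theorem~\ref{mainthmWL} plus a rank-one correction computable explicitly in $a$. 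Controlling the contour deformation is the more delicate step, since the original summation region is a product of simplices and one must choose branches of the various logarithms and dilogarithms carefully so that the deformed contour avoids the poles of $f$; I expect this to be handled by the same concavity estimates on $\Re\,\Phi^{\pm}$ used for the Whitehead link, transported block-by-block to each clasp.
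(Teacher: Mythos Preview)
Your proposal is correct and follows essentially the same route as the paper: the paper builds the potential $\Phi^{(1,1);a,c,d}$ as a sum of clasp pieces $\psi_i$, mirror clasp pieces $\kappa_i$, and the quadratic twist term $a(2\pi i(z_1-\tfrac12))^2$, verifies directly that $(\tfrac12,\tfrac14,\dots,\tfrac14)$ is the critical point and the unique real maximum on $D$ (so the contour control reduces to the Whitehead link analysis of Section~\ref{WLNN}), and disposes of the non-degeneracy obstacle by writing the Hessian explicitly as the arrowhead matrix $2\pi i\begin{psmallmatrix}(c+d)i-\frac{c-d}{2}+2a & i &\cdots& i\\ i&2i&&\\ \vdots&&\ddots&\\ i&&&2i\end{psmallmatrix}$ and evaluating its determinant via the identity $\det\begin{psmallmatrix}a_1&a_2&\cdots\\a_2&a_3&\\ \vdots&&\ddots\end{psmallmatrix}=a_3^{c+d-1}(a_1a_3-(c+d)a_2^2)$. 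One small slip: the clasp and mirror clasp building blocks are not the functions $\Phi^{+}$ and $\Phi^{-}$ of Theorem~\ref{mainthmWL} (those superscripts refer to the sign in the belt factor, and both reduce to the same function at $s_1=s_2=1$); the paper denotes the clasp and mirror clasp potentials by $\psi_i$ and $\kappa_i$, which differ by complex conjugation of the dilogarithm arguments.
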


As a consequence, we can show that
\begin{corollary}\label{TVW}For any $a\in \ZZ$, $b\geq 1$, $c,d \in \NN$ with $c+d\geq 1$, 
Conjectures~\ref{tvGM} is true for $TV_r(\SS^3\backslash W_{a,b,c,d})$.
\end{corollary}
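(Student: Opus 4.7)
The plan is to sandwich $\frac{2\pi}{r}\log TV_r(\SS^3 \backslash W_{a,b,c,d}, e^{2\pi i/r})$ between the value $(c+d)v_8$ from both sides. Since the paper states that $v_3||\SS^3\backslash W_{a,b,c,d}|| = (c+d)v_8$ whenever $b\geq 1$ and $c+d\geq 1$, this is precisely what Conjecture~\ref{tvGM} requires.

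For the upper bound, I invoke the general inequality (due to Detcherry--Kalfagianni) that
$$\limsup_{\substack{r\to\infty \\ r\text{ odd}}} \frac{2\pi}{r} \log TV_r(\SS^3\backslash L, e^{2\pi i/r}) \leq v_3||\SS^3\backslash L||$$
for every link $L\subset\SS^3$. Specializing to $L = W_{a,b,c,d}$ immediately yields the upper bound $(c+d)v_8$.

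For the lower bound, I apply Theorem~\ref{relationship}: because every summand $|J_{\vec M}|^2$ is non-negative, keeping a single term gives
$$TV_r(\SS^3 \backslash W_{a,b,c,d}, e^{2\pi i/r}) \geq 2^{n-1}\lt(\frac{2\sin(2\pi/r)}{\sqrt{r}}\rt)^{2} \lt|J_{\vec{M}^\ast}(W_{a,b,c,d}, e^{2\pi i/(N+1/2)})\rt|^2$$
for any choice $\vec{M}^\ast$ with $1\leq M^\ast_i\leq (r-1)/2$. When $b=1$, I take $\vec{M}^\ast = \vec N = (N,N,\dots,N)$ and invoke Theorem~\ref{CJWNN} to conclude that $\liminf \frac{2\pi}{r}\log TV_r \geq \Vol(\SS^3\backslash W_{a,1,c,d}) = (c+d)v_8$. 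When $b>1$, I set the colors on $b-1$ of the belt components equal to $1$ (the trivial representation) and the remaining $M^\ast_i$ equal to $N$; since a component coloured by the trivial representation can be deleted from the diagram without changing the invariant, this reduces the colored Jones polynomial of $W_{a,b,c,d}$ to that of the sublink $L'$ obtained by removing those $b-1$ belts. Identifying $L'$ with (a diagram of) $W_{a,1,c,d}$ in $\SS^3$, Theorem~\ref{CJWNN} once again produces the lower bound $(c+d)v_8$.

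The main obstacle will be the topological identification in the case $b>1$: one must verify, from the explicit construction of the Whitehead chains in \cite{V08}, that removing $b-1$ of the belt components from $W_{a,b,c,d}$ produces a link whose complement is homeomorphic to $\SS^3 \backslash W_{a,1,c,d}$ (or at least one whose colored Jones polynomial obeys the same asymptotic formula supplied by Theorem~\ref{CJWNN}). Once that reduction is in place, the matching upper and lower bounds coincide at $(c+d)v_8 = v_3||\SS^3\backslash W_{a,b,c,d}||$, completing the proof of Corollary~\ref{TVW}.
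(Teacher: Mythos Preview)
Your lower bound is fine and essentially matches the paper's: both drop all but one term from the sum in Theorem~\ref{relationship} and invoke Theorem~\ref{CJWNN}. Your handling of the case $b>1$ by colouring $b-1$ belts with $1$ and deleting them is a legitimate alternative to what the paper does implicitly (namely, observe that each extra belt contributes a factor bounded by $2$, so passing from $b$ belts to one belt changes $|J_{\vec N}|$ only by a polynomially bounded factor); the identification of the resulting sublink with $W_{a,1,c,d}$ is immediate from the stacking description of the Whitehead chains.

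The upper bound, however, has a genuine gap. The sharp inequality
\[
\limsup_{\substack{r\to\infty\\ r\text{ odd}}}\frac{2\pi}{r}\log TV_r(\SS^3\backslash L,e^{2\pi i/r})\;\leq\; v_3\,\|\SS^3\backslash L\|
\]
is \emph{not} a theorem for arbitrary links; it is precisely the upper half of Conjecture~\ref{tvGM}. The Detcherry--Kalfagianni bounds in the literature carry a multiplicative constant strictly larger than $1$, so invoking them does not give what you need. The paper does not appeal to any such general statement: it proves the upper bound directly for this family via Lemma~\ref{uppbdW}. Using the explicit formula for $J_{\vec M}(W_{a,b,c,d},t)$, the belt and twist factors are seen to grow at most polynomially, while each clasp factor $\tilde C(n,t;M)$ is bounded termwise by quantities whose exponential growth rate is controlled by the Lobachevsky-function expression
\[
f(x,y,s)=\tfrac{1}{\pi}\bigl[\Lambda(\pi s-\pi x-\pi y)-\Lambda(\pi s-\pi y)+\Lambda(\pi y)-\Lambda(\pi x+\pi y)+\Lambda(\pi x)\bigr],
\]
maximised over the simplex $\{0\le x,y,\ x+y\le s,\ 0\le s\le 1\}$. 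A short critical-point/boundary analysis shows $\max f = v_8/(2\pi)$, attained only at $(x,y,s)=(\tfrac12,\tfrac14,1)$, and this yields $\limsup\frac{2\pi}{r}\log TV_r\le (c+d)v_8$. That direct estimate is the missing ingredient in your argument.
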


Moreover, if we colored the belt by $M_1$ and all the clasps by $M_2$ (see Figure~\ref{W0120} for an example), we have the following result.

\begin{theorem}\label{mainthmWa1c0} 
There exists some $\delta>0$ such that for any $1-\delta < s_1,s_2 < 1$, we have
\begin{align*}
&\quad\lim_{N\to \infty} \frac{2\pi}{N+\frac{1}{2}} \log| J_{M_1, M_2} (W_{0,1,c,0}, e^{\frac{2\pi i}{N+\frac{1}{2}}})| \\
&= \Vol(\SS^3 \backslash W_{0,1,c,0} , u_1=2\pi i (1-s_1), v_2 = v_3 = \dots v_{c+1} = 4\pi i (1 - s_2))
\end{align*}
\end{theorem}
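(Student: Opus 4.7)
The plan is to adapt the three-step strategy of Section~\ref{strate} and the Whitehead link analysis of Theorem~\ref{mainthmWL} to the chain $W_{0,1,c,0}$. First, using the standard skein-theoretic formulas for the belt and clasp building blocks (the same ones underlying the proof of Theorem~\ref{mainthmWL}), write
\[
J_{M_1,M_2}\!\left(W_{0,1,c,0}, e^{\frac{2\pi i}{N+1/2}}\right) = \sum_{k_1,\dots,k_c,\ell} F_{N,M_1,M_2}(k_1,\dots,k_c,\ell),
\]
where the factor $F_{N,M_1,M_2}$ is a product over the $c$ clasps of the same quotient of quantum factorials that appears for the single Whitehead clasp, coupled through the belt-fusion variable $\ell$. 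Applying Poisson summation and approximating $q$-Pochhammer symbols by the Faddeev quantum dilogarithm, rewrite the sum as an integral
\[
\int_{D}\; f(z_1,\dots,z_{c+1})\,\exp\!\left(\left(N+\tfrac{1}{2}\right)\Phi^{\pm(s_1,s_2)}_{c}(z_1,\dots,z_{c+1})\right)dz_1\cdots dz_{c+1},
\]
with a potential that decomposes as $c$ copies of the Whitehead-link potential $\Phi^{\pm(s_1,s_2)}_{WL}$ glued along a common belt variable.

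Next, exploit the $S_c$-symmetry permuting the clasps. Looking for critical points in the symmetric subspace (all clasp variables equal) reduces the critical-point system to the Whitehead-link system, which by Theorem~\ref{mainthmWL} has critical points corresponding to the hyperbolic gluing equations on the standard octahedral decomposition of $\SS^3\setminus W_{0,1,c,0}$: one ideal octahedron per clasp, all of them carrying identical shape parameters, with meridian holonomy $u_1 = 2\pi i(1-s_1)$ along the belt and longitudinal holonomy $v_j = 4\pi i(1-s_2)$ along each clasp. Because the potential is additive across clasps at such a symmetric point, the critical value equals $c$ times the Whitehead-link critical value at $(s_1,s_2)$, so by Theorem~\ref{mainthmWL}
\[
\Re\,\Phi^{\pm(s_1,s_2)}_{c}(\mathbf{z}^{\pm(s_1,s_2)})
= c\cdot\Vol\!\left(\SS^3\setminus WL, u_1,\, v_2\right)
= \Vol\!\left(\SS^3\setminus W_{0,1,c,0}, u_1,\, v_2=\dots=v_{c+1}\right),
\]
the last identity being the additivity of deformed volume over the octahedral decomposition.

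Finally, apply the saddle-point approximation. At $s_1=s_2=1$ the critical point corresponds to the complete hyperbolic structure, and the Hessian of $\Phi^{\pm(1,1)}_{c}$ at this point decomposes as a block-diagonal matrix built out of the non-degenerate Whitehead-link Hessian (along each clasp block) together with one additional direction for the belt variable, which inherits non-degeneracy from the two-variable Whitehead-link computation. By the implicit function theorem this non-degeneracy persists for $(s_1,s_2)$ in some neighborhood $(1-\delta,1)\times(1-\delta,1)$, giving a smoothly varying family of critical points $\mathbf{z}^{\pm(s_1,s_2)}$. Combined with the continuity of all other analytic ingredients (the deformability of the integration contour to steepest-descent, the absence of competing higher critical points, and the control of Poisson summation tails) one deduces the claimed asymptotic formula.

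The main obstacle I anticipate is precisely the multi-dimensional contour/dominance argument: one must verify that the contour $D$ can be deformed across $c+1$ complex variables onto a steepest descent contour through $\mathbf{z}^{\pm(s_1,s_2)}$ without crossing poles of $f$, and that no non-symmetric critical point of $\Phi^{\pm(s_1,s_2)}_{c}$ produces a larger real part. The continuity argument inherited from \cite{WA17} handles both issues at $(s_1,s_2)=(1,1)$, but propagating the estimates uniformly over a neighborhood, while tracking the $S_c$-orbits of possible spurious critical points, is the delicate technical point.
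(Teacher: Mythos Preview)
Your overall strategy matches the paper's: write the chain invariant as a sum, extract a potential $\Phi^{\pm(s_1,s_2)}_c$ in $c+1$ variables built from copies of the Whitehead clasp block sharing the belt variable $z_1$, locate the symmetric critical point, match the critical equations to the hyperbolic gluing equations of the $c$-octahedron decomposition, and run the continuity argument from $(s_1,s_2)=(1,1)$. This is exactly what the paper does (Sections~3.1--3.3), and indeed the paper itself says only that ``Theorem~\ref{mainthmWa1c0} then follows from similar arguments in Section~\ref{WLs}--\ref{geoWL}.''

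Two points in your write-up are not correct, however. First, the Hessian at the symmetric critical point is \emph{not} block-diagonal: the belt variable $z_1$ couples to every clasp variable, producing an arrowhead matrix
\[
\Hess\Phi^{(1,1);0,c,0}\Big(\tfrac12,\tfrac14,\dots,\tfrac14\Big)=2\pi i
\begin{pmatrix}
ci-\tfrac{c}{2} & i & \cdots & i\\
i & 2i & & \\
\vdots & & \ddots & \\
i & & & 2i
\end{pmatrix},
\]
whose determinant one must compute directly (it is nonzero, but not for the block-diagonal reason you give). Second, and more importantly, your identity $\Re\Phi^{\pm(s_1,s_2)}_c(\mathbf{z}^{\pm})=c\cdot\Vol(\SS^3\setminus WL,u_1,v_2)$ is false for $s_1\neq 1$. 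Restricted to the symmetric subspace, the belt critical equation reads $c\cdot[\cdots]=2\pi i[-(s_1-1)+c(s_2-1)]$, so each octahedron satisfies the Whitehead-link system with effective belt parameter $1-\tfrac{1-s_1}{c}$, not $s_1$; the symmetric critical point of $\Phi_c$ does not coincide with the Whitehead-link critical point at $(s_1,s_2)$, and the two volumes are not in a simple $c:1$ ratio. The correct route (and the one the paper takes) is to match the full system $(\ref{W01c01})$--$(\ref{W01c02})$ directly to the edge and holonomy equations of the $c$-octahedron triangulation of $\SS^3\setminus W_{0,1,c,0}$, and then invoke the differential formula (Lemma~\ref{dformula}) to identify $\Re\Phi_c^{\pm}(\mathbf{z}^{\pm})$ with the deformed volume of the chain itself. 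With these two corrections your argument goes through.
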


          \begin{figure}[H]
          \centering
              \includegraphics[width=0.65\linewidth]{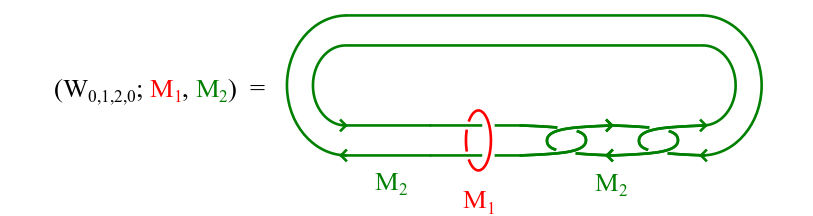}
              \caption{the link $W_{0,1,2,0}$ with belt colored by $M_1$ and all other components colored by $M_2$}\label{W0120}
          \end{figure}
          
Note that Theorem~\ref{mainthmWa1c0} gives a positive answer to Conjecture \ref{GVCHL} for the sequence of colors $(M_1,M_2,\dots,M_2)$ with $s_2=1$.

\subsubsection{Differential formula for the potential function and relation to hyperbolic geometry}
From the discussion in subsection~\ref{strate}, we have already seen the importance of the potential function. Thus, it is necessary to have a better understanding about how the geometric information of a link complement is encoded in the potential function of the link. In the study of the asymptotics of the colored Jones polynomials, we discover the following formula satisfied by the potential function of the Whitehead link. \newpage

\begin{lemma}\label{dformula}
Write $z_k = x_k + i y_k$ for $k=1,2$. The real part of the potential function satisfies the following differential equation: 
\begin{align} \label{DF}
\Re \Phi^{+(s_1,s_2)}(z_1,z_2) = \frac{1}{2\pi} V^{+(s_1,s_2)}(z_1,z_2) + \sum_{k=1}^2 y_k \frac{\partial}{\partial y_k} \Re \Phi^{s_1,s_2}(z_1,z_2),
\end{align}
where 
\begin{align*}
V^{+(s_1,s_2)}(z_1,z_2) 
&= D(e^{2\pi i(s_2-1) - 2\pi i z_1 - 2\pi i z_2}) - D(e^{2\pi i (s_2-1) - 2\pi i z_2}) + D(e^{2\pi i z_2}) \\
&\qquad - D(e^{2\pi i z_1 + 2\pi i z_2}) + D(e^{2\pi i z_1}) 
\end{align*}
and $D(z)$ is the Bloch-Wigner function.
\end{lemma}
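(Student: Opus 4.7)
The plan is to exploit the explicit form of $\Phi^{+(s_1,s_2)}$ as a sum of five dilogarithm terms, whose arguments are precisely those entering $V^{+(s_1,s_2)}$, plus a polynomial correction of degree at most two in $z_1,z_2$ with purely imaginary coefficients. From the saddle-point preparation of the colored Jones polynomial carried out in Section~\ref{formula}, the potential has the shape
\begin{align*}
\Phi^{+(s_1,s_2)}(z_1,z_2)=\tfrac{1}{2\pi i}\bigl[\Li(\zeta_1)-\Li(\zeta_2)+\Li(\zeta_3)-\Li(\zeta_4)+\Li(\zeta_5)\bigr]+P(z_1,z_2),
\end{align*}
where each $\zeta_j=e^{2\pi i u_j}$ with $u_j$ affine in $(z_1,z_2)$ with real coefficients $c_{j,k}\in\{-1,0,1\}$ (read off the $V^{+(s_1,s_2)}$ formula), and $P$ collects the Gaussian and linear contributions from the preparation. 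The first order of business is to record this concrete form.

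For the dilogarithm part, the two workhorse identities are the definition of the Bloch--Wigner function
$$\Im\Li(\zeta)=D(\zeta)-\arg(1-\zeta)\log|\zeta|$$
and the chain-rule consequence
$$\frac{\partial}{\partial y_k}\Re\bigl(\tfrac{1}{2\pi i}\Li(e^{2\pi i u_j})\bigr)=c_{j,k}\arg(1-\zeta_j),$$
which follows from $\tfrac{d}{dw}\Li(e^w)=-\log(1-e^w)$. Since $\log|\zeta_j|=-2\pi(c_{j,1}y_1+c_{j,2}y_2)$, the first identity gives
$$\Re\bigl(\tfrac{1}{2\pi i}\Li(\zeta_j)\bigr)=\tfrac{1}{2\pi}D(\zeta_j)+(c_{j,1}y_1+c_{j,2}y_2)\arg(1-\zeta_j),$$
so that, summing with signs $\epsilon_j=(+,-,+,-,+)$, the ``correction'' on the right is exactly what one gets from $\sum_k y_k\partial_{y_k}\Re$ applied to the five dilogarithms, while the $D$-terms assemble into $\tfrac{1}{2\pi}V^{+(s_1,s_2)}$.

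The polynomial remainder $P$ is handled by a direct monomial-by-monomial check of $\Re P=\sum_k y_k\partial_{y_k}\Re P$ for any polynomial in $z_1,z_2$ of degree at most two with purely imaginary coefficients; this is immediate for constants, linear monomials $i\alpha z_k$, and quadratic monomials $i\alpha z_jz_k$ with $\alpha\in\RR$ (and, importantly, it already fails from degree three on, which is why it matters that the saddle-point preparation produces only quadratic corrections). The main obstacle is therefore the first step: pinning down the explicit form of $\Phi^{+(s_1,s_2)}$ and verifying both that the five dilogarithm arguments and signs match $V^{+(s_1,s_2)}$ on the nose and that the polynomial correction has purely imaginary coefficients. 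Once this is in place, the identity follows by combining the two computations above, with the only remaining care being a consistent choice of branches of $\arg(1-\cdot)$ and of $\Li$ in the region where the saddle-point analysis is valid.
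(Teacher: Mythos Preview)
Your proposal is correct and follows essentially the same route as the paper: both handle the five dilogarithm terms via the Bloch--Wigner identity $\Im\Li(\zeta)=D(\zeta)-\log|\zeta|\Arg(1-\zeta)$ together with the derivative relation $\partial_{y_k}\Re\bigl(\tfrac{1}{2\pi i}\Li(e^{2\pi i u_j})\bigr)=c_{j,k}\Arg(1-\zeta_j)$ (the paper reaches this via Cauchy--Riemann), and both dispose of the remaining non-dilogarithm part by direct verification. The only cosmetic difference is that you phrase the argument abstractly (generic affine exponents $u_j$, generic polynomial $P$ of degree $\leq 2$ with purely imaginary coefficients), whereas the paper writes out each of the five dilogarithm identities and each of the two linear terms explicitly; note that for the Whitehead link the polynomial $P$ is in fact only linear (there is no Gaussian term since $a=0$), so your degree-two discussion is more general than needed here but does become relevant for the twisted chains $W_{a,1,c,d}$ treated later.
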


This formula has two immediate consequences. First, from Theorem~\ref{mainthmWL})1)a), there exists a family of critical point $\mathbf{z}^{+(s_1,s_2)}$ for the potential function $ \Phi^{+(s_1,s_2)}(z_1,z_2) $ such that the critical point equations coincide with the hyperbolic gluing equations for a particular triangulation of the Whitehead link complement. In particular, at this crtical point, by definition we have 
$$  \frac{\partial}{\partial y_k} \Re \Phi^{+(s_1,s_2)}(\mathbf{z}^{+(s_1,s_2)}) = 0
 \text{ for $k=1,2$.}$$
From this, using lemma above, we can see that the real part of the critical value of the potential function is the sum of the volume of the ideal tetrahedra which satisfy the hyperbolic gluing equations, i.e. the hyperbolic volume of the link complement. 

Next, the formula relates the potential function to the theory of angle structures. Note that every points on the subset 
$$\mathfrak{A} = \lt\{(z_1,z_2) \mid \im \frac{d}{dz_i} \Phi^{+(s_1,s_2)}(z_1,z_2) =  \frac{\partial}{\partial y_k} \Re \Phi^{\pm(s_1,s_2)}(z_1,z_2) = 0 \text{ for $k=1,2$}\rt\}$$ 
satisfies the property that the sum of the angles around the vertices are $2\pi$. Therefore,  $\mathfrak{A}$ is  a subset of the space of angle structures and the potential function restricted on $\mathfrak{A}$ is exactly the volume function. In particular, by the theorem of angle structures, we know that the maximum point on $\mathfrak{A}$ is exactly the hyperbolic volume of the link complement. 

Moreover, it is interesting to compare the differential formula~(\ref{DF}) with the covolume function discussed in \cite{LY18}. Recall that every hyper-ideal tetrahedron is determined (up to isometry) by six dihedral angles $\{a_{ij}\}_{1\leq i <j \leq 4}$, where $a_{ij}$ is the dihedral angle between the faces $i$ and $j$. Let $l_{ij}$ be the length of the edge between the face $i$ and the face $j$. Then $a_{ij}$ and $l_{ij}$ uniquely determine each other. Moreover, the volume function $vol$ satisfies the Sch$\ddot{\text{a}}$lfi's formula:
$$ \frac{\partial vol}{\partial a_{ij}} = - \frac{l_{ij}}{2}$$
The Legendre transform of the volume function is called the covolume function $cov$, which is given by \cite{LY18}
$$ cov (l_{ij}) = vol(a_{ij}) + \sum_{i<j } l_{ij} \cdot \frac{a_{ij}}{2} $$
Note that by the Sch$\ddot{\text{a}}$lfi's formula,
$$ \frac{\partial cov}{\partial l_{ij}} = \sum_{i<j}  \frac{\partial vol}{\partial a_{ij}}\frac{\partial a_{ij}}{\partial l_{ij}} + \sum_{i<j} \frac{l_{ij}}{2}\frac{\partial a_{ij}}{\partial l_{ij}} + \frac{a_{ij}}{2} = \frac{a_{ij}}{2}  $$
Thus, we have
\begin{align}\label{cov}
cov(l_{ij}) = vol(a_{ij}) + \sum_{i<j} l_{ij}\frac{\partial cov}{\partial l_{ij}} 
\end{align}
Now, given a triangulation $\{\Delta_1,\dots, \Delta_n\}$ of a 3-manifold $M$ with edge lengths $\{l^1_{ij},\dots,l^n_{ij}\}$, where we identify $l^a_{pq}$ with $l^b_{rs}$ whenever these two edges are glued together, we can define the covolume function $cov_M$ of $M$ to be
\begin{align}\label{cov2}
cov_M (l^1_{ij}, \dots, l^n_{ij}) = \sum_{k=1}^n cov_k (l_{ij}) = \sum_{k=1}^n vol_k(a_{ij}) + \sum_{i,j}  l_{ij} A_{ij}
\end{align}
where $A_{ij} = \frac{\partial cov_M}{\partial l_{ij}}$ is the sum of the dihedral angles around the edge $l_{ij}$. Equations (\ref{DF}), (\ref{cov}) and (\ref{cov2}) suggest that we should think of $y_k$ as some sort of `length'. We hope that this formula will help us to understand the theory of potential function.

The final remark for this equation is that the proof of Lemma~\ref{DF} is based on some basic properties of the dilogarithm function and the Lobachevsky function. In particular, by using the argument, one can show that the potential functions discussed in \cite{O52,O6,O7}, and the potential functions in \cite{KO05,CMY09} after the reparametrization $z \mapsto e^{2\pi i z}$, also satisfy the same type of differential formula. 

\subsection{Outline}
In Section~\ref{formula}, we first compute the potential functions of $J_{M_1,M_2}(WL,e^{\frac{2\pi i}{N+\frac{1}{2}}})$. By using the Poisson summation formula and the saddle point approximation, we compute its asymptotic expansion formula. Then we study how the critical point equations and the critical values are related to the geometry of the Whitehead link complement. Besides, we prove the differential formula for the potential function. Altogether, we can prove Theorem~\ref{mainthmWL}. In Section~\ref{gen}, we compute the AEF for $J_{\vec{N}}(W_{a,1,c,d},e^{\frac{2\pi i}{N+\frac{1}{2}}})$ and generalize previous results to the Whitehead chains with $1$ belt and $c$ clasps. In Section~\ref{TV}, by using the results in the previous section, we prove Corollary~\ref{TVW}.

\subsection{Acknowledgements}
The author would like to thank Tian Yang for the valuable comments and suggestions to this work. 

\section{Asymptotics of $J_{M_1,M_2}(WL,e^{\frac{2\pi i}{N+\frac{1}{2}}})$}\label{formula}
\subsection{Potential functions of $J_{M_1,M_2}(WL,e^{\frac{2\pi i}{N+\frac{1}{2}}})$}
In this section, we compute the potential function of the $(M_1,M_2)$-th unnormalized colored Jones polynomials for the Whitehead link $WL$, where $M_1,M_2$ are sequences of positive integers in $N$. First of all, the formula of the $(M_1,M_2)$-th unnormalized colored Jones polynomials of the Whitehead link is given by \cite{HZ07}
$$ J_{M_1,M_2} (WL, t) = t^{\frac{M_2^2-1}{2}} (t^{\frac{1}{2}}-t^{-\frac{1}{2}})^{-1}\sum_{n=0}^{M_2-1} (t^{\frac{M_1(2n+1)}{2}}-t^{-\frac{M_1(2n+1)}{2}})  \cdot \tilde{C}(n,t; M_2) ,$$
where 
$$\tilde{C}(n,t; M_2) = t^{\frac{M_2(M_2-1)}{2}} \sum_{l = 0}^{M_2-1-n} t^{-M_2(l+n)} \prod_{j=1}^{n} \frac{(1-t^{M_2-l-j})(1-t^{l+j})}{1-t^j}.$$
Put $t = e^{\frac{2\pi i}{N+\frac{1}{2}}}$. By direct computation, we have
\begin{align}\label{t0}
 (t^{\frac{1}{2}}-t^{-\frac{1}{2}})^{-1} = \frac{1}{2i\sin\lt(\frac{\pi}{N+\frac{1}{2}}\rt)}
\end{align}
\begin{align}
 t^{\frac{M_1(2n+1)}{2}}-t^{-\frac{M_1(2n+1)}{2}} 
&= t^{\frac{(N+\frac{1}{2})(2n+1)}{2} + \frac{(M_1 - (N+\frac{1}{2}))(2n+1)}{2}} - t^{-\frac{(N+\frac{1}{2})(2n+1)}{2} - \frac{(M_1 - (N+\frac{1}{2}))(2n+1)}{2}}  \notag \\
&= -\lt[ e^{2\pi i(\frac{M_1}{N+\frac{1}{2}} -1) (n+\frac{1}{2})} - e^{-2\pi i (\frac{M_1}{N+\frac{1}{2}} -1)(n+\frac{1}{2})} \rt]  \notag \\
&= -\lt[ e^{\pi i\lt(\frac{M_1}{N+\frac{1}{2}}-1\rt)}e^{2\pi i\lt(\frac{M_1}{N+\frac{1}{2}} -1\rt) (n)} - e^{-\pi i\lt(\frac{M_1}{N+\frac{1}{2}}-1\rt)}e^{-2\pi i \lt(\frac{M_1}{N+\frac{1}{2}} -1\rt)(n)} \rt]  \notag \\
&= (-1)^{M_1 - N}i\lt[ e^{\pi i\lt(\frac{M_1}{N+\frac{1}{2}}-1\rt)}e^{\frac{N+\frac{1}{2}}{2\pi i}\lt[(2\pi i)\lt(\frac{M_1}{N+\frac{1}{2}} -1\rt)\rt] \lt[2\pi i \lt(\frac{n}{N+\frac{1}{2}}-\frac{1}{2}\rt)\rt]} \rt. \notag\\
&\qquad\qquad\qquad\qquad\qquad \lt. + e^{-\pi i\lt(\frac{M_1}{N+\frac{1}{2}}-1\rt)}e^{-\frac{N+\frac{1}{2}}{2\pi i}\lt[(2\pi i)\lt(\frac{M_1}{N+\frac{1}{2}} -1\rt)\rt] \lt[2\pi i \lt(\frac{n}{N+\frac{1}{2}}-\frac{1}{2}\rt)\rt]} \rt]
\label{t1}
\end{align}
Besides,
\begin{align}
t^{\frac{M_2^2 - 1}{2} + \frac{M_2(M_2-1)}{2}}
&= e^{\frac{2\pi i}{N+\frac{1}{2}}(M_2^2 - \frac{M_2}{2} - \frac{1}{2})} 
\\\notag\\
t^{-M_2(l+n)} 
&=  e^{-2\pi i (\frac{M_2}{N+\frac{1}{2}}-1) (l+n)} \notag \\
&= e^{- \frac{N+\frac{1}{2}}{2\pi i} \lt[ 2\pi i \lt(\frac{M_2}{N+\frac{1}{2}} -1 \rt) \lt[ 2\pi i \lt(\frac{l}{N+\frac{1}{2}}\rt) + 2\pi i \lt( \frac{n}{N+\frac{1}{2}}\rt) \rt] \rt]} \label{t5}
\end{align}
Next, for any odd integer $r=2N+1\geq 3$, we consider the following version of the quantum dilogarithm function $\varphi_r(z)$ (see \cite{F95, FKV01}, or Section 2.3 in \cite{WY20} for a review of its properties) defined by 
\begin{align*}
\varphi_r (z) = \frac{4\pi i}{r}\int_\Omega \frac{e^{(2z - \pi)x} }{4x \sinh(\pi x) \sinh (\frac{2\pi x}{r} )} dx,
\end{align*}
where 
\begin{align*}
\Omega = (-\infty, - \epsilon] \cup \{ z \in \CC \mid |z| = \epsilon, \im z >0\} \cup [ \epsilon, \infty)
\end{align*}
for some $\epsilon \in (0,1)$ and 
\begin{align*}
z \in \lt\{ z \in \CC \lt\vert - \frac{\pi}{2N+1} < \Re z < \pi + \frac{\pi}{2N+1}\rt. \rt\}
\end{align*}
For any $z \in \CC$ with $0 < \Re z < \pi$, the quantum dilogarithm function satisfies the functional equation (Lemma 2.1)1) in \cite{WY20})
\begin{align}
1 - e^{2i z} = \exp \lt(\frac{N+\frac{1}{2}}{2\pi i}\lt( \varphi_r \lt( z - \frac{\pi }{2N+1} \rt) - \varphi_r \lt( z + \frac{\pi }{2N+1} \rt) \rt)\rt)
\end{align}
Furthermore, let $\Li: \CC\backslash (1,\infty) \to \CC$ be the dilogartihm function defined by
$$ \Li(z) = - \int_0^z \frac{\log(1-u)}{u} du $$
For any $z$ with $0<\Re z < \pi$, the quantum dilogarithm function satisfies (Lemma 2.3 in \cite{WY20})
\begin{align}
\varphi_r (z) = \Li(e^{2iz}) + \frac{2\pi^2 e^{2iz}}{3(1-e^{2iz})} \frac{1}{(2N+1)^2} + O\lt(\frac{1}{(N+\frac{1}{2})^3}\rt) \label{QtoC},
\end{align}
and as $N \to \infty$, $\varphi_r(z)$ uniformly converges to $\Li(e^{2iz})$ on any compact subset of $\{z \in \CC \mid 0 < \Re z < \pi \}$.

Using the functional equation of the quantum dilogarithm function, we have
\begin{align}
 \prod_{j=1}^{n} (1-t^{M_2-l-j}) 
=&\prod_{j=1}^n \lt(1 - e^{2\pi i \lt(\frac{M_2}{N+\frac{1}{2}} - \frac{l}{N+\frac{1}{2}} - \frac{j}{N+\frac{1}{2}}\rt)}\rt)\notag \\
=&\exp\lt(\frac{N+\frac{1}{2}}{2\pi i} \sum_{j=1}^n \lt( \varphi_r \lt(\frac{M_2 \pi }{N+\frac{1}{2}} - \frac{l\pi}{N+\frac{1}{2}} - \frac{(j+\frac{1}{2})\pi}{N+\frac{1}{2}}\rt) \rt. \rt.\notag\\
&\lt. \lt.\qquad - \varphi_r\lt( \frac{M_2\pi}{N+\frac{1}{2}} - \frac{l\pi}{N+\frac{1}{2}} - \frac{(j-\frac{1}{2})\pi}{N+\frac{1}{2}}\rt)  \rt) \rt) \notag\\
=& \exp \lt(\frac{N+\frac{1}{2}}{2\pi i} \lt(\varphi_r \lt( \frac{M_2\pi}{N+\frac{1}{2}} - \frac{l\pi}{N+\frac{1}{2}} - \frac{n\pi}{N+\frac{1}{2}} - \frac{\pi}{2N+1}\rt) \rt.\rt.\notag\\
&\lt.  \lt.\qquad - \varphi_r\lt(\frac{M_2\pi}{N+\frac{1}{2}} - \frac{l\pi}{N+\frac{1}{2}} - \frac{\pi}{2N+1}\rt)  \rt)  \rt)\label{t6}
\end{align}
\begin{align}
 \prod_{j=1}^{n} (1-t^{l+j})
=&\prod_{j=1}^n \lt(1 - e^{2\pi i \lt(\frac{l}{N+\frac{1}{2}} + \frac{j}{N+\frac{1}{2}}\rt)}\rt)\notag \\
=&\exp\lt(\frac{N+\frac{1}{2}}{2\pi i} \sum_{j=1}^n \lt( \varphi_r \lt(\frac{l\pi}{N+\frac{1}{2}} + \frac{(j-\frac{1}{2})\pi}{N+\frac{1}{2}}\rt)  - \varphi_r\lt( \frac{l\pi}{N+\frac{1}{2}} + \frac{(j+\frac{1}{2})\pi}{N+\frac{1}{2}}\rt)  \rt) \rt)\notag \\
=&\exp\lt(\frac{N+\frac{1}{2}}{2\pi i} \lt( \varphi_r \lt(\frac{l\pi}{N+\frac{1}{2}} + \frac{\pi}{2N+1}\rt)  - \varphi_r\lt( \frac{l\pi}{N+\frac{1}{2}} + \frac{n\pi}{N+\frac{1}{2}}+\frac{\pi}{2N+1}\rt)  \rt) \rt)\label{t7}
\\ \notag\\
 \prod_{j=1}^{n} (1-t^{j})
=&\prod_{j=1}^n \lt(1 - e^{2\pi i \lt(\frac{j}{N+\frac{1}{2}}\rt)}\rt) \notag\\
=&\exp\lt(\frac{N+\frac{1}{2}}{2\pi i} \lt(\sum_{j=1}^n \lt( \varphi_r \lt( \frac{(j-\frac{1}{2})\pi}{N+\frac{1}{2}}\rt)  - \varphi_r\lt( \frac{(j+\frac{1}{2})\pi}{N+\frac{1}{2}}\rt)  \rt) \rt)\rt)\notag \\
=&\exp\lt(\frac{N+\frac{1}{2}}{2\pi i} \lt( \varphi_r \lt(  \frac{\pi}{2N+1}\rt)  - \varphi_r\lt( \frac{n\pi}{N+\frac{1}{2}} + \frac{\pi}{2N+1}\rt)   \rt) \rt) \label{t8}
 \end{align}
 
Altogether, by (\ref{t0}) - (\ref{t8}), we have
\begin{align}
&\quad J_{M_1, M_2} (WL, e^{\frac{2\pi i}{N+\frac{1}{2}}}) \notag \\
&=  \frac{ (-1)^{(M_1 - N) }  e^{\frac{2\pi i}{N+\frac{1}{2}}(M_2^2 - \frac{M_2}{2} - \frac{1}{2})}}{2\sin(\frac{\pi}{N+\frac{1}{2}})} \notag \\
&\qquad \times \sum_{n=0}^{M_2-1} 
\lt[ e^{\pi i\lt(\frac{M_1}{N+\frac{1}{2}}-1\rt)}e^{\frac{N+\frac{1}{2}}{2\pi i}\lt[(2\pi i)\lt(\frac{M_1}{N+\frac{1}{2}} -1\rt)\rt] \lt[2\pi i \lt(\frac{n}{N+\frac{1}{2}}-\frac{1}{2}\rt)\rt]} \rt. \notag\\
&\qquad\qquad\qquad\qquad\qquad \lt. + e^{-\pi i\lt(\frac{M_1}{N+\frac{1}{2}}-1\rt)}e^{-\frac{N+\frac{1}{2}}{2\pi i}\lt[(2\pi i)\lt(\frac{M_1}{N+\frac{1}{2}} -1\rt)\rt] \lt[2\pi i \lt(\frac{n}{N+\frac{1}{2}}-\frac{1}{2}\rt)\rt]} \rt] \notag \\
&\qquad  \times \sum_{l=0}^{M_2-1-n}   e^{- \frac{N+\frac{1}{2}}{2\pi i} \lt[ 2\pi i \lt(\frac{M_2}{N+\frac{1}{2}} -1 \rt) \lt[ 2\pi i \lt(\frac{l}{N+\frac{1}{2}}\rt) + 2\pi i \lt( \frac{n}{N+\frac{1}{2}}\rt) \rt] \rt]} \notag\\ 
&\qquad \times \exp\lt(\frac{N+\frac{1}{2}}{2\pi i} \lt(\varphi_r \lt(\frac{M_2\pi}{N+\frac{1}{2}} - \frac{l\pi}{N+\frac{1}{2}} - \frac{n\pi}{N+\frac{1}{2}} - \frac{\pi}{2N+1}\rt)\rt. \rt. \notag\\
&\qquad\qquad\qquad  - \varphi_r\lt(\frac{M_2\pi}{N+\frac{1}{2}} - \frac{l\pi}{N+\frac{1}{2}} - \frac{\pi}{2N+1}\rt) \notag \\
&\qquad\qquad\qquad  + \varphi_r \lt(\frac{l\pi}{N+\frac{1}{2}} + \frac{\pi}{2N+1}\rt)  - \varphi_r\lt( \frac{l\pi}{N+\frac{1}{2}} + \frac{n\pi}{N+\frac{1}{2}} + \frac{\pi}{2N+1}\rt) \notag\\
&\lt.\lt.\qquad\qquad\qquad  - \varphi_r \lt(  \frac{\pi}{2N+1}\rt)  + \varphi_r\lt( \frac{n\pi}{N+\frac{1}{2}} + \frac{\pi}{N+\frac{1}{2}}\rt) \rt)\rt) 
\end{align}
Define the limiting ratio $s_1$ and $s_2$ by
$$ s_1 = \lim_{N \to \infty} \frac{M_1}{N+\frac{1}{2}} \quad\text{ and }\quad s_2 = \lim_{N \to \infty} \frac{M_1}{N+\frac{1}{2}},$$
Then we have
\begin{align}
J_{M_1, M_2} (WL, e^{\frac{2\pi i}{N+\frac{1}{2}}}) 
&=  \frac{ (-1)^{(M_1 - N) }  e^{\frac{2\pi i}{N+\frac{1}{2}}(M_2^2 - \frac{M_2}{2} - \frac{1}{2})}}{2\sin(\frac{\pi}{N+\frac{1}{2}})} \exp\lt(  -\varphi_r \lt(  \frac{\pi}{2N+1}\rt)\rt)
 (I_+ + I_-)
\end{align}
where
\begin{align}
I_\pm =
& e^{\pm\pi i \lt(\frac{M_2}{N+\frac{1}{2}} - 1\rt) }\sum_{n=0}^{M_2-1} \sum_{l=0}^{M_2-1-n} \exp\lt( \lt(N+\frac{1}{2}\rt)\Phi^{\pm(s_1, s_2)}_{M_1, M_2} \lt(\frac{n}{N+\frac{1}{2}},\frac{l}{N+\frac{1}{2}}\rt) \rt)
\end{align}
with
\begin{align}
\Phi^{\pm(s_1, s_2)}_{M_1, M_2} (z_1,z_2) 
&= \frac{1}{2\pi i}\lt\{ \pm \lt(2\pi i \lt(\frac{M_1}{N+\frac{1}{2}} - 1\rt)\rt) \lt(2\pi i \lt(z_1 -\frac{1}{2} \rt)\rt) \rt. \notag \\
&\qquad \lt.  - \lt(2\pi i \lt(\frac{M_2}{N+\frac{1}{2}} - 1\rt)\rt)(2\pi i z_1 + 2\pi i z_2) \rt. \notag \\
&\qquad  + \lt[\varphi_r \lt( \frac{M_2\pi}{N+\frac{1}{2}}  - \pi z_1 - \pi z_2 -\frac{\pi}{2N+1}\rt) \rt.  \notag\\
&\qquad  - \varphi_r\lt(  \frac{M_2\pi}{N+\frac{1}{2}} - \pi z_2- \frac{\pi}{2N+1}\rt) \notag \\
&\qquad  + \varphi_r \lt(\pi z_2 + \frac{\pi}{2N+1}\rt)  - \varphi_r\lt( \pi z_1 + \pi z_2 + \frac{\pi}{2N+1}\rt) \notag\\
&\lt.\lt.\qquad  + \varphi_r\lt( \pi z_1 + \frac{\pi}{2N+1}\rt) \rt] \rt\}
\end{align}
defined on $\lt\{ (z_1,z_2) \in \CC \mid 0 \leq \Re z_1 , \Re z_2 \leq \pi, \Re z_1 + \Re z_2 < \frac{M_2}{N+\frac{1}{2}} \rt\}$. Take $N\to \infty$, we obtain the potential functions 
\begin{align*}
\Phi^{\pm(s_1, s_2)} (z_1,z_2) 
&=  \frac{1}{2\pi i} \lt[ \pm \lt(2\pi i \lt( s_1 - 1\rt)\rt) \lt(2\pi i \lt(z_1 -\frac{1}{2} \rt)\rt)  - (2\pi i (s_2-1))(2\pi i z_1 + 2\pi i z_2)  \rt. \\
&\qquad + \Li\lt(e^{2\pi i (s_2-1) - 2\pi i z_1 - 2\pi i z_2}\rt) - \Li\lt( e^{2\pi i (s_2-1) - 2\pi i z_2 }\rt) \\
&\qquad \lt.+ \Li \lt(e^{2\pi i z_2}\rt) - \Li\lt(e^{2\pi iz_1+ 2\pi i z_2}\rt) + \Li\lt(e^{2\pi i z_1}\rt)  \rt]
\end{align*}
defined on $\lt\{ (z_1,z_2) \in \CC \mid \Re z_1 , \Re z_2 >0, \Re z_1 + \Re z_2 < s_2 \rt\}$.

The following lemma estimates the difference between $\Phi^{\pm(s_1, s_2)}_{M_1, M_2} (z_1,z_2) $ and $\Phi^{\pm\lt(\frac{M_1}{N+\frac{1}{2}}, \frac{M_2}{N+\frac{1}{2}}\rt)} (z_1,z_2) $.

\begin{lemma}\label{EM_2}
On any compact subset of $\lt\{ (z_1,z_2) \in \CC \mid \Re z_1 , \Re z_2 >0, \Re z_1 + \Re z_2 < s_2 \rt\}$,
\begin{align}
\Phi^{\pm(s_1,s_2)}_{M_1,M_2}(z_1, z_2 )
= \Phi^{\pm\lt(\frac{M_1}{N+\frac{1}{2}}, \frac{M_2}{N+\frac{1}{2}}\rt)} (z_1,z_2) +\frac{1}{N+\frac{1}{2}}E_{M_2}(z_1,z_2) + O\lt(\frac{1}{\lt(N+\frac{1}{2}\rt)^2}\rt),
\end{align}
where
\begin{align}
E_{M_2}(z_1,z_2) 
&= \frac{1}{2}\lt[ \log\lt(1-e^{2\pi i \lt(\frac{M_2}{N+\frac{1}{2}}-1\rt) - 2\pi i z_1 - 2\pi i z_2 }\rt)
- \log\lt(1-e^{2\pi i \lt(\frac{M_2}{N+\frac{1}{2}}-1\rt) - 2\pi i z_2 } \rt) \rt.\notag\\
&\lt.\qquad - \log\lt(1- e^{2\pi i z_2 } \rt)
+ \log\lt(1- e^{2\pi iz_1+ 2\pi i z_2} \rt) 
- \log\lt(1-  e^{2\pi i z_1 } \rt) \rt]
\end{align}
\end{lemma}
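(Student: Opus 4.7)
The plan is to subtract the two potential functions term by term and quantify the resulting difference using the asymptotic formula (\ref{QtoC}) combined with a first-order Taylor expansion at the shifts $\pm\pi/(2N+1)$. Set $a_N = M_1/(N+\tfrac{1}{2})$ and $b_N = M_2/(N+\tfrac{1}{2})$. The polynomial parts of $\Phi^{\pm(s_1,s_2)}_{M_1,M_2}(z_1,z_2)$ (the two products of $2\pi i$-factors in the first two lines of its definition) are identical to those of $\Phi^{\pm(a_N,b_N)}(z_1,z_2)$, so they cancel exactly. What remains is a sum of five terms of the form
\begin{equation*}
\frac{1}{2\pi i}\Big[\varphi_r\bigl(w \pm \tfrac{\pi}{2N+1}\bigr) - \Li(e^{2iw})\Big],
\end{equation*}
where $w$ ranges over $\pi z_1$, $\pi z_2$, $\pi z_1 + \pi z_2$, $b_N\pi - \pi z_2$, and $b_N\pi - \pi z_1 - \pi z_2$.

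Next I would Taylor-expand $\varphi_r$ around $w$. The asymptotic (\ref{QtoC}) gives $\varphi_r(z) = \Li(e^{2iz}) + O(1/(N+\tfrac{1}{2})^2)$ uniformly on compact subsets of $\{0 < \Re z < \pi\}$. Applying Cauchy's integral formula for derivatives on a slightly enlarged compact neighbourhood yields the same uniform bound for $\varphi_r^{(k)}(z) - \tfrac{d^k}{dz^k}\Li(e^{2iz})$ for $k=1,2$. In particular $\varphi_r'(w) = -2i\log(1-e^{2iw}) + O(1/(N+\tfrac{1}{2})^2)$ and $\varphi_r''$ is uniformly bounded. Therefore
\begin{equation*}
\varphi_r\bigl(w \pm \tfrac{\pi}{2N+1}\bigr) = \Li(e^{2iw}) \mp \frac{\pi i}{N+\tfrac{1}{2}}\log(1-e^{2iw}) + O\Bigl(\tfrac{1}{(N+\tfrac{1}{2})^2}\Bigr),
\end{equation*}
so each of the five differences above contributes $\mp\tfrac{1}{2(N+\tfrac{1}{2})}\log(1-e^{2iw}) + O(1/(N+\tfrac{1}{2})^2)$.

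Finally I would substitute the five specific values of $w$ and their respective shift signs ($-$ for the two $\varphi_r$ with argument $b_N\pi - \pi z_j - \pi/(2N+1)$, $+$ for the three with argument $\pi z_j + \pi/(2N+1)$ or $\pi z_1 + \pi z_2 + \pi/(2N+1)$), multiply through by the $\pm 1$ with which that term appears in $\Phi^{\pm(s_1,s_2)}_{M_1,M_2}$, and collect. Noting that $e^{2\pi i b_N} = e^{2\pi i(b_N-1)}$, the bookkeeping produces exactly the five logarithms in $E_{M_2}(z_1,z_2)$ with the advertised coefficients, and the remainder $O(1/(N+\tfrac{1}{2})^2)$ is uniform on any compact subset by construction. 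The hypothesis $\Re z_1, \Re z_2 > 0$ and $\Re z_1 + \Re z_2 < s_2$ ensures that every argument, shifted or not, stays strictly inside $\{0 < \Re z < \pi\}$ on compact subsets for $N$ large enough, which legitimises the use of (\ref{QtoC}).

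The main obstacle is not conceptual but rather the careful sign bookkeeping — each term has both a shift sign $\pm \pi/(2N+1)$ and an outer sign in the definition of $\Phi^{\pm(s_1,s_2)}_{M_1,M_2}$ — together with securing uniform $O(1/(N+\tfrac{1}{2})^2)$ control of $\varphi_r' - \tfrac{d}{dw}\Li(e^{2iw})$ and a uniform bound for $\varphi_r''$ via Cauchy. Once these uniform estimates are in hand, the lemma follows by direct substitution.
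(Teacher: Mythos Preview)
Your proof is correct and follows essentially the same approach as the paper: both use the asymptotic (\ref{QtoC}) together with a first-order Taylor expansion in the shift $\pm\pi/(2N+1)$ to extract the $\frac{1}{N+\frac{1}{2}}E_{M_2}$ term. The only cosmetic difference is the order of operations: the paper first applies (\ref{QtoC}) at the shifted arguments to replace each $\varphi_r$ by a classical dilogarithm $\Li(e^{\cdots \pm \pi i/(N+\frac{1}{2})})$, and then Taylor-expands $\Li$ in the small shift, whereas you Taylor-expand $\varphi_r$ first (controlling $\varphi_r'$ and $\varphi_r''$ via Cauchy) and then invoke (\ref{QtoC}) at the unshifted point; the paper's ordering is marginally simpler since the Taylor expansion is of the explicit analytic function $\Li$ and no appeal to Cauchy on $\varphi_r$ is needed.
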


\begin{proof}
From (\ref{QtoC}), we have
\begin{align}
&\Phi^{+(s_1,s_2)}_{M_1,M_2}(z_1, z_2 ) \notag\\
&=  \frac{1}{2\pi i} \lt[ \pm \lt(2\pi i \lt( \frac{M_1}{N+\frac{1}{2}} - 1\rt)\rt) \lt(2\pi i \lt(z_1 -\frac{1}{2} \rt)\rt)  - \lt(2\pi i \lt(\frac{M_2}{N+\frac{1}{2}}-1\rt)\rt)(2\pi i z_1 + 2\pi i z_2)  \rt. \notag\\
&\qquad + \Li\lt(e^{2\pi i \lt(\frac{M_2}{N+\frac{1}{2}}-1\rt) - 2\pi i z_1 - 2\pi i z_2 - \frac{\pi i}{N+\frac{1}{2}}}\rt) - \Li\lt( e^{2\pi i \lt(\frac{M_2}{N+\frac{1}{2}}-1\rt) - 2\pi i z_2 - \frac{\pi i}{N+\frac{1}{2}}}\rt) \notag\\
&\qquad \lt.+ \Li \lt(e^{2\pi i z_2 + \frac{\pi i}{N+\frac{1}{2}}}\rt) - \Li\lt(e^{2\pi iz_1+ 2\pi i z_2 + \frac{\pi i}{N+\frac{1}{2}}}\rt) + \Li\lt(e^{2\pi i z_1 + \frac{\pi i}{N+\frac{1}{2}}}\rt)  \rt] + O\lt(\frac{1}{(N+\frac{1}{2})^2}\rt)
\end{align}
By considering the Talyor series expansion of $\Li\lt(e^{2\pi i \lt(\frac{M_2}{N+\frac{1}{2}}-1\rt) - 2\pi i z_1 - 2\pi i z_2 + v}\rt)$ with respect to $v$, we have
\begin{align}
&\Li\lt(e^{2\pi i \lt(\frac{M_2}{N+\frac{1}{2}}-1\rt) - 2\pi i z_1 - 2\pi i z_2 - \frac{\pi i}{N+\frac{1}{2}}}\rt) \notag\\
&= \Li\lt(e^{2\pi i \lt(\frac{M_2}{N+\frac{1}{2}}-1\rt) - 2\pi i z_1 - 2\pi i z_2}\rt)+\frac{\pi i}{N+\frac{1}{2}}\log\lt(1-e^{2\pi i \lt(\frac{M_2}{N+\frac{1}{2}}-1\rt) - 2\pi i z_1 - 2\pi i z_2}\rt)
+  O\lt(\frac{1}{(N+\frac{1}{2})^2}\rt)
\end{align}
The result follows by applying the same argument to all the dilogarithm terms.
\end{proof}

\subsection{The asymptotics expansion formula for $J_{N,N}(WL, e^{\frac{2\pi i}{N+\frac{1}{2}}})$}\label{WLNN}
Next, we consider the $(N,N)$-th colored Jones polynomials for the Whitehead Link. Note that in this case, (\ref{t1}) and (\ref{t5}) can be simplified as
\begin{align*}
 t^{\frac{N(2n+1)}{2}}-t^{-\frac{N(2n+1)}{2}} 
&= 2i\sin\lt( \frac{\pi(n+\frac{1}{2})}{N+\frac{1}{2}} \rt) \\
t^{-N(l+n)} 
&= e^{\pi i\lt( \frac{l}{N+\frac{1}{2}} + \frac{n}{N+\frac{1}{2}} \rt)}
\end{align*}
Define
\begin{align}
\tilde\Phi^{(1,1)}_{N,N} (z_1,z_2) 
&= \frac{1}{2\pi i} \lt[\varphi_r \lt( \frac{N\pi}{N+\frac{1}{2}}  - \pi z_1 - \pi z_2 -\frac{\pi}{2N+1}\rt) \rt. \notag\\
&\qquad  - \varphi_r\lt(  \frac{N\pi}{N+\frac{1}{2}} - \pi z_2- \frac{\pi}{2N+1}\rt) \notag \\
&\qquad  + \varphi_r \lt(\pi z_2 + \frac{\pi}{2N+1}\rt)  - \varphi_r\lt( \pi z_1 + \pi z_2 + \frac{\pi}{2N+1}\rt) \notag\\
&\lt.\qquad  + \varphi_r\lt( \pi z_1 + \frac{\pi}{2N+1}\rt) \rt] 
\end{align}
We can see that 
\begin{align*}
&\lim_{N\to\infty} \tilde\Phi^{(1,1)}_{N,N} (z_1,z_2) \\
&= \Phi^{\pm(1,1)}(z_1,z_2)\\
&=  \frac{1}{2\pi i} \lt[ \Li\lt(e^{- 2\pi i z_1 - 2\pi i z_2}\rt) - \Li\lt( e^{ - 2\pi i z_2 }\rt) + \Li \lt(e^{2\pi i z_2}\rt) - \Li\lt(e^{2\pi iz_1+ 2\pi i z_2}\rt)
 + \Li\lt(e^{2\pi i z_1}\rt)  \rt]
\end{align*}

Since $\Phi^{+(1,1)}(z_1,z_2)$ and $\Phi^{-(1,1)}(z_1,z_2)$ are the same, for simplicity we use $\Phi^{(1,1)}(z_1,z_2)$ to denote $\Phi^{\pm(1,1)}(z_1,z_2)$. Recall that for $\theta \in (0,\pi)$, we have 
$$ \Li (e^{2i\theta}) = \Li(1) + \theta(\pi - \theta) + 2i \Lambda(\theta)  ,$$
where $\Lambda$ is the Lobachevsky function defined by
$$ \Lambda(\theta) = -\int_0^\theta \log|2\sin t| dt $$
Also, the Lobachevsky function $\Lambda$ is an odd function. Let $$\Delta = \{(x,y) \in [0,1]^2 \mid x+y \leq 1\}$$
Then for $(z_1,z_2) \in \Delta$, the real part of $\Phi^{(1,1)}(x_1,x_2)$ is given by
\begin{align*}
\Re \Phi^{(1,1)} (x_1,x_2) 
&=  \frac{1}{\pi } \lt[ \Lambda\lt(- \pi  z_1 - \pi  z_2\rt) - \Lambda\lt(  - \pi  z_2 \rt) + \Lambda \lt(\pi  z_2\rt) - \Lambda\lt(\pi z_1+ \pi  z_2\rt) + \Lambda\lt(\pi z_1\rt) \rt] \\
&= \frac{1}{\pi} [-2 \Lambda(\pi x_1 + \pi x_2) + 2\Lambda(\pi x_2) + \Lambda(\pi x_1)]\\
&= \frac{1}{\pi} f(x_1,x_2)
\end{align*}

Note that this function $f(x_1, x_2)$ is the function appeared in Equation (3.22) of \cite{HZ07}. In particular, the real part of $\Phi^{(1,1)}(x_1,x_2)$ has a unique maximum at $(\frac{1}{2}, \frac{1}{4})$ in the $\Delta$, with maximum value 
$$ \Re\Phi^{(1,1)} \lt(\frac{1}{2}, \frac{1}{4}\rt) = \frac{1}{\pi}f \lt(\frac{1}{2}, \frac{1}{4}\rt) = \frac{1}{2\pi} \lt[8\Lambda\lt(\frac{\pi}{4}\rt)\rt] = \frac{1}{2\pi} v_8 = \frac{1}{2\pi} \Vol(WL)$$
Furthermore, the critical point equations of the potential function are given by
\begin{empheq}[left = \empheqlbrace]{align}
&\quad \log(1-e^{-2\pi i z_1 - 2\pi i z_2}) + \log(1 - e^{2\pi i z_1 + 2\pi i z_2}) - \log(1 - e^{2\pi i z_1}) = 0 \label{1meridian1}\\ \notag \\
&\quad \log(1-e^{-2\pi i z_1 - 2\pi i z_2}) + \log(1 - e^{2\pi i z_1 + 2\pi i z_2}) - \log(1 - e^{- 2\pi i z_2}) - \log(1 - e^{2\pi i z_2}) = 0 \label{1long1} 
\end{empheq}

Note that the point $(z_1, z_2) =  \lt(\frac{1}{2}, \frac{1}{4}\rt)$ indeed solves the equations $(\ref{1meridian1})$ and $(\ref{1long1})$. Moreover, at this critical point,
\begin{align*}
\frac{\partial^2}{\partial z_1^2} \Phi^{(1,1)} \lt(\frac{1}{2}, \frac{1}{4}\rt) &= (2\pi i) \lt( \frac{i}{1-i} - \frac{ -i}{1+i} + \frac{-1}{2} \rt) = 2\pi i \lt( \frac{2i-1}{2} \rt) \\ 
\frac{\partial^2}{\partial z_2^2} \Phi^{(1,1)} \lt(\frac{1}{2}, \frac{1}{4}\rt) &= (2\pi i) \lt( \frac{i}{1-i} - \frac{-i}{1+i} - \frac{-i}{1+i} + \frac{i}{1-i} \rt) = 2\pi i (2i)\\
\frac{\partial^2}{\partial z_1 \partial z_2} \Phi^{(1,1)}  \lt(\frac{1}{2}, \frac{1}{4}\rt) &= (2\pi i) \lt( \frac{i}{1-i} - \frac{ -i}{1+i} \rt) = 2\pi i (i)
\end{align*}

As a result, since
\begin{align*}
-\det (\text{Hess} \Phi^{(1,1)})  \lt(\frac{1}{2}, \frac{1}{4}\rt)  = -4\pi^2 (1+i) \neq 0,
\end{align*}
$\Phi^{(1,1)}(z_1,z_2)$ has a non-degenerate critical point at $\lt(\frac{1}{2},\frac{1}{4}\rt)$ with critical value
\begin{align}
&\Phi^{(1,1)}\lt(\frac{1}{2},\frac{1}{4}\rt)  \notag\\
&= \frac{1}{2\pi i} \lt[ 2\Li(i) - 2\Li(-i) + \Li(-1) \rt] \notag \\
&= \frac{1}{2\pi i} \lt[ 2\lt(\Li(1) + \lt(\frac{\pi}{4}\rt)\lt(\pi - \frac{\pi}{4}\rt) + 2i \Lambda\lt(\frac{\pi}{4}\rt) \rt) - 2 \lt(\Li(1) + \lt(\frac{3\pi}{4}\rt)\lt(\pi - \frac{3\pi}{4}\rt) + 2i \Lambda\lt(\frac{3\pi}{4}\rt) \rt) - \frac{\pi^2}{12} \rt] \notag\\
&= \frac{1}{2\pi} \lt[ 8\Lambda\lt(\frac{\pi}{4}\rt) + \frac{\pi^2 i}{12} \rt] \notag\\
&= \frac{1}{2\pi} \lt[ \Vol(\SS^3\backslash WL) + \frac{\pi^2 i}{12} \rt] 
\end{align}

We denote this non-degenerate critical point by $\mathbf{z}^{(1,1)}$. Similarly, we let $\mathbf{z}^{(1,1)}_{N,N}$ to be the critical point of the function $\tilde\Phi^{(1,1)}_{N,N}(z_1,z_2)$ with $\lim_{N\to \infty}\mathbf{z}^{(1,1)}_{N,N}= (\frac{1}{2},\frac{1}{4})$. 

Next, we are going to study the asymptotic expansion formula for $J_{N,N}(WL, e^{\frac{2\pi i}{N+\frac{1}{2}}})$ and show that the exponential growth is given by the critical value of $\Phi^{(1,1)} \lt(\mathbf{z}^{(1,1)}\rt)$. From previous discussion, since the critical value is the unique maximum of $\Re\Phi(z_1,z_2)$ on $\Delta$, it sufficies to study the asymptotic expansion formula near this critical point. 

Since $\mathbf{z}^{(1,1)}$ is a critical point, we can find a 4 real dimension ball $B_R \subset \CC^2 \cong \RR^4$ centred at the non-degenerate critical point with radius $R$ such that whenever $(x_1\pm\eta i,x_2), (x_1, x_2\pm\eta i) \in B_R $. we have  
\begin{align}\label{ball}
 \lt| \frac{d}{d \eta} \Re\Phi(x_1\pm\eta i,x_2) \rt|, \lt| \frac{d}{d\eta} \Re\Phi(x_1, x_2\pm\eta i)  \rt| < \pi 
\end{align}
In particular, we may choose $R$ small enough such that the intersection $\partial B_R \cap \Delta$ is a circle centered at the critical point. Let
\begin{align*}
S^\pm_1 &= \{ (x_1\pm\eta i, x_2) \mid (x_1,x_2) \in \partial B_R \cap \Delta, \eta \in [0,R /2]  \}
\cup \{ (x_1\pm ( R/2 ) i, x_2) \mid (x_1,x_2) \in B_R \cap \Delta \} \\
S^\pm_2 &= \{ (x_1, x_2\pm\eta i) \mid (x_1,x_2) \in \partial B_R \cap \Delta, \eta \in [0,R /2]  \} 
\cup \{ (x_1, x_2\pm ( R/2 ) i) \mid (x_1,x_2) \in B_R \cap \Delta \}
\end{align*}
For every points on $S_1^\pm$, we have
\begin{align}\label{S1}
\Re\Phi^{(1,1)}(x_1 \pm \eta i, x_2) - \Re\Phi^{(1,1)}(\mathbf{z}^{(1,1)}) \leq \lt[\Re\Phi^{(1,1)}(x_1,x_2) - \Re\Phi^{(1,1)}(\mathbf{z}^{(1,1)}) \rt] + \pi \eta < 2\pi \eta
\end{align}
Similarly, for every points on $S_2^\pm$, we have
\begin{align}\label{S2}
\Re\Phi^{(1,1)}(x_1, x_2 \pm \eta i) - \Re\Phi^{(1,1)}(\mathbf{z}^{(1,1)}) \leq \lt[\Re\Phi^{(1,1)}(x_1,x_2) - \Re\Phi^{(1,1)}(\mathbf{z}^{(1,1)}) \rt] + \pi \eta < 2\pi \eta
\end{align}
Thus, by the Poisson Summation Formula (Proposition 4.6, Remark 4.7 and 4.8 in \cite{O52}), we have
\begin{align}
&J_{N,N} (WL, e^{\frac{2\pi i}{N+\frac{1}{2}}}) \notag\\
&=  \frac{ e^{\frac{2\pi i}{N+\frac{1}{2}}(N^2 - \frac{N}{2} - \frac{1}{2})}}{\sin(\frac{\pi}{N+\frac{1}{2}})} \exp\lt(  -\varphi_r \lt(  \frac{\pi}{2N+1}\rt)\rt)\sum_{n=0}^{N-1}\sum_{l=1}^{N-1-n} \sin\lt( \pi \lt(\frac{n}{N+\frac{1}{2}}\rt) + \frac{\pi}{2N+1} \rt) e^{\pi i \lt(\frac{n}{N+\frac{1}{2}}+\frac{l}{N+\frac{1}{2}}\rt)} \notag\\
&\qquad \times
\exp\lt( \lt(N+\frac{1}{2}\rt) \tilde\Phi^{\pm(1,1)}_{N,N} \lt(\frac{n}{N+\frac{1}{2}},\frac{l}{N+\frac{1}{2}}\rt) \rt) \notag \\
&\stackrel[N \to \infty]{\sim}{} \frac{ e^{\frac{2\pi i}{N+\frac{1}{2}}(N^2 - \frac{N}{2} - \frac{1}{2})}}{\sin(\frac{\pi}{N+\frac{1}{2}})} \exp\lt(  -\varphi_r \lt(  \frac{\pi}{2N+1}\rt)\rt) \lt(N+\frac{1}{2}\rt)^2\notag\\
&\qquad \times \int\int_{B_R \cap \Delta} \sin\lt( \pi z_1 + \frac{\pi}{2N+1} \rt) e^{\pi i \lt(z_1+z_2\rt)} 
\exp\lt( \lt(N+\frac{1}{2}\rt) \tilde\Phi^{(1,1)}_{N,N} \lt(z_1,z_2\rt) \rt) dz_1 dz_2
\end{align}

Next, we want to apply the Saddle point approximation. A key observation is that we can choose $r$ as small as possible such that the assumption in Proposition 3.5 in \cite{O52} is automatically satisfied. Moreover, by Lemma A.3 of \cite{O52}, we have
\begin{align}\label{QDFcoe}
\exp\lt(-\varphi^h \lt( \frac{\pi}{2N+1} \rt) \rt) 
&= \exp\lt(- \frac{N+\frac{1}{2}}{2\pi i} \frac{\pi^2}{6} - \frac{1}{2}\log (N+\frac{1}{2}) - \frac{\pi i}{4} + \frac{\pi i}{12 (N+\frac{1}{2})} \rt) \notag\\
&\stackrel[N \to \infty]{\sim}{ } e^{-\frac{\pi i}{4}}(N+\frac{1}{2})^{-\frac{1}{2}}\exp\lt( \lt( N+\frac{1}{2}\rt) \frac{\pi i}{12} \rt)
\end{align}

Altogether, by the saddle point approximation (Proposition 3.5 in \cite{O52}), we have
\begin{align}
&\quad J_{N, N} (WL, e^{\frac{2\pi i}{N+\frac{1}{2}}}) \notag  \\
&\stackrel[N \to \infty]{\sim}{}  - e^{-\frac{\pi i}{4}} \frac{(N+\frac{1}{2})^{\frac{1}{2}}}{\pi}\exp\lt( \frac{N+\frac{1}{2}}{2\pi } \frac{\pi^2 i}{6} \rt) 
(N+\frac{1}{2})^2 \notag \\
&\qquad \int\int_{B_R \cap \Delta} \sin\lt( \pi z_1 + \frac{\pi}{2N+1} \rt) e^{\pi i \lt(z_1+z_2\rt)} 
\exp\lt( \lt(N+\frac{1}{2}\rt) \tilde\Phi^{(1,1)}_{N,N} \lt(z_1,z_2\rt) \rt) dz_1 dz_2 \notag \\
&\stackrel[N \to \infty]{\sim}{}  \frac{\pi(N+\frac{1}{2})^{3/2}}{\sqrt{ -4 \pi^2(1+i) }} 
\times\exp\lt( \lt(N+\frac{1}{2}\rt)\lt( \tilde\Phi^{(1,1)}_{N,N} (\mathbf{z}^{(1,1)}_{N,N}) + \frac{\pi i}{12} \rt)  \rt) \lt(1 + O\lt(\frac{1}{N}\rt) \rt)
\end{align}

with
\begin{align}
\lim_{N\to \infty}
\lt[ \tilde\Phi^{(1,1)}_{N,N} (\mathbf{z}^{(1,1)}_{N,N}) + \frac{\pi i}{12} \rt]
= \frac{1}{2\pi}\lt[ \Vol(\SS^3\backslash WL) + \frac{\pi^2 i }{4} \rt]
= \frac{1}{2\pi}[\Vol(\SS^3\backslash WL)+ i \CS(\SS^3\backslash WL)]
\end{align}

\begin{remark}
If we use the normalization in \cite{HZ07} and \cite{V08}, then there will be an extra factor
$$ \frac{t^{\frac{1}{2}}-t^{-\frac{1}{2}}}{t^{N/2}-t^{-N/2}} = \frac{2i\sin\lt(\pi (\frac{1}{N+\frac{1}{2}}) \rt)}{2i\sin\lt(\pi (\frac{N}{N+\frac{1}{2}}) \rt)} \stackrel[N \to \infty]{\sim}{} 2 $$
In particular, the exponent of $(N+\frac{1}{2})$ is $\frac{3}{2}$, which agrees with the result in \cite{HZ07} and \cite{V08}.
\end{remark}

\subsection{The asymptotics expansion formula for $J_{M_1,M_2}(WL,e^{\frac{2\pi i}{N+\frac{1}{2}}})$}\label{WLs}
In this section, we generalize the previous argument to study the asymptotics expansion formula for $J_{M_1,M_2}(WL,e^{\frac{2\pi i}{N+\frac{1}{2}}})$. Recall that the potential functions $\Phi^{\pm(s_1, s_2)} (z_1,z_2)$ are given by

\begin{align*}
\Phi^{\pm(s_1, s_2)} (z_1,z_2) 
&=  \frac{1}{2\pi i} \lt[ \pm \lt(2\pi i \lt( s_1 - 1\rt)\rt) \lt(2\pi i \lt(z_1 -\frac{1}{2} \rt)\rt)  - (2\pi i (s_2-1))(2\pi i z_1 + 2\pi i z_2)  \rt. \\
&\qquad + \Li\lt(e^{2\pi i (s_2-1) - 2\pi i z_1 - 2\pi i z_2}\rt) - \Li\lt( e^{2\pi i (s_2-1) - 2\pi i z_2 }\rt) \\
&\qquad \lt.+ \Li \lt(e^{2\pi i z_2}\rt) - \Li\lt(e^{2\pi iz_1+ 2\pi i z_2}\rt) + \Li\lt(e^{2\pi i z_1}\rt)  \rt]
\end{align*}

From now on, for simiplicity we consider the potential function $\Phi^{+(s_1,s_2)}(z_1,z_2)$. The same arguement works for the potential function $\Phi^{-(s_1,s_2)}(z_1,z_2)$. The following lemma gaurantees the existence of the critical points for the potential functions.
\begin{lemma} There exists a neighborhood $U\subset \CC^2$ of $(1,1) \in \CC^2$ and two holomorphic families of non-degenerate critical points $\{\mathbf{z}^{\pm(s_1,s_2)} \}_{(s_1,s_2) \in U}$ for the families of potential functions $\Phi^{\pm(s_1,s_2)}(z_1,z_2)$ such that $\mathbf{z}^{\pm(1,1)} = \lt(\frac{1}{2}, \frac{1}{4} \rt)$. 
\end{lemma}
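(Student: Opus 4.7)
The plan is to invoke the holomorphic implicit function theorem applied to the map
\[
F^{\pm}\colon (s_1,s_2,z_1,z_2) \;\longmapsto\; \left(\frac{\partial \Phi^{\pm(s_1,s_2)}}{\partial z_1}(z_1,z_2),\ \frac{\partial \Phi^{\pm(s_1,s_2)}}{\partial z_2}(z_1,z_2)\right)
\]
in a neighborhood of the base point $(s_1,s_2,z_1,z_2)=(1,1,\tfrac12,\tfrac14)$. The computation at the end of Section~\ref{WLNN} already shows $F^{\pm}(1,1,\tfrac12,\tfrac14)=0$, and that the $2\times 2$ Jacobian of $F^{\pm}$ with respect to $(z_1,z_2)$ at this point is precisely $\Hess\Phi^{(1,1)}(\tfrac12,\tfrac14)$, whose determinant was found to be $4\pi^2(1+i)\neq 0$.

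First I would verify that $F^{\pm}$ is holomorphic jointly in all four variables on an open neighborhood of the base point. The explicit $\pm$-term $\pm (2\pi i(s_1-1))(2\pi i(z_1-\tfrac12))$ and the term $-(2\pi i(s_2-1))(2\pi i z_1 + 2\pi i z_2)$ are polynomial, while the five dilogarithm terms are compositions of $\Li$ with exponentials of affine functions of $(s_1,s_2,z_1,z_2)$. Since $\Li$ is holomorphic on $\CC\setminus[1,\infty)$, and the five exponential arguments evaluated at $(1,1,\tfrac12,\tfrac14)$ take the values $i,-i,i,-i,-1$, all comfortably avoiding $[1,\infty)$, joint holomorphy near the base point follows. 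Next I would note that the $\pm$-dependent term is linear in $z_1$, so it contributes nothing to the Hessian in $(z_1,z_2)$; hence the Hessian computation performed in Section~\ref{WLNN} for $\Phi^{(1,1)}$ applies verbatim to both $\Phi^{+(1,1)}$ and $\Phi^{-(1,1)}$, yielding the same non-vanishing determinant.

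With both hypotheses in hand, the holomorphic implicit function theorem produces an open neighborhood $U\subset \CC^2$ of $(1,1)$ and unique holomorphic maps $\mathbf{z}^{\pm}\colon U\to\CC^2$ with $\mathbf{z}^{\pm(1,1)}=(\tfrac12,\tfrac14)$ and $F^{\pm}(s_1,s_2,\mathbf{z}^{\pm(s_1,s_2)})=0$ on $U$. Since $\det\Hess\Phi^{\pm(s_1,s_2)}(\mathbf{z}^{\pm(s_1,s_2)})$ depends continuously on $(s_1,s_2)$ and is nonzero at $(1,1)$, we can shrink $U$ if necessary to retain non-degeneracy throughout $U$.

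There is no substantive analytic obstacle here; the lemma is essentially a bookkeeping application of the holomorphic IFT. The only point that requires care, and which I would highlight in the write-up, is the observation that the $s_1$-dependent term of $\Phi^{\pm(s_1,s_2)}$ is linear in $z_1$ so that the Hessian in $(z_1,z_2)$ is independent of the sign choice and of $s_1$; this is what allows the single non-degeneracy computation at $(\tfrac12,\tfrac14)$ to bootstrap simultaneously into two genuinely holomorphic families $\mathbf{z}^{+(s_1,s_2)}$ and $\mathbf{z}^{-(s_1,s_2)}$ on a common neighborhood of $(1,1)$.
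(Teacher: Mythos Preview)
Your argument is correct and is the standard application of the holomorphic implicit function theorem: you verify that the gradient map $F^{\pm}$ vanishes at the base point, that it is jointly holomorphic near that point, and that its Jacobian in the $(z_1,z_2)$-variables---namely $\Hess\Phi^{(1,1)}(\tfrac12,\tfrac14)$---is nonsingular, which was already computed in Section~\ref{WLNN}. This is exactly the hypothesis needed to solve for $\mathbf{z}$ holomorphically in terms of $(s_1,s_2)$.

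The paper also invokes the implicit function theorem, but curiously it computes and checks the nonvanishing of the Jacobian of $G=(\partial_{z_1}F,\partial_{z_2}F)$ with respect to the \emph{parameters} $(s_1,s_2)$ rather than with respect to $(z_1,z_2)$. Strictly speaking that is not the hypothesis required to produce $\mathbf{z}$ as a function of $(s_1,s_2)$; the correct hypothesis is precisely the Hessian nondegeneracy that you cite (and that the paper had already established a few lines earlier). So your write-up is in fact cleaner on this point: you invoke the right Jacobian, and your observation that the $\pm(s_1-1)$ term is linear in $z_1$ explains transparently why the single Hessian computation suffices for both sign choices.
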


\begin{proof} First of all, we pick a small enough neighborhood $U \subset \CC^2$ of the point $(1,1)\in \CC^2$ such that the function $F: U\times B_R \to \CC$ defined by
$$F(s_1,s_2, z_1,z_2) = \Phi^{+(s_1,s_2)}(z_1,z_2)$$
is well-defined and holomorphic. Consider the function $G:U \times B_R \to \CC^2$ given by
$$ G(s_1,s_2,z_1,z_2) = \lt(\frac{\partial}{\partial z_1} F(s_1,s_2, z_1,z_2), \frac{\partial}{\partial z_2} F(s_1,s_2, z_1,z_2)\rt) $$
By direct computation, we have
\begin{align}
\frac{\partial}{\partial z_1} F(s_1,s_2, z_1,z_2)
&= 2\pi i\lt[(s_1 - 1) - (s_2 - 1) \rt] + \log(1-e^{2\pi i (s_2 -1)}e^{-2\pi i z_1 - 2\pi i z_2})  \notag\\
& \qquad + \log(1 - e^{2\pi i z_1 + 2\pi i z_2}) - \log(1 - e^{2\pi i z_1}) \\
\frac{\partial}{\partial z_2} F(s_1,s_2, z_1,z_2) 
&= -  2\pi i  (s_2 - 1) + \log(1-e^{2\pi i (s_2 -1)}e^{-2\pi i z_1 - 2\pi i z_2}) \notag\\
&\qquad + \log(1 - e^{2\pi i z_1 + 2\pi i z_2}) - \log(1 - e^{2\pi i (s_2 - 1) - 2\pi i z_2}) \notag \\
&\qquad - \log(1 - e^{2\pi i z_2}) 
\end{align}
Moreover,
\begin{align}
\frac{\partial}{\partial s_1} \lt( \frac{\partial}{\partial z_1} F(s_1,s_2, z_1,z_2) \rt)
&= 2\pi i  \\
\frac{\partial}{\partial s_2} \lt( \frac{\partial}{\partial z_1} F(s_1,s_2, z_1,z_2) \rt)
&= 0 \\
\frac{\partial}{\partial s_2} \lt( \frac{\partial}{\partial z_2} F(s_1,s_2, z_1,z_2) \rt)
&= -2\pi i - \frac{2\pi i e^{2\pi i (s_2 -1)}e^{-2\pi i z_1 - 2\pi i z_2}}{1-e^{2\pi i (s_2 -1)}e^{-2\pi i z_1 - 2\pi i z_2}} + \frac{2\pi i e^{2\pi i (s_2 -1)}e^{- 2\pi i z_2}}{1-e^{2\pi i (s_2 -1)}e^{- 2\pi i z_2}} 
\end{align}
As a result, we have
\begin{align}
\begin{vmatrix}
\frac{\partial}{\partial s_1} \lt( \frac{\partial}{\partial z_1} F \rt)
&
\frac{\partial}{\partial s_2} \lt( \frac{\partial}{\partial z_1} F \rt) \\
\frac{\partial}{\partial s_1} \lt( \frac{\partial}{\partial z_2} F \rt) 
&
\frac{\partial}{\partial s_2} \lt( \frac{\partial}{\partial z_2} F\rt)
\end{vmatrix}
\lt(1,1,\frac{1}{2}, \frac{1}{4}\rt)
= -2\pi i (1+i) 
\neq 0
\end{align}
The result follows from the implicit function theorem.
\end{proof}

Note that on the surfaces $S_1^+$, from (\ref{S1}) we have
\begin{align}
\Re\Phi^{(1,1)}(x_1 \pm \eta i, x_2) - \Re\Phi^{(1,1)}(\mathbf{z}^{(1,1)}) 
< 2\pi \eta
\end{align}
Since $S_1^+$ is a compact set, by continuity, we may choose $\delta$ small enough such that 
\begin{align*}
\Re\Phi^{(s_1,s_2)}(x_1 + \eta i, x_2) - \Re\Phi^{(s_1,s_2)}(\mathbf{z}^{+(s_1,s_2)}) 
< 2\pi \eta
\end{align*}
for any $1-\delta<s_1,s_2 \leq 1$. The same argument works for $S_1^-, S_2^+$ and $S_2^-$.

As a result, the assumptions for applying the Poisson Summation formula are satisfied. Finally, for the same reason, we can choose $\delta>0$ small enough such that the conditions for applying the saddle point approximation are automatically satisfied. Together with Lemma~\ref{EM_2}, the asymptotic expansion formula of $J_{M_1,M_2}(WL,e^{\frac{2\pi i}{N+\frac{1}{2}}})$ is given by

\begin{align}
&\quad J_{M_1, M_2} (WL, e^{\frac{2\pi i}{N+\frac{1}{2}}}) \notag  \\
&= \frac{ (-1)^{(M_1 - N) }  e^{\frac{2\pi i}{N+\frac{1}{2}}(M_2^2 - \frac{M_2}{2} - \frac{1}{2})}}{2\sin(\frac{\pi}{N+\frac{1}{2}})} \exp\lt(  -\varphi_r\lt(  \frac{\pi}{2(N+\frac{1}{2})}\rt)\rt)\notag \\
&\qquad  \lt[e^{\pi i \lt(\frac{M_1}{N+\frac{1}{2}} - 1\rt) }\sum_{n=0}^{M_2-1} \sum_{l=0}^{M_2-1-n} \exp\lt( \lt(N+\frac{1}{2}\rt)\Phi^{+(s_1,s_2)}_{M_1,M_2} \lt(\frac{n}{N+\frac{1}{2}},\frac{l}{N+\frac{1}{2}}\rt) \rt) \rt. \notag \\
&\qquad \quad \lt. + e^{-\pi i \lt(\frac{M_1}{N+\frac{1}{2}} - 1\rt) }\sum_{n=0}^{M_2-1} \sum_{l=0}^{M_2-1-n} \exp\lt( \lt(N+\frac{1}{2}\rt)\Phi^{-(s_1,s_2)}_{M_1,M_2} \lt(\frac{n}{N+\frac{1}{2}},\frac{l}{N+\frac{1}{2}}\rt) \rt) \rt] \notag \\
&\stackrel[N \to \infty]{\sim}{}  \frac{ e^{-\frac{\pi i}{4}} (-1)^{(M_1 - N) }  e^{\frac{2\pi i}{N+\frac{1}{2}}(M_2^2 - \frac{M_2}{2} - \frac{1}{2})}}{2\pi} \lt(N+\frac{1}{2}\rt)^{\frac{1}{2}}\exp\lt( \frac{N+\frac{1}{2}}{2\pi } \frac{\pi^2 i}{6} \rt)\lt(N+\frac{1}{2}\rt)^2   \notag  \\
& \qquad\qquad\lt[ e^{\pi i (\frac{M_1}{N+\frac{1}{2}} -1)} \int\int_{B_R \cap \Delta}  E_{M_2}(z_1,z_2)\exp \lt( \lt(N+\frac{1}{2}\rt) \Phi^{+\lt(\frac{M_1}{N+\frac{1}{2}}, \frac{M_2}{N+\frac{1}{2}}\rt)} (z_1, z_2) \rt) dz_1 dz_2 \rt. \notag  \\
& \qquad\qquad\qquad \lt.  + e^{-\pi i (\frac{M_1}{N+\frac{1}{2}} -1)} \int\int_{B_R \cap \Delta}  E_{M_2}(z_1,z_2)\exp \lt( \lt(N+\frac{1}{2}\rt) \Phi^{-\lt(\frac{M_1}{N+\frac{1}{2}}, \frac{M_2}{N+\frac{1}{2}}\rt)} (z_1, z_2) \rt) dz_1 dz_2 \rt] \notag  \\
&\stackrel[N \to \infty]{\sim}{}  \frac{e^{-\frac{\pi i}{4}} (-1)^{(M_1 - N) }  e^{\frac{2\pi i}{N+\frac{1}{2}}(M_2^2 - \frac{M_2}{2} - \frac{1}{2})}}{2} \exp\lt( \frac{N+\frac{1}{2}}{2\pi } \frac{\pi^2 i}{6} \rt) \lt(N+\frac{1}{2}\rt)^\frac{3}{2}   \notag  \\
& \qquad\quad\lt[ e^{\pi i \lt(\frac{M_1}{N+\frac{1}{2}} -1\rt)}E_{M_2}(\mathbf{z}^{+(s_1,s_2)}_{M_1,M_2} ) \frac{ \exp\lt( \lt(N+\frac{1}{2}\rt) \Phi^{+\lt(\frac{M_1}{N+\frac{1}{2}}, \frac{M_2}{N+\frac{1}{2}}\rt)} \lt(\mathbf{z}^{+\lt(\frac{M_1}{N+\frac{1}{2}}, \frac{M_2}{N+\frac{1}{2}}\rt)} \rt) \rt)}{ \sqrt{-\det\Hess \Phi^{+\lt(\frac{M_1}{N+\frac{1}{2}}, \frac{M_2}{N+\frac{1}{2}}\rt)} \lt(\mathbf{z}^{+\lt(\frac{M_1}{N+\frac{1}{2}}, \frac{M_2}{N+\frac{1}{2}}\rt)}\rt)}}  \rt. \notag\\
&\qquad\qquad\qquad \lt.  + e^{-\pi i \lt(\frac{M_1}{N+\frac{1}{2}} -1\rt)}E_{M_2}(\mathbf{z}^{-(s_1,s_2)}_{M_1,M_2} )  \frac{ \exp\lt( \lt(N+\frac{1}{2}\rt) \Phi^{-\lt(\frac{M_1}{N+\frac{1}{2}}, \frac{M_2}{N+\frac{1}{2}}\rt)} \lt(\mathbf{z}^{-\lt(\frac{M_1}{N+\frac{1}{2}}, \frac{M_2}{N+\frac{1}{2}}\rt)} \rt) \rt)}{ \sqrt{-\det\Hess \Phi^{-\lt(\frac{M_1}{N+\frac{1}{2}}, \frac{M_2}{N+\frac{1}{2}}\rt)} \lt(\mathbf{z}^{-\lt(\frac{M_1}{N+\frac{1}{2}}, \frac{M_2}{N+\frac{1}{2}}\rt)} \rt)}}  \rt] ,
\end{align}
where the last line holds because those terms do not cancel out with each other. 
In fact, at the end of Section~\ref{diffsame}, we will show that for $s_1,s_2$ sufficiently close to $1$,
$$\Phi^{+(s_1,s_2)}(\mathbf{z}^{+(z_1,z_2)}) = 
\Phi^{-(s_1,s_2)}(\mathbf{z}^{-(z_1,z_2)}) $$
and
$$\Re\Phi^{+(s_1,s_2)}(\mathbf{z}^{+(z_1,z_2)})
= \Re\Phi^{-(s_1,s_2)}(\mathbf{z}^{-(z_1,z_2)})
= \Vol\lt(\SS^3 \backslash WL, u_1 = 2\pi i (1 - s_1), v_2=4\pi i (1 - s_2)\rt),$$
where $\Vol\lt(\SS^3 \backslash WL, u_1 = 2\pi i (1 - s_1), v_2=4\pi i (1 - s_2)\rt)  $ is the hyperbolic volume of the Whitehead link complement equipped with the incomplete hyperbolic structure such that the logarithm of the holonomy of the meridian of the belt and the logarithm of the holonomy of the longitude of the clasp are $u_1 = 2\pi i (1 - s_1)$ and $v_2=4\pi i (1 - s_2)$ respectively. As a result, by choosing $\delta>0$ sufficiently small, for any $1-\delta < s_1,s_2 \leq 1$, we have

\begin{align}
 J_{M_1, M_2} (WL, e^{\frac{2\pi i}{N+\frac{1}{2}}})
&\stackrel[N \to \infty]{\sim}{}  \frac{e^{-\frac{\pi i}{4}} (-1)^{(M_1 - N) }  e^{\frac{2\pi i}{N+\frac{1}{2}}(M_2^2 - \frac{M_2}{2} - \frac{1}{2})}}{2} \exp\lt( \frac{N+\frac{1}{2}}{2\pi } \frac{\pi^2 i}{6} \rt) \lt(N+\frac{1}{2}\rt)^\frac{3}{2}   \notag  \\
& \qquad\quad \lt[ \frac{e^{\pi i \lt(\frac{M_1}{N+\frac{1}{2}} -1\rt)}E_{M_2}(\mathbf{z}^{+(s_1,s_2)}_{M_1,M_2} )}{ \sqrt{-\det\Hess \Phi^{+\lt(\frac{M_1}{N+\frac{1}{2}}, \frac{M_2}{N+\frac{1}{2}}\rt)} \lt(\mathbf{z}^{+\lt(\frac{M_1}{N+\frac{1}{2}}, \frac{M_2}{N+\frac{1}{2}}\rt)}\rt)}}  \rt.\notag\\
& \lt.\qquad\quad +
\frac{e^{-\pi i \lt(\frac{M_1}{N+\frac{1}{2}} -1\rt)}E_{M_2}(\mathbf{z}^{-(s_1,s_2)}_{M_1,M_2} )}{ \sqrt{-\det\Hess \Phi^{-\lt(\frac{M_1}{N+\frac{1}{2}}, \frac{M_2}{N+\frac{1}{2}}\rt)} \lt(\mathbf{z}^{-\lt(\frac{M_1}{N+\frac{1}{2}}, \frac{M_2}{N+\frac{1}{2}}\rt)}\rt)}} \rt]\notag\\
&\qquad \qquad \exp\lt( \lt(N+\frac{1}{2}\rt) \Phi^{+\lt(\frac{M_1}{N+\frac{1}{2}}, \frac{M_2}{N+\frac{1}{2}}\rt)} \lt(\mathbf{z}^{+\lt(\frac{M_1}{N+\frac{1}{2}}, \frac{M_2}{N+\frac{1}{2}}\rt)} \rt) \rt)
\end{align}

\subsection{Geometry of the potential function}\label{geoWL}

To understand the critical values of the potential functions, we need to find a concrete triangulation of the link complement associated to the potential function. 
The triangulation is the one appeared in \cite{T79}. First of all, we prepare an ideal octahedra (Figure~\ref{f1}). The idea is to glue the two blue faces together and the two red faces together (Figure \ref{f2}) to form a cylinder. \\

          \begin{figure}[H]
          \centering
              \includegraphics[width=0.7\linewidth]{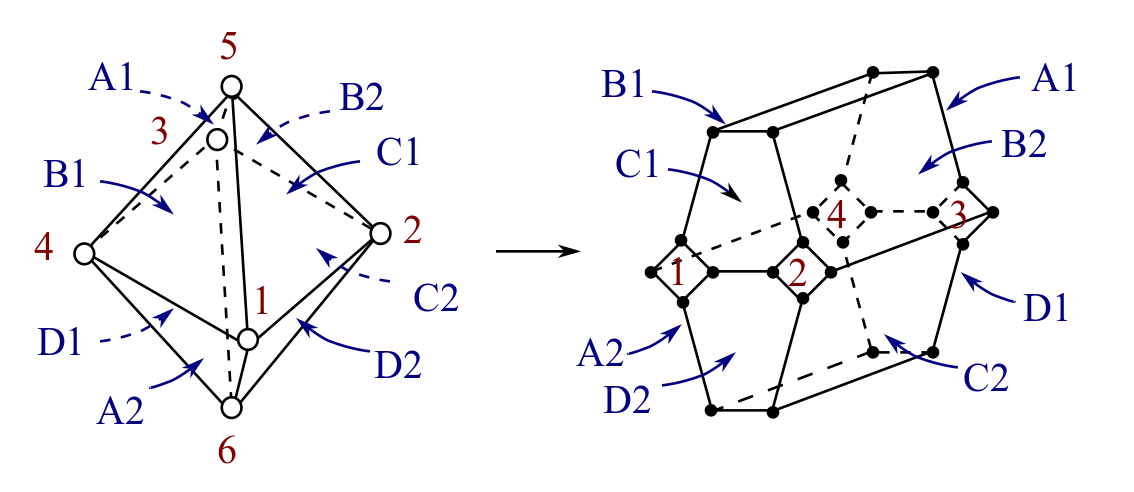}
              \caption{ideal octahedron}\label{f1}
          \end{figure}
          
          \begin{figure}[H]
          \centering
              \includegraphics[width=0.4\linewidth]{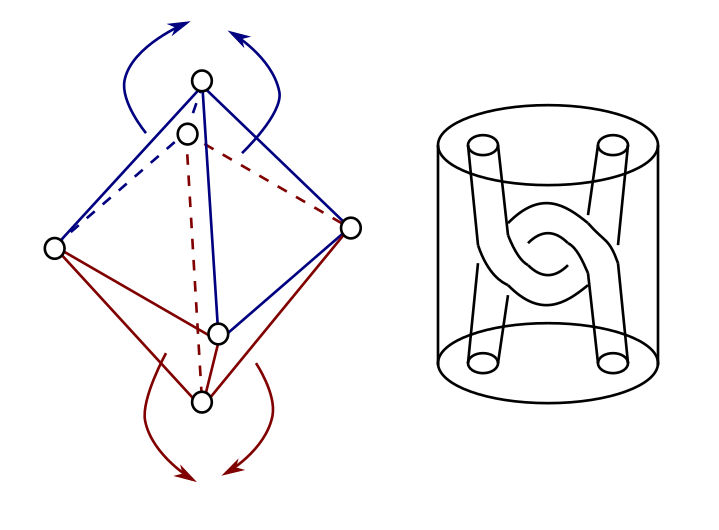}
              \caption{glue the faces with the same color together}\label{f2}
          \end{figure}

The following pictures illustrate how to obtain a cylinder from an ideal octahedron.\\
\begin{minipage}{\linewidth}
      \centering
      \begin{minipage}{0.45\linewidth}
          \begin{figure}[H]
              \includegraphics[width=\linewidth]{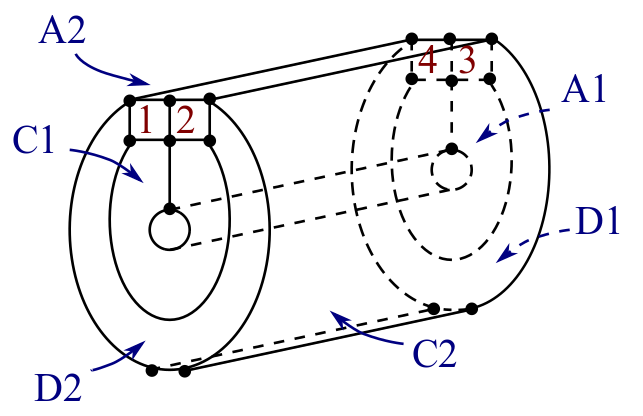}
              \caption{the faces $B_1$ and $B_2$ are glued together}
          \end{figure}
      \end{minipage}
      \hspace{0.05\linewidth}
      \begin{minipage}{0.45\linewidth}
          \begin{figure}[H]
              \includegraphics[width=\linewidth]{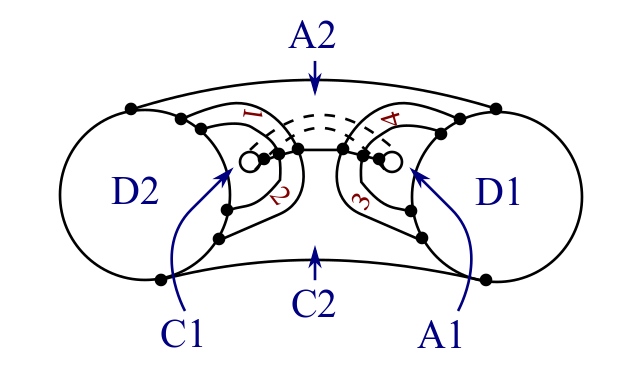}
              \caption{deform the object so that it looks like a cylinder}
          \end{figure}
      \end{minipage}
\end{minipage}
\begin{minipage}{\linewidth}
      \centering
      \begin{minipage}{0.45\linewidth}
          \begin{figure}[H]
              \includegraphics[width=\linewidth]{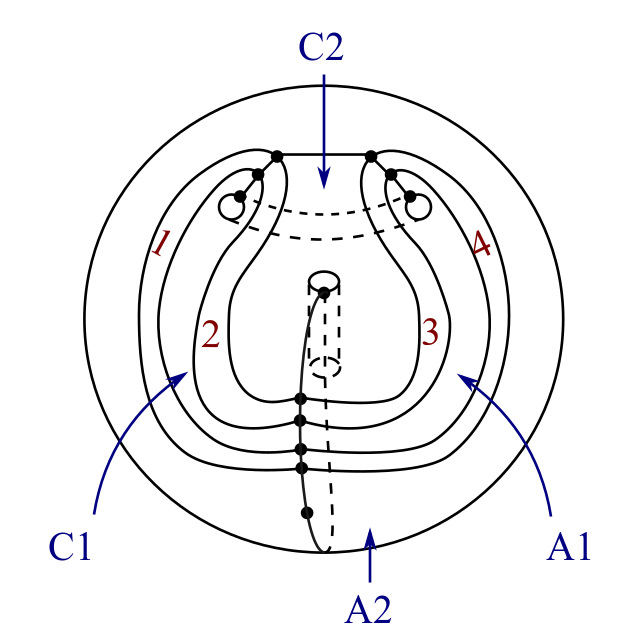}
              \caption{the faces $D_1$ and $D_2$ are glued together}
          \end{figure}
      \end{minipage}
            \hspace{0.05\linewidth}
            \begin{minipage}{0.4\linewidth}
          \begin{figure}[H]
              \includegraphics[width=\linewidth]{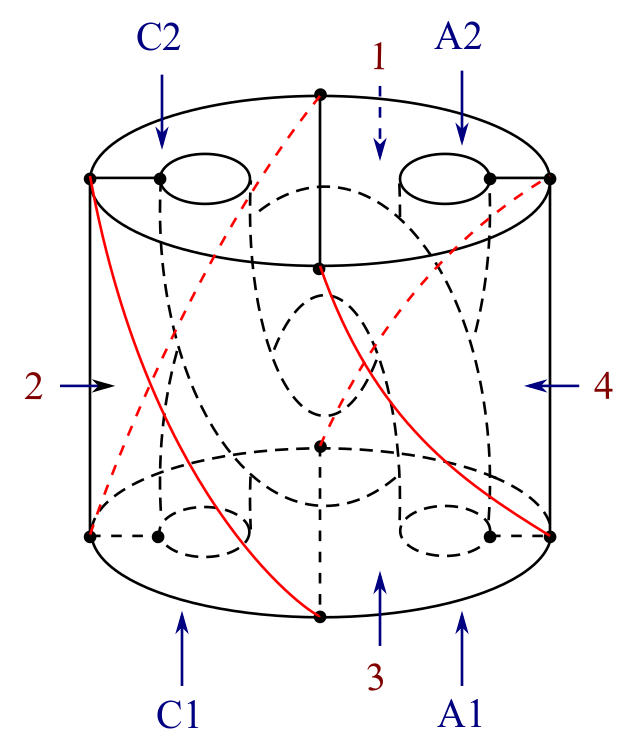}
              \caption{deform the object to obtain a cylinder with a clasp removed}
          \end{figure}
      \end{minipage}
  \end{minipage}\\

Next, we consider the curl face of the cylinder. The labels $1,2,3,4$ refer to the truncated faces of the ideal octahedron. If we glue the top of the cylinder to the bottom according to the rule $C_2 \to C_1$ and $A_2 \to A_1$, we will obtain a decomposition of the boundary torus into parallelograms. Note that this torus is exactly the boundary torus of a tubular neighborhood of the belt component of the Whitehead link.\\ 
          \begin{figure}[H]
          \centering
              \includegraphics[width=\linewidth]{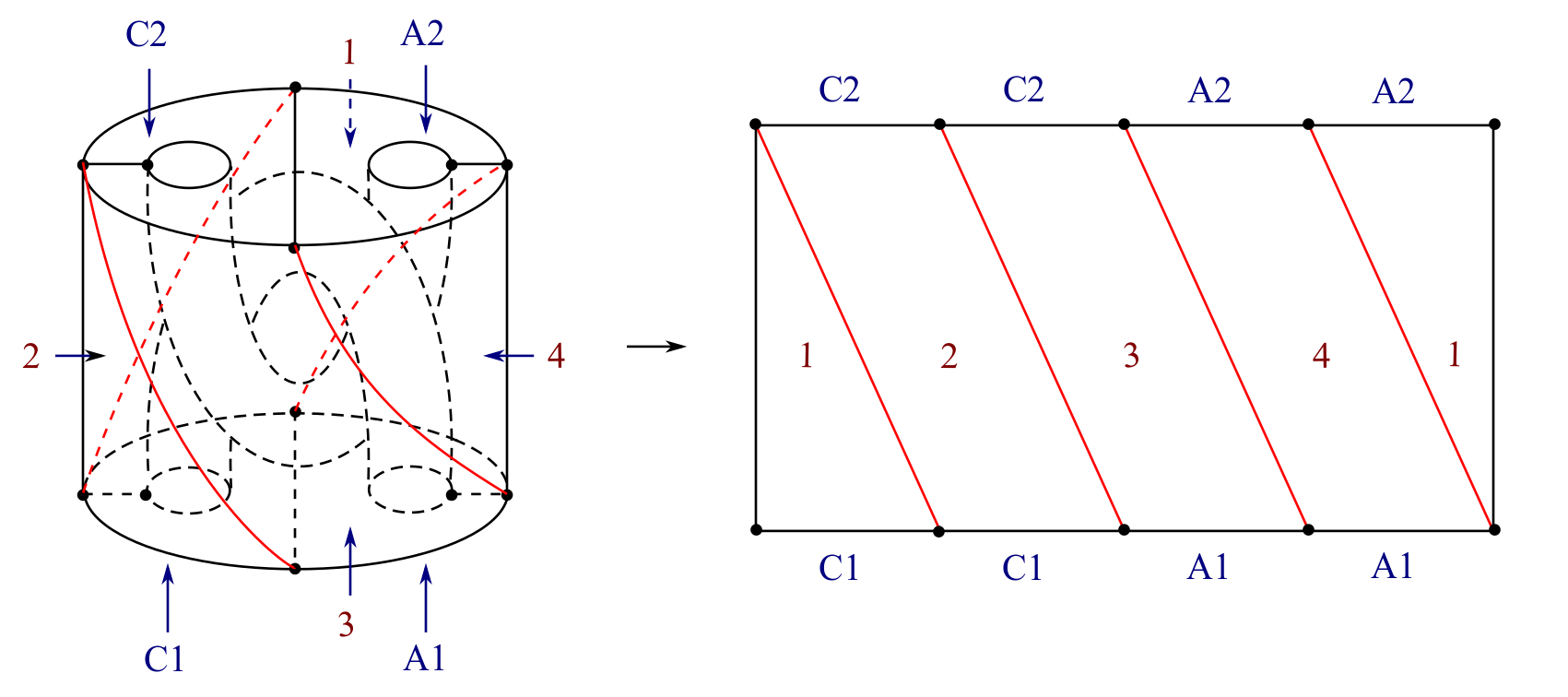}
              \caption{decomposition of the curl face into parallelograms}
          \end{figure}
 
Next, we decompose the ideal octahedron into 5 ideal tetrahedra and assign shape parameters to them (Figure \ref{f8}).  The parameters $U,Z$ and $W$ are related by $U=(ZW)^{-1}$. The decomposition of the truncated faces and the assignments of shape parameters to them are shown in Figure \ref{f9}.\\

          \begin{figure}[H]
                \centering
              \includegraphics[width=0.9\linewidth]{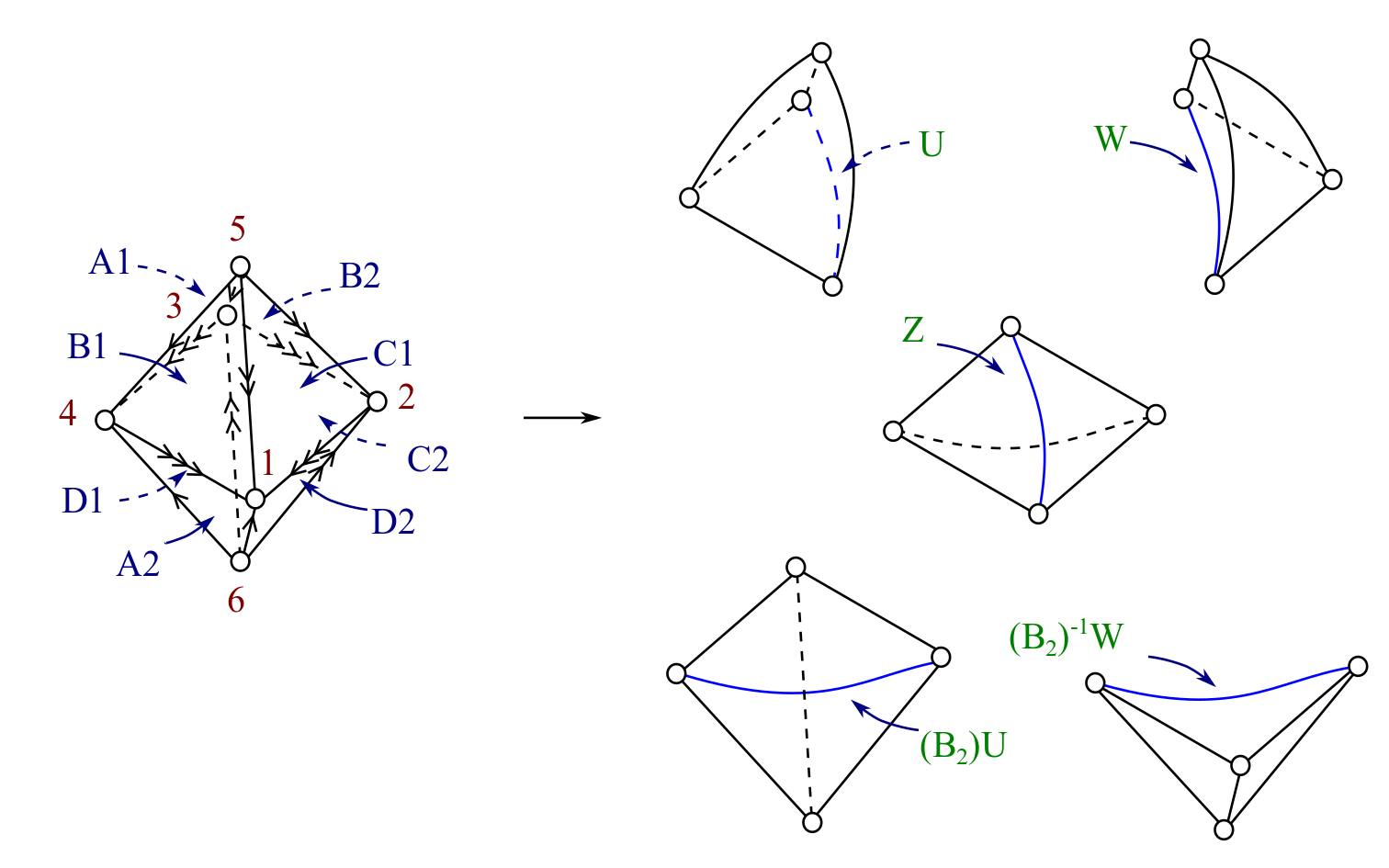}
              \caption{decomposition of the ideal octahedron into 5 ideal tetrahedra}\label{f8}
          \end{figure}
        
          \begin{figure}[H]
                \centering
              \includegraphics[width=\linewidth]{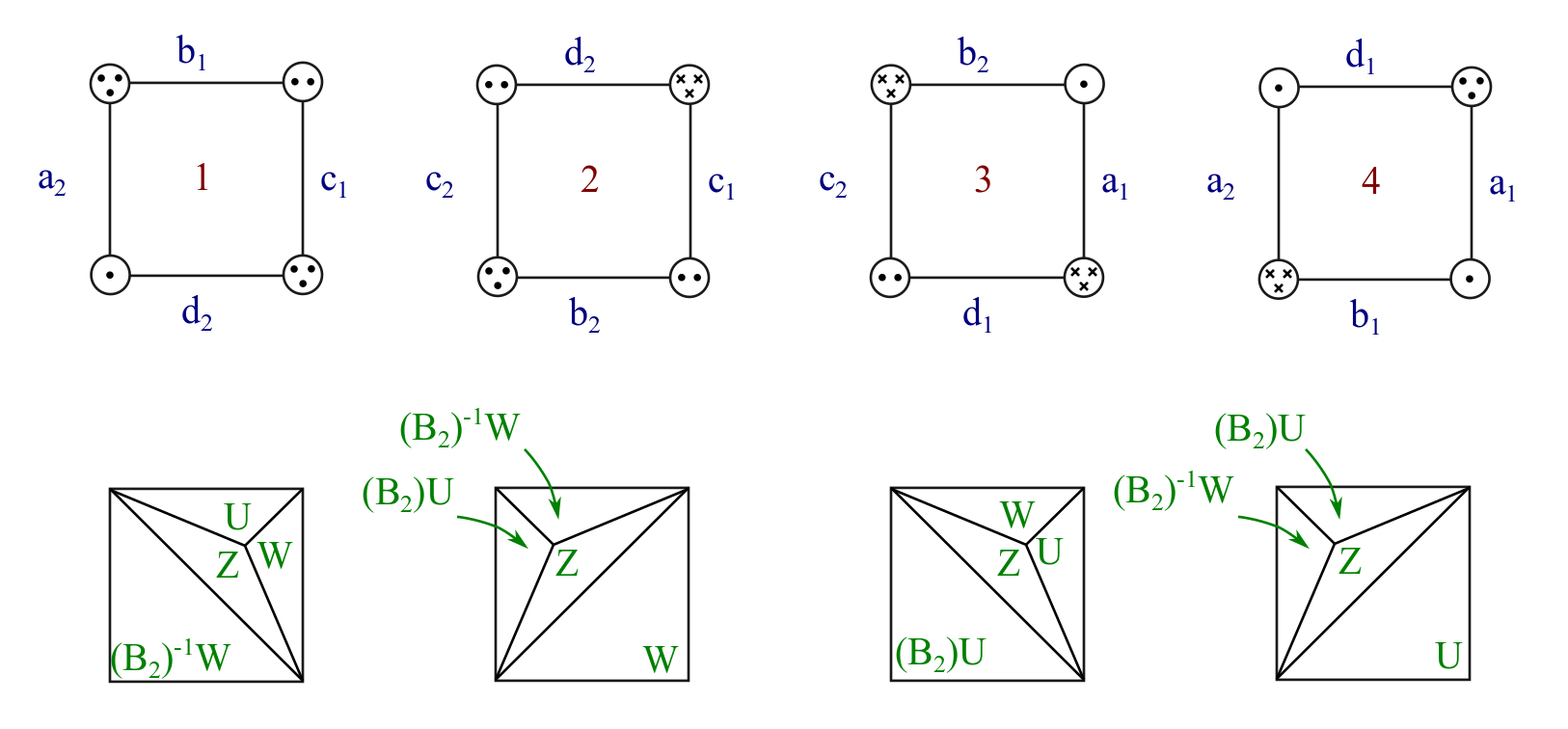}\label{f9}
              \caption{assignment of shape parameters}
              \label{f9}
          \end{figure}

After that, we slide the left edge down (Figure \ref{f10}) to obtain the geometric triangulation of the curl surface.\\

          \begin{figure}[H]
          \centering
              \includegraphics[width=0.8\linewidth]{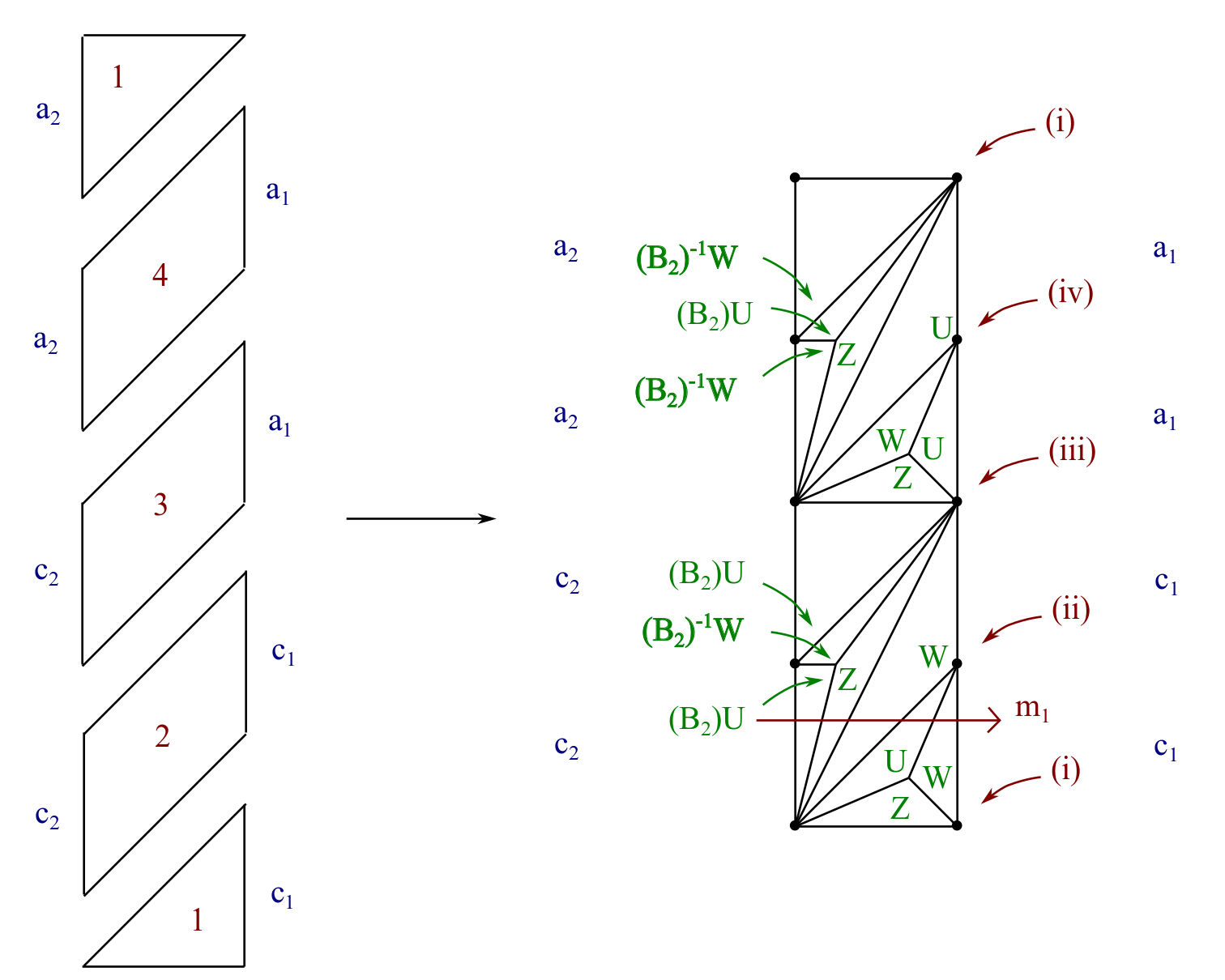}
              \caption{triangulation of the boundary torus of the belt component}\label{f10}
          \end{figure}

Recall that for each triangle, the shape parameters are given in counter-clockwise by $A$, $A' = \frac{1}{1-A}$ and $A'' = 1 - \frac{1}{A}$ respectively. From direct computation, for the boundary torus of the belt component, the edges equations are given by
\begin{itemize}
\item vertex (i): 
\begin{align}
[W'Z''(B_2^{-1}W)'(B_2U)'Z''U'][(B_2^{-1}W)''Z'U'W''Z'(B_2U)'']
&= \frac{-1}{W}\frac{-1}{Z}\frac{-1}{Z}\frac{-1}{B_2^{-1}W}\frac{-1}{B_2U}\frac{-1}{U}=1 
\end{align}
\item vertex (ii): 
\begin{align}
(WU'W'') ( (B_2U)' (B_2^{-1}W)'' (B_2U) )
= \lt(\frac{U'}{W'} \rt) \lt( \frac{(B_2U)''}{(B_2^{-1}W)''} \rt)^{-1}
= \frac{B_2\lt(1-\dfrac{B_2}{W}\rt)(1- W)}{(1-B_2U)\lt(1-\dfrac{1}{U}\rt)} \label{beltv2}
\end{align}
\item vertex (iii): 
\begin{align}
[W'Z''(B_2^{-1}W)'(B_2U)'Z''U'][(B_2^{-1}W)''Z'U'W''Z'(B_2U)'']
&= \frac{-1}{W}\frac{-1}{Z}\frac{-1}{Z}\frac{-1}{B_2^{-1}W}\frac{-1}{B_2U}\frac{-1}{U}=1
\end{align}
\item vertex (iv): 
\begin{align}
(UW'U'') ( (B_2^{-1}W)' (B_2U)'' (B_2^{-1}W) )
= \lt[\lt(\frac{U'}{W'} \rt) \lt( \frac{(B_2U)''}{(B_2^{-1}W)''} \rt)^{-1}\rt]^{-1}\
= \frac{(1-B_2U)\lt(1-\dfrac{1}{U}\rt)}{B_2\lt(1-\dfrac{B_2}{W}\rt)(1- W)} \label{beltv4}
\end{align}
\end{itemize}

For the top and bottom truncated faces, we have\\

          \begin{figure}[H]
          \centering
              \includegraphics[width=0.8\linewidth]{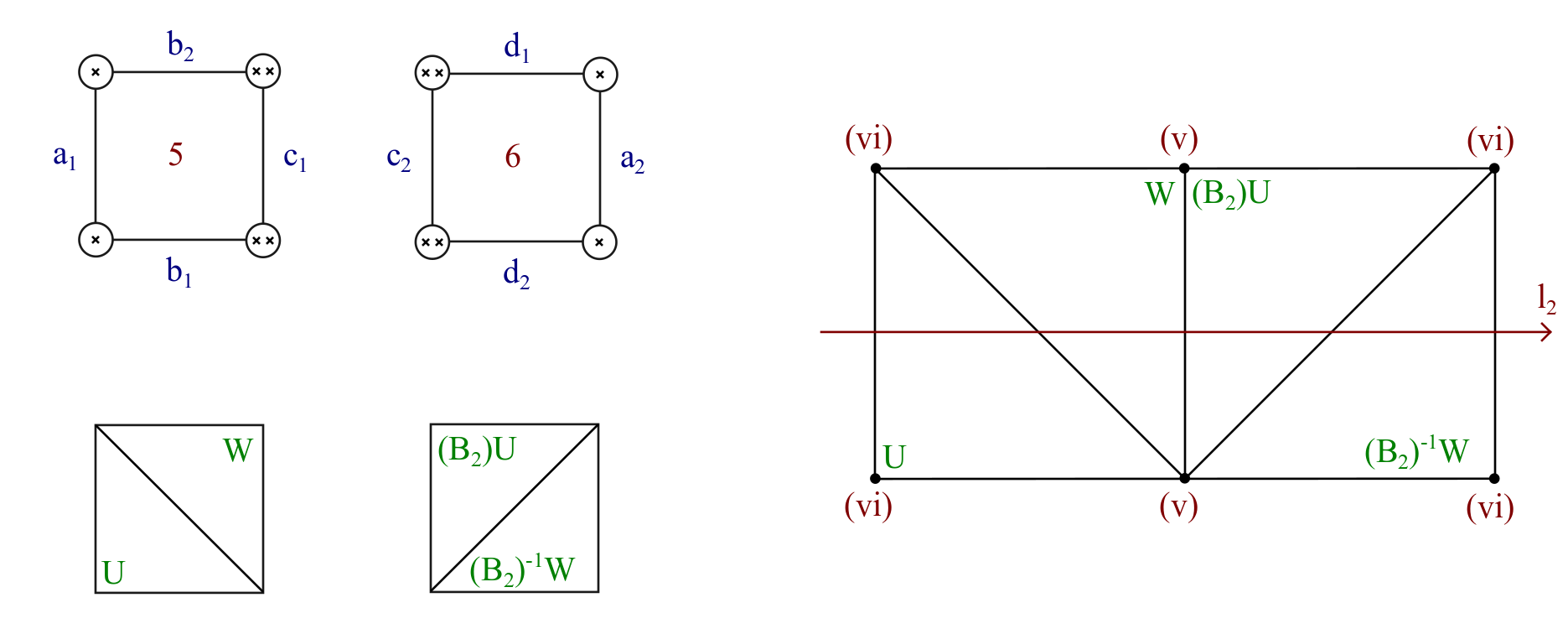}
              \caption{triangulation of the boundary torus of the belt component}\label{f11}
          \end{figure}
      
From direct computation, for the boundary torus of the clasp component, the edges equations are given by
\begin{itemize}
\item vertex (v): 
\begin{align}
(W) (B_2U)(B_2^{-1}W)''  (B_2U)' (W)'' (U') 
= \lt(\frac{U'}{W'} \rt) \lt( \frac{(B_2U)''}{(B_2^{-1}W)''} \rt)^{-1}
= \frac{B_2\lt(1-\dfrac{B_2}{W}\rt)(1- W)}{(1-B_2U)\lt(1-\dfrac{1}{U}\rt)}  \label{claspv1}
\end{align}
\item vertex (vi): 
\begin{align}
(B_2^{-1}W) (B_2U)''(W')  (U)'' (B_2^{-1}W)' (U) 
= \lt[\lt(\frac{U'}{W'} \rt) \lt( \frac{(B_2U)''}{(B_2^{-1}W)''} \rt)^{-1}\rt]^{-1}
= \lt[ \frac{B_2\lt(1-\dfrac{B_2}{W}\rt)(1- W)}{(1-B_2U)\lt(1-\dfrac{1}{U}\rt)} \rt]^{-1}\label{claspv2}
\end{align}
\end{itemize}

Now we can explore the correspondence between the critical point equations of the potential function and the edges equations of this triangulation. For $\Phi^{+(s_1,s_2)}(z_1,z_2)$, recall that the critical point equations are given by
\begin{empheq}[left = \empheqlbrace]{align}
&\quad \log(1-e^{2\pi i (s_2 -1)}e^{-2\pi i z_1 - 2\pi i z_2}) + \log(1 - e^{2\pi i z_1 + 2\pi i z_2}) - \log(1 - e^{2\pi i z_1}) \notag \\
&=  2\pi i\lt[-(s_1 - 1) + (s_2 - 1) \rt] \label{smeridian1}\\ \notag \\
&\quad \log(1-e^{2\pi i (s_2 -1)}e^{-2\pi i z_1 - 2\pi i z_2}) + \log(1 - e^{2\pi i z_1 + 2\pi i z_2}) - \log(1 - e^{2\pi i (s_2 - 1) - 2\pi i z_2})\notag\\
&\quad  - \log(1 - e^{2\pi i z_2}) \label{slong1}  \notag \\
&=  2\pi i  (s_2 - 1)
\end{empheq}

Put $Z=e^{2\pi i z_1}$, $W=e^{2\pi i z_2}$, $U= (ZW)^{-1}$, $B_1 = e^{2\pi i s_1}$ and $B_2 = e^{2\pi i s_2}$. After taking exponential, we have
\begin{empheq}[left = \empheqlbrace]{align}
\frac{\lt(1 - B_2 U \rt)\lt( 1 - \dfrac{1}{U}\rt)}{\lt( 1-Z \rt)} &= B_1^{-1}  B_2 \label{smeridian2} \\
\frac{(1-B_2U)\lt(1-\dfrac{1}{U}\rt)}{(1-\dfrac{B_2}{W})(1- W)} &= B_2 \label{slong2}
\end{empheq}

In particular, from $(\ref{slong2})$ we have
$$ \lt(\frac{U'}{W'} \rt) \lt( \frac{(B_2U)''}{(B_2^{-1}W)''} \rt)^{-1} = 1$$
i.e. the edge equations of the triangulation~(\ref{beltv2}), (\ref{beltv4}), (\ref{claspv1}) and (\ref{claspv2}) are satisfied. Furthermore, by (\ref{smeridian2}), the holonomy of the meridian of the belt component (Figure \ref{f10}) is given by
\begin{align}
m_1 = \frac{W''  U'}{W'' Z' (B_2 U)''} =\frac{B_2\lt( 1-Z \rt)}{\lt(1 - B_2 U \rt)\lt( 1 - \dfrac{1}{U}\rt)} = B_1 
\end{align}

By (\ref{slong2}), the holonomy of the longitude of the clasp component (Figure \ref{f11}) is given by
\begin{align}\label{slong2B2}
l_2 = \quad \frac{(B_2^{-1}W)'}{(B_2U)'} \cdot \frac{U''}{W''} =  \frac{(1-B_2U)\lt(1-\dfrac{1}{U}\rt)}{\lt(1-\dfrac{B_2}{W}\rt)(1- W)} \times B_2 = B_2^2
\end{align}

Finally, for the potential function $\Phi^{-(s_1,s_2)}(z_1,z_2)$, by replacing $B_1$ by $B_1^{-1}$ in the above, we have the same correspondence between critical point equations of the potential function and the hyperbolic gluing equation for the triangulation of the link complement.

\subsection{Differential formula for the potential function}\label{diffsame}
After we associate a triangulation of the link complement to the potential functions, we need to prove that the critical values of the potential functions $\Phi^{\pm(s_1,s_2)}(\mathbf{z}^{\pm(z_1,z_2)})$ are indeed the sum of the hyperbolic volume of the tetrahedra. This can be done by using the following differential formula satisfied by the potential functions.
\begin{lemma}
Write $z_k = x_k + i y_k$ for $k=1,2$. The real part of the potential function satisfies the following differential equation:
\begin{align} 
\Re \Phi^{\pm(s_1,s_2)}(z_1,z_2) = \frac{1}{2\pi} V^{\pm(s_1,s_2)}(z_1,z_2) + \sum_{k=1}^2 y_k \frac{\partial}{\partial y_k} \Re \Phi^{\pm(s_1,s_2)}(z_1,z_2),
\end{align}
where 
\begin{align}
V^{\pm(s_1,s_2)}(z_1,z_2) 
&= D(e^{2\pi i(s_2-1) - 2\pi i z_1 - 2\pi i z_2}) - D(e^{2\pi i (s_2-1) - 2\pi i z_2}) + D(e^{2\pi i z_2}) \notag \\
&\qquad - D(e^{2\pi i z_1 + 2\pi i z_2}) + D(e^{2\pi i z_1}) 
\end{align}
and $D(z)$ is the Bloch-Wigner function
\begin{align}
D(z) = \im\Li(z) + \log|z|\Arg(1-z)
\end{align}
\end{lemma}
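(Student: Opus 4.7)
The plan is to verify the formula term by term, since the potential function decomposes into a polynomial piece and five dilogarithm pieces, and the identity is linear in the summands. The core of the argument is a single calculation showing that each building block of the form $\tfrac{1}{2\pi i}\Li(w)$, with $w=e^{2\pi i\alpha+2\pi i\sum_k c_k z_k}$ for some $\alpha\in\RR$ and real coefficients $c_k$, satisfies the identity
\begin{align*}
\Re\frac{1}{2\pi i}\Li(w)=\frac{1}{2\pi}D(w)+\sum_{k=1}^2 y_k\,\frac{\partial}{\partial y_k}\Re\frac{1}{2\pi i}\Li(w).
\end{align*}

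To establish this, I would compute both sides explicitly. Writing $z_k=x_k+iy_k$, one has $|w|=e^{-2\pi\sum_k c_k y_k}$, so $\log|w|=-2\pi\sum_k c_k y_k$. Since $\Re\bigl(\tfrac{1}{2\pi i}\Li(w)\bigr)=\tfrac{1}{2\pi}\im\Li(w)$, and since the Cauchy--Riemann equations give $\frac{\partial\im f}{\partial y_k}=\Re\frac{\partial f}{\partial z_k}$ for any holomorphic $f$, I apply this to $f(z)=\Li(w)$ using $\Li'(w)=-\log(1-w)/w$ and the chain rule. A short computation yields $\frac{\partial}{\partial z_k}\Li(w)=-2\pi i c_k\log(1-w)$, and taking real parts gives $\frac{\partial}{\partial y_k}\im\Li(w)=2\pi c_k\Arg(1-w)$. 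Substituting into the right-hand side,
\begin{align*}
\sum_k y_k\,\frac{\partial}{\partial y_k}\Re\frac{1}{2\pi i}\Li(w)=\Bigl(\sum_k c_k y_k\Bigr)\Arg(1-w)=-\frac{\log|w|}{2\pi}\Arg(1-w),
\end{align*}
and rearranging with the definition $D(w)=\im\Li(w)+\log|w|\Arg(1-w)$ produces precisely the claimed identity.

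The remaining work is bookkeeping. The polynomial part of $\Phi^{\pm(s_1,s_2)}(z_1,z_2)$, namely $A=\pm 2\pi i(s_1-1)(z_1-\tfrac12)-2\pi i(s_2-1)(z_1+z_2)$, has real part linear in $y_1,y_2$ alone (using $s_1,s_2\in\RR$, which holds because they are limits of ratios of positive integers over $N+\tfrac12$), so Euler's homogeneous-function identity gives $\Re A=\sum_k y_k\,\partial_{y_k}\Re A$; this term therefore contributes nothing to $V^{\pm(s_1,s_2)}$, as expected. Applying the per-term identity to the five dilogarithm summands of $\Phi^{\pm(s_1,s_2)}$, with arguments $e^{2\pi i(s_2-1)-2\pi iz_1-2\pi iz_2}$, $e^{2\pi i(s_2-1)-2\pi iz_2}$, $e^{2\pi iz_2}$, $e^{2\pi iz_1+2\pi iz_2}$, $e^{2\pi iz_1}$ with signs $+,-,+,-,+$, and adding them up with the polynomial contribution gives the formula in the statement. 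The only subtlety is keeping the signs of the five Bloch--Wigner terms consistent; there is no serious obstacle, since the argument is a direct application of the Cauchy--Riemann equations and the definition of $D$.
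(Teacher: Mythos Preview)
Your proposal is correct and follows essentially the same route as the paper: verify the identity term by term, using the Cauchy--Riemann equations to compute $\partial_{y_k}\Re$ of each dilogarithm piece and then invoke the definition of the Bloch--Wigner function, while checking separately that the linear-in-$z$ part of $\Phi^{\pm(s_1,s_2)}$ satisfies the identity with zero $D$-contribution. The only cosmetic difference is that you treat all five dilogarithm terms at once via the generic form $w=e^{2\pi i\alpha+2\pi i\sum_k c_k z_k}$, whereas the paper first does the single-variable cases $e^{\pm 2\pi i z}$ and then remarks that the two-variable terms follow ``by the same arguments''; this is a presentational streamlining rather than a genuinely different method.
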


\begin{proof}
First of all, for $z = x + i y$, since
\begin{align*}
\frac{d}{dz} \frac{\Li (e^{2\pi i z})}{2\pi i} &= -\log(1-e^{2\pi i z}) 
\end{align*}
we have
\begin{align*}
\im \frac{d}{dz} \frac{\Li (e^{2\pi i z})}{2\pi i} &= - \Arg(1-e^{2\pi i z})
\end{align*}
By the Cauchy Riemann equations, we have
\begin{align*}
 \frac{\partial}{\partial y} \Re \frac{\Li (e^{2\pi i z})}{2\pi i} =  \Arg(1-e^{2\pi i z})
\end{align*}
Therefore, 
\begin{align*}
\Re \frac{\Li(e^{2\pi i z})}{2\pi i} 
&= \frac{\im \Li(e^{2\pi i z})}{2\pi} \\
&= \frac{D(e^{2\pi i z}) - \log |e^{2\pi i z}| \Arg(1-e^{2\pi i z})}{2\pi}\\
&= \frac{D(e^{2\pi i z})}{2\pi} + y\frac{\partial}{\partial y} \Re \frac{\Li(e^{2\pi i z})}{2\pi i} 
\end{align*}
Following the same arguments, it is easy to show that
\begin{align*}
\Re \frac{\Li(e^{-2\pi i z})}{2\pi i} &= \frac{D(e^{-2\pi i z})}{2\pi} + y\frac{\partial}{\partial y} \Re \frac{\Li(e^{-2\pi i z})}{2\pi i} 
\end{align*}
Thus,
\begin{align*}
\Re \frac{\Li(e^{2\pi i(s_2 - 1) - 2\pi i z_1 - 2\pi i z_2})}{2\pi i} &= \frac{D(e^{2\pi i(s_2 - 1) - 2\pi i z_1 - 2\pi i z_2})}{2\pi} + \sum_{k=1}^2 y_k\frac{\partial}{\partial y_k} \Re \frac{\Li(e^{2\pi i(s_2 - 1) - 2\pi i z_1 - 2\pi i z_2)})}{2\pi i} \\
\Re \frac{\Li(e^{2\pi i(s_2 - 1) - 2\pi i z_2})}{2\pi i} &= \frac{D(e^{2\pi i(s_2 - 1) - 2\pi i z_1 - 2\pi i z_2})}{2\pi} +  y_2\frac{\partial}{\partial y_2} \Re \frac{\Li(e^{2\pi i(s_2 - 1)  - 2\pi i z_2)})}{2\pi i} 
\end{align*}
Besides, for the linear terms in $z_1,z_2$, it is straight forward to verify that 
\begin{align*}
\Re\frac{1}{2\pi i} \lt[ \pm \lt(2\pi i \lt( s_1 - 1\rt)\rt) \lt(2\pi i \lt(z_1 -\frac{1}{2} \rt)\rt) \rt]&= y_1 \frac{\partial}{\partial y_1}\Re\frac{1}{2\pi i} \lt[ \pm \lt(2\pi i \lt( s_1 - 1\rt)\rt) \lt(2\pi i \lt(z_1 -\frac{1}{2} \rt)\rt) \rt]\\
\Re\frac{1}{2\pi i}  \lt[ - (2\pi i (s_2-1))(2\pi i z_1 + 2\pi i z_2) \rt]&= \sum_{k=1}^2 y_k \frac{\partial}{\partial y_k} \Re\frac{1}{2\pi i}  \lt[ - (2\pi i (s_2-1))(2\pi i z_1 + 2\pi i z_2) \rt]
\end{align*}
As a result, by direct computation, we get the differential formula.
\end{proof}

Together with the discussion in previous subsection, we have
\begin{align*}
\Re \Phi^{\pm(s_1,s_2)}(\mathbf{z}^{\pm(z_1,z_2)})
= \frac{1}{2\pi} V^{\pm(s_1,s_2)} (\mathbf{z}^{\pm(z_1,z_2)}) 
= \frac{1}{2\pi}\Vol\lt(\SS^3 \backslash WL, u_1 = \pm 2\pi i (1 - s_1), v_2 = 4\pi i(1-s_2)\rt)
\end{align*}
By the Theorem 3 in \cite{NZ85}, since $v_2$ is even in $u_1$ and the volume function is an even function in the variable $u_1$, we have 
$$\Vol\lt(\SS^3 \backslash WL, u_1 = 2\pi i (1 - s_1), v_2 = 4\pi i(1-s_2)\rt)=\Vol\lt(\SS^3 \backslash WL, u_1 = - 2\pi i (1 - s_1), v_2 = 4\pi i(1-s_2)\rt)$$
Thus we have
\begin{align}
\Re \Phi^{\pm(s_1,s_2)}(\mathbf{z}^{\pm(z_1,z_2)})
= \frac{1}{2\pi}\Vol\lt(\SS^3 \backslash WL, u_1 =  2\pi i (1 - s_1), v_2 = 4\pi i(1-s_2)\rt)
\end{align}

Next, from the definitions of the functions $\Phi^{\pm(s_1,s_2)}(\mathbf{z}^{\pm(z_1,z_2)})$, if we regard $s_1,s_2$ as complex variables, then we get two holomorphic functions in $s_1,s_2$. In particular, for each fixed $s_2\in \RR$ sufficiently close to $1$, we have two holomorphic functions for $s_1$ sufficiently close to $1$ given by
$$ N^\pm(s_1) = \Phi^{\pm(s_1,s_2)}(\mathbf{z}^{+(z_1,z_2)}) $$
with the same real part
$$ \Re N^\pm (s_1) = \frac{1}{2\pi}\Vol\lt(\SS^3 \backslash WL, u_1 =  2\pi i (1 - s_1), v_2 = 4\pi i(1-s_2)\rt) $$
Besides, when $s_1=1$, we have $N^+(1)=N^-(1)$. As a result, $N^+(s_1)=N^-(s_1)$ for all real $s_1 \sim 1$. Since this is true for all $s_2\sim 1$, we have
\begin{align}
 \Phi^{+(s_1,s_2)}(\mathbf{z}^{+(z_1,z_2)})
= \Phi^{-(s_1,s_2)}(\mathbf{z}^{-(z_1,z_2)})
\end{align}
for all $s_1,s_2\sim 1$. This completes the proof of Theorem~\ref{mainthmWL}.

Finally, to prove Corollary~\ref{WLVolsym}, note that by Theorem~\ref{mainthmWL}, we have
\begin{align}
\lim_{N\to\infty} \frac{1}{N+\frac{1}{2}}\log\lt|J_{M_1,M_2}(WL, e^{\frac{2\pi i}{N+\frac{1}{2}}})\rt| &= \Vol(\SS^3\backslash WL, u_1 = 2\pi i(1-s_1), v_2= 4\pi i(1-s_2)) \\
\lim_{N\to\infty} \frac{1}{N+\frac{1}{2}}\log\lt|J_{M_2,M_1}(WL, e^{\frac{2\pi i}{N+\frac{1}{2}}})\rt| &= \Vol(\SS^3\backslash WL, u_1 = 2\pi i(1-s_2), v_2= 4\pi i(1-s_1))
\end{align}
By exchanging the components of the Whitehead link, we have 
\begin{align}
\lim_{N\to\infty} \frac{1}{N+\frac{1}{2}}\log\lt|J_{M_1,M_2}(WL, e^{\frac{2\pi i}{N+\frac{1}{2}}})\rt| 
=
\lim_{N\to\infty} \frac{1}{N+\frac{1}{2}}\log\lt|J_{M_2,M_1}(WL, e^{\frac{2\pi i}{N+\frac{1}{2}}})\rt|
\end{align}
This proves (\ref{Volsym1}). Besides, recall that $u_2=0$ if and only if $v_2=0$. (\ref{Volsym2}) follows from (\ref{Volsym1}) by putting $s_2=1$.

\section{Generalization to $J_{M_1,M_2}(W_{a,1,c,d},e^{\frac{2\pi i}{N+\frac{1}{2}}})$}\label{gen}

\subsection{Potential functions for $J_{M_1,M_2}(W_{a,1,c,d},e^{\frac{2\pi i}{N+\frac{1}{2}}})$}
In this section, we compute the potential functions of the $\tilde{J}_{M_1,M_2}(W_{a,1,c,d}, e^{\frac{2\pi i}{N+\frac{1}{2}}})$ with the belt colored by $M_1$ and all the other components colored by $M_2$. First of all, recall from \cite{V08} that the unnormalized $J_{M_1,M_2}(W_{a,1,c,d})$ is given by \footnote{when $c+d=1$, there is an extra factor $t^{\frac{M_2^2-1}{2}}$ coming from the framing. Since it does not affect the exponential growth rate of the colored Jones polynomials, for simplicity we ignore this factor.}
\begin{align*}
J_{M_1,M_2} (W_{a,1,c,d}) = \frac{1}{t^{\frac{1}{2}}-t^{-\frac{1}{2}}} \sum_{n=0}^{M_2-1} (t^{\frac{M_1(2n+1)}{2}}-t^{-\frac{M_1(2n+1)}{2}}) \times t^{an(n+1)} \times \tilde{C}(n,t; M_2)^c \times \tilde{C}(n,t^{-1}; M_2)^d
\end{align*}
where 
$$\tilde{C}(n,t; M_2) =  t^{\frac{M_2(M_2-1)}{2}} \sum_{l = 0}^{M_2-1-n} t^{-M_2(l+n)} \prod_{j=1}^{n} \frac{(1-t^{M_2-l-j})(1-t^{l+j})}{1-t^j}.$$

Note that the contribution of the twists is given by
\begin{align}
t^{an(n+1)} 
&= e^{a \lt(\frac{N+\frac{1}{2}}{2\pi i}\rt) \lt[ 2\pi i \lt(\frac{n}{N+\frac{1}{2}} \rt) \rt] \lt[ 2\pi i \lt( \frac{n}{N+\frac{1}{2}} + \frac{1}{N+\frac{1}{2}} \rt) \rt] } \notag \\
&= e^{a \lt(\frac{N+\frac{1}{2}}{2\pi i}\rt) \lt[ 2\pi i \lt(\frac{n}{N+\frac{1}{2}} - \frac{1}{2} \rt) \rt] \lt[ 2\pi i \lt( \frac{n}{N+\frac{1}{2}} - \frac{1}{2} + \frac{1}{N+\frac{1}{2}} \rt) \rt] }
\times e^{a \pi i - \frac{a}{4}(2\pi i)(N+\frac{1}{2})} \notag \\
&= (-1)^a e^{ - \frac{a}{4}(2\pi i)(N+\frac{1}{2})}\times e^{a \lt(\frac{N+\frac{1}{2}}{2\pi i}\rt) \lt[ 2\pi i \lt(\frac{n}{N+\frac{1}{2}} - \frac{1}{2} \rt) \rt] \lt[ 2\pi i \lt( \frac{n}{N+\frac{1}{2}} - \frac{1}{2} + \frac{1}{N+\frac{1}{2}} \rt) \rt] }\label{t'3}
\end{align}

Besides, for the colored Jones polynomials of the mirror clasp, note that
\begin{align}
 \prod_{j=1}^{n} (1-t^{-(M_2-l-j)}) 
=&\prod_{j=1}^n (1-t^{- M_2 + (N+\frac{1}{2}) +l +j}) \notag\\
=&\prod_{j=1}^n \lt(1 - e^{2\pi i \lt(-\lt(\frac{M_2}{N+\frac{1}{2}} -1\rt) + \frac{l}{N+\frac{1}{2}} + \frac{j}{N+\frac{1}{2}}\rt)}\rt)\notag \\
=&\exp\lt(\frac{N+\frac{1}{2}}{2\pi i}\sum_{j=1}^n \lt( \varphi_r \lt(-\pi \lt(\frac{M_2}{N+\frac{1}{2}} -1\rt) + \frac{l\pi}{N+\frac{1}{2}} + \frac{(j-\frac{1}{2})\pi}{N+\frac{1}{2}}\rt) \rt. \rt.\notag\\
&\lt. \lt.\qquad - \varphi_r\lt(  -\pi\lt(\frac{M_2}{N+\frac{1}{2}} -1\rt) + \frac{l\pi}{N+\frac{1}{2}} + \frac{(j+\frac{1}{2})\pi}{N+\frac{1}{2}}\rt)  \rt) \rt) \notag\\
=& \exp \lt(\frac{N+\frac{1}{2}}{2\pi i} \lt( \varphi_r \lt( -\pi  \lt(\frac{M_2}{N+\frac{1}{2}} -1\rt) + \frac{l\pi}{N+\frac{1}{2}} + \frac{\pi}{2N+1}\rt) \rt.\rt.\notag\\
&\lt. \lt.\qquad - \varphi_r\lt( -\pi  \lt(\frac{M_2}{N+\frac{1}{2}} -1\rt) + \frac{l\pi}{N+\frac{1}{2}} + \frac{n\pi}{N+\frac{1}{2}} + \frac{\pi}{2N+1}\rt)  \rt)\rt)\label{t9}
\end{align}

Besides,
\begin{align}
 \prod_{j=1}^{n} (1-t^{-(l+j)})
=&\prod_{j=1}^n \lt(1 - e^{2\pi i \lt(1 - \frac{l}{N+\frac{1}{2}} - \frac{j}{N+\frac{1}{2}}\rt)}\rt)\notag \\
=&\exp\lt(\frac{N+\frac{1}{2}}{2\pi i}\sum_{j=1}^n \lt( \varphi_r \lt(\pi -\frac{l\pi}{N+\frac{1}{2}} - \frac{(j+\frac{1}{2})\pi}{N+\frac{1}{2}}\rt)  - \varphi_r\lt( \pi -\frac{l\pi}{N+\frac{1}{2}} - \frac{(j-\frac{1}{2})\pi}{N+\frac{1}{2}}\rt)  \rt) \rt)\notag \\
=&\exp\lt(\frac{N+\frac{1}{2}}{2\pi i}\lt( \varphi_r \lt(\pi-\frac{l\pi}{N+\frac{1}{2}} - \frac{n\pi}{N+\frac{1}{2}} - \frac{\pi}{2N+1}\rt)  - \varphi_r\lt( \pi -\frac{l\pi}{N+\frac{1}{2}}-\frac{\pi}{2N+1}\rt)   \rt)\rt)\label{t10}
\\ \notag\\
 \prod_{j=1}^{n} (1-t^{-j})
=&\prod_{j=1}^n \lt(1 - e^{2\pi i \lt(1 - \frac{j}{N+\frac{1}{2}}\rt)}\rt) \notag\\
=&\exp\lt(\frac{N+\frac{1}{2}}{2\pi i} \sum_{j=1}^n \lt( \varphi_r \lt(\pi -\frac{(j+\frac{1}{2})\pi}{N+\frac{1}{2}}\rt)  - \varphi_r\lt(\pi -\frac{(j-\frac{1}{2})\pi}{N+\frac{1}{2}}\rt)  \rt) \rt)\notag \\
=&\exp\lt(\frac{N+\frac{1}{2}}{2\pi i} \lt( \varphi_r\lt( \pi -\frac{n\pi}{N+\frac{1}{2}} - \frac{\pi}{2N+1}\rt) - \varphi_r \lt( \pi - \frac{\pi}{2N+1}\rt) \rt) \rt) \label{t11}
 \end{align}

For each $i = 1,2,\dots,c$, define the function $\psi_{i,M_1,M_2}^{(s_1,s_2)}(z_1,z_{i+1})$ by
\begin{align}
\psi_{i,M_1,M_2}^{(s_1,s_2)} (z_1,z_{i+1})
&= \frac{1}{2\pi i}\lt\{ - \lt(2\pi i \lt(\frac{M_2}{N+\frac{1}{2}} - 1\rt)\rt)(2\pi i z_1 + 2\pi i z_{i+1}) \rt. \notag \\
&\qquad  + \lt[\varphi_r \lt(\frac{M_2\pi}{N+\frac{1}{2}} - \pi z_1 - \pi z_{i+1} -\frac{\pi}{2N+1}\rt) \rt.  \notag\\
&\qquad  - \varphi_r\lt( \frac{M_2\pi}{N+\frac{1}{2}} - \pi z_{i+1}- \frac{\pi}{2N+1}\rt) \notag \\
&\qquad  + \varphi_r \lt(\pi z_{i+1} + \frac{\pi}{2N+1}\rt)  - \varphi_r\lt( \pi z_1 + \pi z_{i+1} + \frac{\pi}{2N+1}\rt) \notag\\
&\lt.\lt.\qquad  + \varphi_r\lt( \pi z_1 + \frac{\pi}{2N+1}\rt) \rt] \rt\}
\end{align}

For each $i = 1,2, \dots, d$, define the function $\kappa_{i,M_1,M_2}^{(s_1,s_2)} (z_1, z_{c+i+1})$ by
\begin{align}
\kappa_{i,M_1,M_2}^{(s_1,s_2)} (z_1,z_{c+i+1})
&= \frac{1}{2\pi i}\lt\{  \lt(2\pi i \lt(\frac{M_2}{N+\frac{1}{2}} - 1\rt)\rt)(2\pi i z_1 + 2\pi i z_{c+i+1}) \rt. \notag \\
&\qquad  -  \lt[\varphi_r \lt( -\pi  \lt(\frac{M_2}{N+\frac{1}{2}} -1\rt) + \pi z_1 + \pi z_{c+i+1} + \frac{\pi}{2N+1}\rt) \rt.  \notag\\
&\qquad  - \varphi_r\lt( -\pi  \lt(\frac{M_2}{N+\frac{1}{2}} -1\rt) + \pi z_{c+i+1} + \frac{\pi}{2N+1}\rt) \notag \\
&\qquad  + \varphi_r \lt( \pi -\pi z_{c+i+1} - \frac{\pi}{2N+1}\rt)  - \varphi_r\lt( \pi - \pi z_1 - \pi z_{c+i+1} - \frac{\pi}{2N+1}\rt) \notag\\
&\lt.\lt.\qquad  + \varphi_r\lt( \pi - \pi z_1 - \frac{\pi}{2N+1}\rt) \rt] \rt\}
\end{align}

Altogether, by (\ref{t1}) - (\ref{t8}) and (\ref{t'3}) - (\ref{t11}), we have
\begin{align}
 J_{M_1, M_2} (W_{a,1,c,d}, e^{\frac{2\pi i}{N+\frac{1}{2}}})
&=   \frac{(-1)^{(M_1 - N+a) } e^{\frac{2\pi i}{N+\frac{1}{2}}(M_2^2 - \frac{M_2}{2} - \frac{1}{2})}  e^{- \frac{a}{4}(2\pi i)(N+\frac{1}{2})} e^{\frac{2\pi i}{N+\frac{1}{2}}\frac{(c-d)M_2(M_2-1)}{2}}}{2\sin(\frac{\pi}{N+\frac{1}{2}})} \notag\\
&\qquad \times 
\exp\lt(  -\varphi_r \lt(  \frac{\pi}{2N+1}\rt)\rt)^{c}
\exp\lt(  \varphi_r \lt(\pi  - \frac{\pi}{2N+1}\rt)\rt)^{d}
\times (I_+ + I_-) 
\end{align}
where
\begin{align}
I_\pm =
&  
\sum_{n=0}^{M_2-1} \sum_{l_1=0}^{M_2-1-n}\dots \sum_{l_c=0}^{M_2-1-n} \sum_{l_1'=0}^{M_2-1-n}\dots \sum_{l_d'=0}^{M_2-1-n} \notag\\
&\qquad \exp\lt( \lt(N+\frac{1}{2}\rt)\Phi^{\pm(s_1, s_2);a,c,d}_{M_1, M_2} 
\lt(\frac{n}{N+\frac{1}{2}},\frac{l_1}{N+\frac{1}{2}},\dots, \frac{l_c}{N+\frac{1}{2}}, \frac{l_1'}{N+\frac{1}{2}},\dots, \frac{l_d'}{N+\frac{1}{2}} \rt) \rt)
\end{align}
with 
\begin{align*}
&\Phi^{\pm (s_1,s_2);a,c,d}_{M_1,M_2} (z_1, z_2, \dots, z_{c+1}, z_{c+2},\dots, z_{c+d+1}) \\
&=  \frac{1}{2\pi i} \lt\{  \pm2\pi i  \lt( \frac{M_1}{N+\frac{1}{2}} - 1\rt) \lt(2\pi i \lt(z_1 -\frac{1}{2} \rt)\rt) 
+ a \lt[2\pi i \lt(z_1-\frac{1}{2} \rt) \rt] \lt[ 2\pi i \lt(z_1-\frac{1}{2}+ \frac{1}{N+\frac{1}{2}}  \rt) \rt]\rt\}  \\
&\qquad  + \sum_{i=1}^c \psi_{M_1,M_2,i}^{(s_1,s_2)} (z_1,z_{i+1}) + \sum_{i=1}^d \kappa_{M_1,M_2,i}^{(s_1,s_2)}(z_1,z_{c+i+1})
\end{align*}

Take $N \to \infty$, we have
\begin{align*}
\psi_{i}^{(s_1,s_2)} (z_1, z_{i+1})
&= \frac{1}{2\pi i} \lt[ - 2\pi i(s_2 -1)(2\pi i z_1 + 2\pi i z_{i+1} )  \rt.\\
&\qquad + \Li\lt(e^{2\pi i (s_2-1) - 2\pi i z_1 - 2\pi i z_{i+1}}\rt) - \Li\lt( e^{2\pi i (s_2-1) - 2\pi i z_{i+1} }\rt) \\
&\qquad \lt. + \Li \lt(e^{2\pi i z_{i+1}}\rt) - \Li\lt(e^{2\pi iz_1+ 2\pi i z_{i+1}}\rt) + \Li\lt(e^{2\pi i z_1}\rt) \rt] ,\\
\kappa_i^{(s_1,s_2)}(z_1,z_{c+i+1}) 
&= \frac{1}{2\pi i} [ 2\pi i(s_2 -1)(2\pi i z_1 + 2\pi i z_{c+i+1} ) \\
&\qquad - \Li\lt(e^{-2\pi i (s_2-1) + 2\pi i z_1 + 2\pi i z_{c+i+1}}\rt) + \Li\lt( e^{-2\pi i (s_2-1) + 2\pi i z_{c+i+1} }\rt) \\
&\qquad - \Li \lt(e^{-2\pi i z_{c+i+1}}\rt) + \Li\lt(e^{-2\pi iz_1- 2\pi i z_{c+i+1}}\rt) - \Li\lt(e^{-2\pi i z_1}\rt) ]
\end{align*}
and
\begin{align*}
&\Phi^{\pm (s_1,s_2);a,c,d}  (z_1, z_2,z_3, \dots, z_{c+1}, z_{c+2},\dots, z_{c+d+1})  \\
&=  \frac{1}{2\pi i} \lt\{  \pm2\pi i  \lt( s_2 - 1\rt) \lt(2\pi i \lt(z_1 -\frac{1}{2} \rt)\rt)
 + a \lt[2\pi i \lt(z_1-\frac{1}{2} \rt) \rt]^2  \rt\} \\
&\qquad + \sum_{i=1}^c \psi^{(s_1,s_2)}_i (z,z_{i+1}) + \sum_{i=1}^d \kappa_i^{(s_1,s_2)}(z_1,z_{c+i+1})
\end{align*}

As an analogue of Lemma~\ref{EM_2}, we let
\begin{align}
R_{M_2}(z_1,z_2) 
&= \frac{1}{2}\lt[ \log\lt(1-e^{-2\pi i \lt(\frac{M_2}{N+\frac{1}{2}}-1\rt) + 2\pi i z_1 + 2\pi i z_2 }\rt)
- \log\lt(1-e^{-2\pi i \lt(\frac{M_2}{N+\frac{1}{2}}-1\rt) + 2\pi i z_2 } \rt) \rt.\notag\\
&\lt.\qquad - \log\lt(1- e^{-2\pi i z_2 } \rt)
+ \log\lt(1- e^{-2\pi iz_1- 2\pi i z_2} \rt) 
- \log\lt(1-  e^{-2\pi i z_1 } \rt) \rt]
\end{align}

Following the proof of Lemma~\ref{EM_2}, we have

\begin{lemma}\label{EM_2'}
On any compact subset of 
$$\lt\{ (z_1,z_2,\dots,z_{c+d+1}) \in \CC \mid \Re z_1 , \dots, \Re z_{c+d+1} >0, \Re z_1 + \Re z_k < s_2 \text{ for }k=2,\dots, c+d+1\rt\},$$
we have
\begin{align}
&\Phi^{\pm(s_1,s_2);a,c,d}_{M_1,M_2} (z_1, z_2,z_3, \dots, z_{c+1}, z_{c+2},\dots, z_{c+d+1}) \notag\\
&= \Phi^{\pm (s_1,s_2);a,c,d}  (z_1, z_2,z_3, \dots, z_{c+1}, z_{c+2},\dots, z_{c+d+1}) \notag\\
&\qquad+\frac{1}{N+\frac{1}{2}}E_{M_2}^{a,c,d} (z_1, z_2,z_3, \dots, z_{c+1}, z_{c+2},\dots, z_{c+d+1})+ O\lt(\frac{1}{\lt(N+\frac{1}{2}\rt)^2}\rt),
\end{align}
where
\begin{align}
&\quad E_{M_2}^{a,c,d} (z_1, z_2,z_3, \dots, z_{c+1}, z_{c+2},\dots, z_{c+d+1})\notag\\
&= -4\pi^2 a \lt(z_1-\frac{1}{2}\rt) + \sum_{i=1}^c E_{M_2}(z_1,z_i+1) + \sum_{i=1}^d R_{M_2}(z_1,z_{c+i+1})
\end{align}
\end{lemma}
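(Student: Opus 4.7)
The plan is to follow the strategy of Lemma~\ref{EM_2}: apply the asymptotic expansion (\ref{QtoC}) to each quantum dilogarithm, Taylor-expand the resulting $\Li$ around its limiting argument, and collect the $1/(N+\frac{1}{2})$ coefficient. Since the potential decomposes as
\begin{align*}
\Phi^{\pm(s_1,s_2);a,c,d}_{M_1,M_2} = (\text{linear in } z_1) + (\text{twist}) + \sum_{i=1}^c \psi_{i,M_1,M_2}^{(s_1,s_2)} + \sum_{i=1}^d \kappa_{i,M_1,M_2}^{(s_1,s_2)},
\end{align*}
I would handle these four types of contribution independently and sum at the end.

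First, the linear term $\pm 2\pi i(\frac{M_1}{N+1/2} - 1) \cdot 2\pi i(z_1-\frac{1}{2})/(2\pi i)$ agrees exactly with its counterpart in $\Phi^{\pm(s_1,s_2);a,c,d}$, so it contributes nothing to $E_{M_2}^{a,c,d}$. Second, the twist term $\frac{a}{2\pi i}[2\pi i(z_1-\frac{1}{2})][2\pi i(z_1-\frac{1}{2} + \frac{1}{N+1/2})]$ differs from $\frac{a}{2\pi i}[2\pi i(z_1-\frac{1}{2})]^2$ by an elementary algebraic expansion whose leading $1/(N+\frac{1}{2})$ coefficient is $-4\pi^2 a(z_1-\frac{1}{2})$, producing the first summand of $E_{M_2}^{a,c,d}$.

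Third, each $\psi_{i,M_1,M_2}^{(s_1,s_2)}(z_1,z_{i+1})$ is structurally identical to the $\varphi_r$-part of the single-clasp Whitehead link potential already treated, so the computation in the proof of Lemma~\ref{EM_2} applies verbatim: replacing each $\varphi_r(\,\cdot\, \pm \frac{\pi}{2N+1})$ with $\Li(e^{2i(\cdot)\pm \pi i/(N+1/2)})$ via (\ref{QtoC}) and Taylor-expanding using $\frac{d}{dv}\Li(e^v) = -\log(1-e^v)$ extracts $E_{M_2}(z_1,z_{i+1})/(N+\frac{1}{2})$ once the outside factor $1/(2\pi i)$ is collected. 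Fourth, each mirror clasp factor $\kappa_{i,M_1,M_2}^{(s_1,s_2)}(z_1,z_{c+i+1})$ is handled by the same substitution-and-expansion method, and produces $R_{M_2}(z_1,z_{c+i+1})/(N+\frac{1}{2})$; the only new input is that the $\varphi_r$-arguments now take the form $-\pi(\frac{M_2}{N+1/2}-1) + \pi z + \frac{\pi}{2N+1}$ and $\pi - \pi z - \frac{\pi}{2N+1}$, so each $\pm \pi i/(N+\frac{1}{2})$ shift carries the opposite sign to the clasp case, which accounts both for the sign flips and for the conjugate arguments $e^{-2\pi i \,\cdot\,}$ appearing in the five logarithms that define $R_{M_2}$.

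Summing these four contributions and absorbing the uniform $O(1/(N+\frac{1}{2})^2)$ Taylor remainders on any compact subset of $\{\Re z_1, \dots, \Re z_{c+d+1} > 0, \; \Re z_1 + \Re z_k < s_2\}$ completes the proof. The only real obstacle is bookkeeping, namely tracking the signs of the $\pm \frac{\pi}{2N+1}$ shifts inside each $\varphi_r$ occurring in $\kappa_i$, which simultaneously dictate the signs and the $e^{\pm 2\pi i \,\cdot\,}$ arguments of the five logarithms in $R_{M_2}$; no new analytic ingredient beyond that of Lemma~\ref{EM_2} is needed.
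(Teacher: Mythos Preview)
Your proposal is correct and follows exactly the route the paper intends: the paper's own proof consists of the single line ``Following the proof of Lemma~\ref{EM_2}, we have'', and your decomposition into the linear, twist, clasp, and mirror-clasp contributions, each handled by the $\varphi_r\to\Li$ substitution (\ref{QtoC}) followed by first-order Taylor expansion, is precisely what that line means. The only addition you make beyond Lemma~\ref{EM_2} is the elementary algebraic expansion of the twist factor, which is the sole new ingredient here.
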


Note that 
\begin{align}
\frac{\partial}{\partial z_1}  \psi_i (z_1,z_{i+1})
&=  \log(1-e^{2\pi i (s_2-1)}e^{-2\pi i z_1 - 2\pi i z_{i+1}}) + \log(1 - e^{2\pi i z_1 + 2\pi i z_{i+1} }) - \log(1 - e^{2\pi i z_1}) \notag \\
& \qquad  -2\pi i(s_2 - 1)\\
\frac{\partial}{\partial z_{i+1}}  \psi_i (z_1,z_{i+1})
&= \log(1-e^{2\pi i (s_2 -1)}e^{-2\pi i z_1 - 2\pi i z_{i+1}}) + \log(1 - e^{2\pi i z_1 + 2\pi i z_{i+1}}) \notag\\
&\quad   - \log(1 - e^{2\pi i (s_2 - 1) - 2\pi i z_{i+1}}) - \log(1 - e^{2\pi i z_{i+1}})  -2\pi i(s_2 - 1)  \\
\frac{\partial}{\partial z_1}  \kappa_i (z_1,z_{c+i+1})
&=  \log(1-e^{-2\pi i (s_2-1)}e^{2\pi i z_1 + 2\pi i z_{c+i+1}}) + \log(1 - e^{-2\pi i z_1 - 2\pi i z_{c+i+1} }) \notag\\
& \qquad - \log(1 - e^{-2\pi i z_1})  +2\pi i(s_2 - 1)\\
\frac{\partial}{\partial z_{c+i+1}}  \kappa_i (z_1,z_{c+i+1})
&= \log(1-e^{-2\pi i (s_2 -1)}e^{2\pi i z_1 + 2\pi i z_{c+i+1}}) + \log(1 - e^{-2\pi i z_1 - 2\pi i z_{c+i+1}}) \notag\\
&\quad   - \log(1 - e^{-2\pi i (s_2 - 1) + 2\pi i z_{c+i+1}}) - \log(1 - e^{-2\pi i z_{c+i+1}})  + 2\pi i(s_2 - 1)  
\end{align}

As a result, the critical point equations for the potential function $\Phi^{\pm (s_1,s_2);a,c,d}(z_1, z_2,z_3, \dots, z_{c+d+1})$ are given by

\begin{empheq}[left = \empheqlbrace]{align}
&\quad \sum_{i=1}^c \lt[\log(1-e^{2\pi i (s_2 -1)}e^{-2\pi i z_1 - 2\pi i z_{i+1}}) + \log(1 - e^{2\pi i z_1 + 2\pi i z_{i+1}}) - \log(1 - e^{2\pi i z_1}) \rt] \notag \\
&\quad + \sum_{i=1}^d \lt[\log(1-e^{-2\pi i (s_2 -1)}e^{2\pi i z_1 + 2\pi i z_{c+i+1}}) + \log(1 - e^{-2\pi i z_1 - 2\pi i z_{c+i+1}}) - \log(1 - e^{-2\pi i z_1}) \rt] \notag \\
&=  2\pi i\lt[ \mp\lt( s_1 - 1\rt) + (c-d)(s_2 - 1) - 2a \lt(z_1-\frac{1}{2}\rt)\rt] \label{sbm1}\\ \notag \\
&\quad \log(1-e^{2\pi i (s_2 -1)}e^{-2\pi i z_1 - 2\pi i z_{i+1}}) + \log(1 - e^{2\pi i z_1 + 2\pi i z_{i+1}}) \notag\\
&\quad  - \log(1 - e^{2\pi i (s_2 - 1) - 2\pi i z_{i+1}}) - \log(1 - e^{2\pi i z_{i+1}})   \notag\\
&=  2\pi i  (s_2 - 1) \quad\text{for $i=1,2,\dots, c$} \\ \notag \\
&\quad \log(1-e^{-2\pi i (s_2 -1)}e^{2\pi i z_1 + 2\pi i z_{c+i+1}}) + \log(1 - e^{-2\pi i z_1 - 2\pi i z_{c+i+1}}) \notag\\
&\quad  - \log(1 - e^{-2\pi i (s_2 - 1) + 2\pi i z_{c+i+1}})- \log(1 - e^{-2\pi i z_{c+i+1}})  \notag \\
&=  -2\pi i  (s_2 - 1) \quad\text{for $i=1,2,\dots, d$} 
\end{empheq}

Put $Z_{i}=e^{2\pi i z_{i}}$, $U_i = \frac{1}{Z_1 Z_{i+1}}$ and $B_l = e^{2\pi i s_l}$ for $i=1,2,\dots, c+d$ and $l = 1,2$. After taking exponential, we have

\begin{empheq}[left = \empheqlbrace]{align}\hspace{-2pt}
\lt[\prod_{i=1}^c  \frac{\lt(1 - B_2  U_i \rt)\lt( 1 - \dfrac{1}{U_i}\rt)}{B_2\lt( 1- Z_1 \rt)} \rt]
\lt[\prod_{i=1}^d  \frac{\lt(1 - B_2^{-1}  U_{i+c}^{-1} \rt)\lt( 1 - \dfrac{1}{U_{i+c}^{-1}}\rt)}{B_2^{-1}\lt( 1- Z_1^{-1} \rt)} \rt]
 (Z_1^2)^a&= B_1^{\mp} \\
 \frac{(1- B_2 U_i)\lt(1-\dfrac{1}{U_i}\rt)}{(1-\dfrac{B_2 }{Z_{i+1}})(1- Z_{i+1})} &= B_2  \quad\text{for $i=1,2,\dots,c$}\\
 \frac{(1- B_2^{-1} U_{c+i}^{-1})\lt(1-\dfrac{1}{U_{c+i}^{-1}}\rt)}{(1-\dfrac{B_2^{-1} }{Z_{c+i}^{-1}})(1- Z_{c+i}^{-1})} &= B_2^{-1}  \quad\text{for $i=1,2,\dots,d$}
\end{empheq}

\subsection{Asymptotics of $J_{N, N}(W_{a,1,c,d},e^{\frac{2\pi i}{N+\frac{1}{2}}})$}
When $M_1=M_2=N$, the potential function becomes
\begin{align*}
&\Phi^{(1,1);a,c,d}  (z_1, z_2,z_3, \dots, z_{c+d+1})\\ 
&=  \frac{1}{2\pi i} \lt(  a \lt(2\pi i \lt(z_1-\frac{1}{2} \rt) \rt)^2  \rt)
 + \sum_{i=1}^c \psi^{(1,1)}_i (z_1,z_{i+1}) + \sum_{i=1}^d \kappa_i^{(1,1)}(z_1,z_{c+i+1})
\end{align*}
Besides, the critical point equations of the potential function are given by
\begin{empheq}[left = \empheqlbrace]{align}
& \sum_{i=1}^c \lt[\log(1-e^{-2\pi i z_1 - 2\pi i z_{i+1}}) + \log(1 - e^{2\pi i z_1 + 2\pi i z_{i+1}}) - \log(1 - e^{2\pi i z_1}) \rt] \notag\\
& - \sum_{i=1}^{d} \lt[\log(1-e^{2\pi i z_1 + 2\pi i z_{c+i+1}}) + \log(1 - e^{-2\pi i z_1 - 2\pi i z_{c+i+1}}) - \log(1 - e^{-2\pi i z_1}) \rt] \notag\\
&=   -4\pi i a(z_1 - \frac{1}{2} )  \label{a1cd1} \\\notag\\
& \log(1-e^{-2\pi i z_1 - 2\pi i z_{i+1}}) + \log(1 - e^{2\pi i z_1 + 2\pi i z_{i+1}}) - \log(1 - e^{ - 2\pi i z_{i+1}})
 - \log(1 - e^{2\pi i z_{i+1}}) \notag\\
& = 0 \quad\text{for $i=1,2,\dots, c$} \label{a1cd2} \\\notag\\
& \log(1-e^{2\pi i z_1 + 2\pi i z_{c+i+1}}) + \log(1 - e^{-2\pi i z_1 - 2\pi i z_{c+i+1}}) - \log(1 - e^{  2\pi i z_{i+1}})
 - \log(1 - e^{-2\pi i z_{c+i+1}}) \notag\\
& = 0 \quad\text{for $i=1,2,\dots, d$} \label{a1cd3} 
\end{empheq}
It is straightforward to verify that $(z_1,z_2,\dots,z_c, z_{c+1},\dots, z_{c+d+1}) = \lt(\frac{1}{2},\frac{1}{4},\dots, \frac{1}{4}\rt)$ is a solution of equations (\ref{a1cd1}) - (\ref{a1cd3}). Besides, it is the unique maximum point on the region
$$ D = \{ (z_1,\dots, z_{c+d+1})\in [0,1]^{c+d+1} \mid z_1 + z_i \leq 1 \text{ for $i=2,\dots, c+d+1$} \} $$
Moreover, since
\begin{align}
\psi^{(1,1)}_i \lt( \frac{1}{2}, \frac{1}{4} \rt) &= \frac{1}{2\pi i} [2\Li (i) - 2\Li(-i) + \Li(-1)] = \frac{1}{2\pi} \lt[ 8L \lt(\frac{\pi}{4}\rt) + \frac{\pi^2 i}{12} \rt] \\
\kappa^{(1,1)}_i \lt( \frac{1}{2}, \frac{1}{4} \rt) &= \frac{1}{2\pi i} [2\Li (i) - 2\Li(-i) - \Li(-1)] = \frac{1}{2\pi} \lt[ 8L \lt(\frac{\pi}{4}\rt) - \frac{\pi^2 i}{12} \rt] 
\end{align}
the critical value is given by
\begin{align*}
\Phi^{(1,1);a,c,d} \lt(\frac{1}{2},\frac{1}{4},\dots, \frac{1}{4}\rt)
= \frac{1}{2\pi i} \sum_{i=1}^c \psi_i \lt(\frac{1}{2},\frac{1}{4}\rt) + \frac{1}{2\pi i} \sum_{i=1}^d \kappa_i \lt(\frac{1}{2},\frac{1}{4} \rt)
= \frac{1}{2\pi} \lt[8(c+d)L\lt(\frac{\pi}{4}\rt) + (c-d)\frac{\pi^2 i}{12} \rt]
\end{align*}
Besides, by Lemma A.3 of \cite{O52}, we have
\begin{align}\label{QDFcoe}
\exp\lt(-\phi^h \lt( \frac{\pi}{2N+1} \rt) \rt) 
&\stackrel[N \to \infty]{\sim}{ } e^{-\frac{\pi i}{4}}(N+\frac{1}{2})^{-\frac{1}{2}}\exp\lt( \frac{N+\frac{1}{2}}{2\pi} \frac{\pi^2 i}{6} \rt) \\
\exp\lt(\phi^h \lt(\pi - \frac{\pi}{2N+1} \rt) \rt) 
&\stackrel[N \to \infty]{\sim}{ } e^{\frac{\pi i}{4}}(N+\frac{1}{2})^{-\frac{1}{2}}\exp\lt( \frac{N+\frac{1}{2}}{2\pi} \lt(\frac{-\pi^2 i}{6}\rt) \rt)
\end{align}

Nevertheless, the Hessian of $\Phi^{(1,1);a,c,d}(\frac{1}{2},\frac{1}{4},\dots, \frac{1}{4})$ is given by
$$   
\Hess\lt(\Phi^{(1,1);a,c,d}\lt(\frac{1}{2},\frac{1}{4},\dots, \frac{1}{4}\rt)\rt) = 2\pi i
\begin{pmatrix}
    (c+d)i - \frac{c-d}{2} +2a & i  & i & i &\dots  & i\\
    i & 2i  & 0 & 0 &\dots  &0\\
    i & 0& 2i &0 &\dots  &0\\
    &&&\vdots\\
    i & 0 & 0 &\dots &0 & 2i\\
\end{pmatrix}
$$
For any $a_1,a_2,a_3 \in \CC$, one can verify that the determinant of the $(c+d+1)\times (c+d+1)$ matrix
$$
\begin{pmatrix}
a_1&a_2&a_2&a_2&\dots&a_2 \\
a_2&a_3&0&0&\dots&0 \\
a_2&0&a_3&0&\dots&0 \\
\vdots&\vdots&\vdots&\vdots&\vdots\\
a_2&0&0&0&\dots&0 \\
a_2&0&0&0&\dots&a_3
\end{pmatrix}
$$
is given by
$$
\begin{vmatrix}
a_1&a_2&a_2&a_2&\dots&a_2 \\
a_2&a_3&0&0&\dots&0 \\
a_2&0&a_3&0&\dots&0 \\
\vdots&\vdots&\vdots&\vdots&\vdots\\
a_2&0&0&0&\dots&0 \\
a_2&0&0&0&\dots&a_3
\end{vmatrix}
=
a_3^{c+d-1}[a_1a_3-(c+d)a_2^2]
$$
In particular, if we put $a_1=(c+d)i - \frac{c-d}{2}+2a, a_2=i$ and $a_3=2i$, one can show that
\begin{align*}
\det\lt(-\Hess\lt(\Phi^{(1,1);a,c,d}\lt(\frac{1}{2},\frac{1}{4},\dots, \frac{1}{4}\rt)\rt) \rt)
\neq 0
\end{align*}
for any $a\in \ZZ, c,d\in \NN\cup\{0\}$ with $c+d\geq 1$. Theorem~\ref{CJWNN} then follows by similar argument as in Section~\ref{WLNN}.

\subsection{Triangulation of the $\SS^3\backslash W_{0,1,c,0}$}
The triangulation of $\SS^3 \backslash WL_{0,1,c,0}$ is similar to that for the Whitehead link complement. First of all, we prepare $c$ ideal octahedra. Next, we glue the top of the cylinder to the bottom of the next cylinder by the rule $C_2 \to C_1$ and $A_2 \to A_1$ (Figure \ref{f14}), then we obtain a decomposition of the boundary torus into paralleograms. Note that this torus is exactly the boundary torus of a tubular neighborhood of the belt component of $WL_{0,1,c,0}$.\\ 

          \begin{figure}[H]
          \centering
              \includegraphics[width=0.8\linewidth]{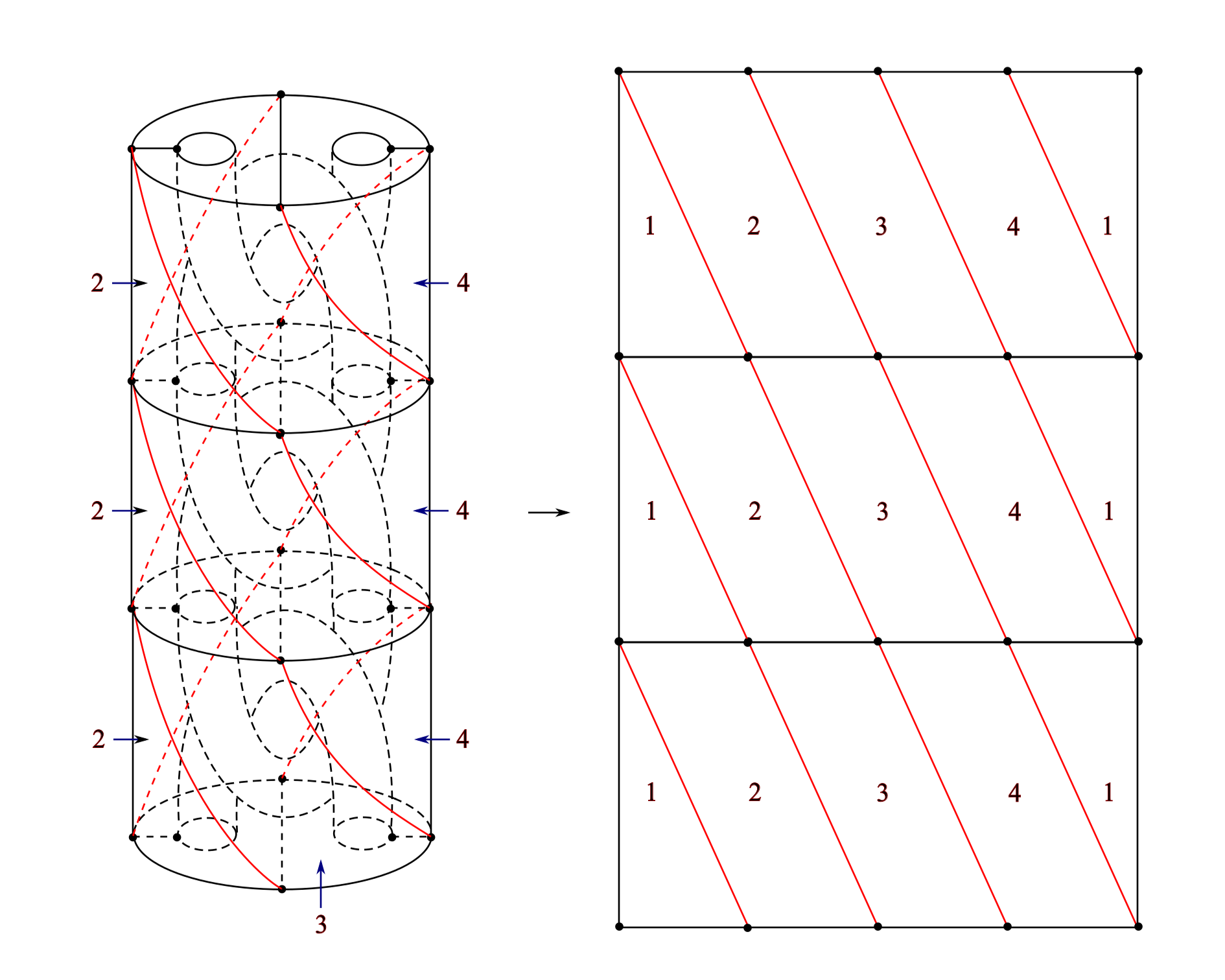}
              \caption{decomposition of the boundary torus into parallelograms}\label{f14}
          \end{figure}

Originally, each tetrahedron has its own assignment of shape parameters $Z_1=e^{2\pi i z_1}, W_i=e^{2\pi i z_{i+1}}$ and $U_i=\frac{1}{Z_1W_i}$. If we put $z_1=z_2=\dots=z_{c+1}=z$, $w_1=w_1=\dots=w_{c+1}=w$ and $u_1=u_2=\dots=u_{c+1}=u$, by similar calculation in Section~\ref{geoWL}, one can verify that the edges equations can be reduced to a single equation
\begin{align}
\lt(\frac{U'}{W'} \rt) \lt( \frac{(B_2 U)''}{(B_2^{-1}W)''}\rt) = 1
\end{align}

On the other hand, the critical point equations for the potential function $\Phi^{(1,1);0,c,0}  (z_1, z_2,z_3, \dots, z_{c+1})$ are given by

\begin{empheq}[left = \empheqlbrace]{align}
&\quad \sum_{i=1}^c \lt[\log(1-e^{2\pi i (s_2 -1)}e^{-2\pi i z_1 - 2\pi i z_{i+1}}) + \log(1 - e^{2\pi i z_1 + 2\pi i z_{i+1}}) - \log(1 - e^{2\pi i z_1}) \rt] \notag \\
&=  2\pi i\lt[ -\lt( s_1 - 1\rt) + c(s_2 - 1) \rt] \label{W01c01}\\ \notag \\
&\quad \log(1-e^{2\pi i (s_2 -1)}e^{-2\pi i z_1 - 2\pi i z_{i+1}}) + \log(1 - e^{2\pi i z_1 + 2\pi i z_{i+1}}) \notag\\
&\quad  - \log(1 - e^{2\pi i (s_2 - 1) - 2\pi i z_{i+1}}) - \log(1 - e^{2\pi i z_{i+1}})   \notag\\
&=  2\pi i  (s_2 - 1) \label{W01c02}\quad\text{for $i=1,2,\dots, c$} 
\end{empheq}
By similar argument in Section~\ref{geoWL}, one can show that the holonomy of the meridian of the belt component and the holonomy of the longitude of every clasps components are exactly $B_1$ and $B_2$ respectively. Theorem~\ref{mainthmWa1c0} then follows from similar arguments in Section~\ref{WLs}-\ref{geoWL}.

\section{Volume conjecture for $TV_r(W_{a,b,c,d}, e^{\frac{2\pi i}{r}})$ with $b\geq 1$}\label{TV}
Let $r=2N+1$. Recall from Theorem~\ref{relationship} that the $r$-th Turaev-Viro invariants for the link complement $\SS^3 \backslash L$ is related to the colored Jones polynomials of the link as follows:
\begin{align*}
TV_{r}\left(\SS^3 \backslash L, e^{\frac{2\pi i}{r}}\right) = 2^{n-1}\lt( \frac{2\sin(\frac{2\pi}{r})}{\sqrt{r}}\rt)^{2} \sum_{1\leq \vec{M} \leq \frac{r-1}{2}}\left|{J}_{\vec{M}}\left(L,e^{\frac{2\pi i }{N+\frac{1}{2}}}\right)\right|^2 ,
\end{align*}
From the formula of $J_{\vec{M}}(W_{a,b,c,d},e^{\frac{2\pi i}{N+\frac{1}{2}}})$, it is easy to see that the colored Jones polynomials of the belt tangle and twist tangle grow at most polynomially. Thus, in order to find an upper bound for the exponential growth rate of $J_{\vec{M}}(W_{a,b,c,d},e^{\frac{2\pi i}{N+\frac{1}{2}}})$, we only need to find that for the clasp tangle $\tilde{C}(n,e^{\frac{2\pi i}{N+\frac{1}{2}}}; M_2)$. 
 
\begin{lemma}\label{uppbdW}
For any $n \in \{1,2,\dots,M-1\}$, $l \in \{1,2,\dots, M-1-n\}$, let 
$$ c_{M}(n,l;t) = \prod_{j=1}^{n} \lt|\frac{(1-t^{M-l-j})(1-t^{l+j})}{1-t^j}\rt| $$
For each $M$, let $n_M\in \{1,2,\dots,M-1\}$ and $l_M \in \{1,2,\dots, M-1-n\}$ such that $c_M(n_M,l_M)$ achieves the maximum among all $c_{M}(n,l)$. Assume that $\frac{M}{N+\frac{1}{2}} \to s \in [0,1]$, $\frac{n_M}{N+\frac{1}{2}} \to n_s$ and $\frac{l_M}{N+\frac{1}{2}} \to l_s$. Then we have
$$ \lim_{N \to \infty} \frac{1}{N+\frac{1}{2}} \log(c_M(n,l;e^{\frac{2\pi i}{N+\frac{1}{2}}})) \leq \frac{\Vol(\SS^3\backslash WL)}{2\pi} $$
Furthermore, the equality holds if and only if 
$ s=1, n_s = \frac{1}{2} \text{ and } l_s = \frac{1}{4}$.
\end{lemma}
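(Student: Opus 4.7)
The plan is to convert $\frac{1}{N+1/2}\log c_M(n_M,l_M;e^{2\pi i/(N+1/2)})$ into a Riemann sum, identify its limit with an integral of Lobachevsky type, and then reduce the problem to a three-variable maximization of a sum of Lobachevsky functions whose optimum is realized by the complete hyperbolic structure on $\SS^3\backslash WL$.

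First, writing $|1-e^{2\pi i k/(N+1/2)}| = 2|\sin(\pi k/(N+1/2))|$, one gets
\begin{align*}
\frac{\log c_M(n_M,l_M)}{N+\tfrac{1}{2}} = \frac{1}{N+\tfrac{1}{2}}\sum_{j=1}^{n_M}\Bigl[\log\bigl|2\sin\tfrac{\pi(M-l_M-j)}{N+\frac{1}{2}}\bigr| + \log\bigl|2\sin\tfrac{\pi(l_M+j)}{N+\frac{1}{2}}\bigr| - \log\bigl|2\sin\tfrac{\pi j}{N+\frac{1}{2}}\bigr|\Bigr].
\end{align*}
Since $\log|2\sin(\pi u)|$ is integrable on $[0,1]$ (the singularity at $u=0$ contributes $\tfrac{1}{N+1/2}\log|2\sin(\pi/(N+1/2))| = O(\log N/N)\to 0$), the Riemann sum converges to
\begin{align*}
\int_0^{n_s}\bigl[\log|2\sin(\pi(s-l_s-u))| + \log|2\sin(\pi(l_s+u))| - \log|2\sin(\pi u)|\bigr]du,
\end{align*}
which via $\Lambda'(\theta) = -\log|2\sin\theta|$ equals
\begin{align*}
F(s,x,y):=\frac{1}{\pi}\bigl[\Lambda(\pi(s-x-y)) - \Lambda(\pi(s-y)) + \Lambda(\pi y) - \Lambda(\pi(x+y)) + \Lambda(\pi x)\bigr]
\end{align*}
at $(s,n_s,l_s)$. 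Using $\Lambda(\theta+\pi)=\Lambda(\theta)$ and oddness of $\Lambda$, one checks $F(s,x,y) = \Re\Phi^{+(s,s)}(x,y)$, tying the bound directly to the potential function studied in Section~\ref{formula}.

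It remains to show $F(s,x,y)\leq v_8/(2\pi) = 4\Lambda(\pi/4)/\pi$ on the closed simplex $\Omega=\{(s,x,y)\mid 0\leq x,\,0\leq y,\,x+y\leq s,\,0\leq s\leq 1\}$, with equality only at $(1,1/2,1/4)$. For interior critical points, I would compute $\partial_s F = \log|2\sin(\pi(s-x-y))| - \log|2\sin(\pi(s-y))|$, which vanishes iff $x=0$ or $2s-x-2y=1$ (the supplementary-angle branch). Substituting $s=(1+x)/2+y$ into $\partial_xF=0$ and using $\sin(\pi(1-x)/2)=\cos(\pi x/2)$ together with the double-angle identity $2\sin(\pi x)=4\sin(\pi x/2)\cos(\pi x/2)$, that equation reduces to $\sin(\pi(x+y))=\sin(\pi x/2)$ and hence $y=1-3x/2$; the equation $\partial_yF=0$ analogously gives $x=1-2y$. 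Solving yields $(x,y)=(1/2,1/4)$ and $s=1$, so no interior critical point with $s<1$ exists.

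Finally I analyse the boundary of $\Omega$: on $\{s=0\}\cup\{x=0\}$, $F\equiv 0$; on $\{y=0\}$, $F(s,x,0) = \pi^{-1}[\Lambda(\pi(s-x)) - \Lambda(\pi s)]$, and on $\{x+y=s\}$, $F = \pi^{-1}[\Lambda(\pi y) - \Lambda(\pi s)]$, both bounded above by $2\Lambda(\pi/6)/\pi$ and hence strictly less than $4\Lambda(\pi/4)/\pi$. On $\{s=1\}$, the identity $\Lambda(\pi-\theta)=-\Lambda(\theta)$ gives $F(1,x,y) = \pi^{-1}[-2\Lambda(\pi(x+y)) + 2\Lambda(\pi y) + \Lambda(\pi x)]$, which is the function $\tfrac{1}{\pi}f$ of Section~\ref{WLNN}; its unique maximum on $\{x+y\leq 1\}$ is at $(1/2,1/4)$ with value $v_8/(2\pi)$. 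The main obstacle is the interior critical-point step: the three transcendental equations must be massaged via supplementary- and double-angle identities to pin down the candidate $(1,1/2,1/4)$. Once this is done, the maximizer is forced to the boundary $s=1$, and the boundary sweep rules out any other contender, yielding both the inequality and the equality case.
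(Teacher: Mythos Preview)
Your argument is correct and follows essentially the same route as the paper: pass to the Riemann sum, identify the limit with the Lobachevsky function $F(s,x,y)$, rule out interior critical points via the equation $\partial_s F=0$, and sweep the boundary faces. One small remark: on the faces $y=0$ and $x+y=s$ your bound $2\Lambda(\pi/6)/\pi$ is the sharp one (since $\Lambda$ is maximized at $\pi/6$), whereas the paper's stated bound $2\Lambda(\pi/3)/\pi$ is actually too small to cover all points on those faces --- but either way the value is strictly below $v_8/(2\pi)$, so the conclusion is unaffected.
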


\begin{proof}
Define 
$$ f(x,y,s) = \frac{1}{\pi}[\Lambda(\pi s - \pi x - \pi y) - \Lambda(\pi s - \pi y) + \Lambda(\pi y) - \Lambda(\pi x + \pi y) + \Lambda(\pi x)],$$
where $\Delta_s=\{(x,y,s) \in \RR^3 \mid 0\leq x,y \leq \pi, 0\leq x + y \leq s, s\in [0,1]\}$. One can show that when
$$s=\lim_{N\to \infty} \frac{M}{N+\frac{1}{2}}, x=\lim_{N\to\infty} \frac{n}{N+\frac{1}{2}} \text{ and } y=\lim_{N\to\infty} \frac{l}{N+\frac{1}{2}},$$
we have
$$ \lim_{N \to \infty} \frac{1}{N+\frac{1}{2}} \log(c_M(n,l;e^{\frac{2\pi i}{N+\frac{1}{2}}})) = f(x,y,s) $$
In order to find out the maximum of $f$ inside the region $\Delta_s$, we will first find out all the critical point of $f$ on $\Delta_s$, and then estimate the value of $f$ along the boundary. Note that
\begin{empheq}[left = \empheqlbrace]{align}
f_x &= \log|2\sin(\pi s - \pi x - \pi y)| + \log|2\sin(\pi x + \pi y)| - \log|2\sin (\pi x)| \\
f_y &= \log|2\sin(\pi s - \pi x - \pi y)| - \log|2 \sin (\pi s - \pi y)| - \log|2\sin (\pi y)| + \log|2\sin(\pi x + \pi y)| \\
f_s &= -\log|2\sin(\pi s - \pi x - \pi y)| + \log|2\sin(\pi s - \pi y)|
\end{empheq}

\begin{enumerate}
\item To find out the critical point of $f$, note that
\begin{alignat}{2}
f_s = 0 
&\implies \log|2\sin(\pi s - \pi x - \pi y)| = \log|2\sin(\pi s - \pi y)|  \label{*}\\
&\implies \qquad\qquad\qquad\qquad x=0 \quad\text{or} \quad x+2y = 2s-1 \label{**}
\end{alignat}
By putting (\ref{*}) into the equation $f_y=0$, we get $x=0$ or $x+2y=1$. As a result, $f_y=f_s=0$ imply $x=0$ or $s=1$. In both cases, the critical point lies on the boundary.

\item To estimate the value of $f$ along the boundary, note that
\begin{enumerate}
\item on the boundary where $s=1$, $(x,y,s) = \lt(\frac{1}{2}, \frac{1}{4}, 1 \rt)$ is a unique maximum point with maximum value 
$f\lt(\frac{1}{2}, \frac{1}{4}, 1 \rt) = \frac{1}{2\pi} \Vol(\SS^3 \backslash WL)$.

\item on the boundary where $x=0$, we have $f(0,y,s)=0$.

\item when $y=0$ or $x+y=s$, the function $f$ is given by
\begin{empheq}[left = \empheqlbrace]{align*}
f(x,0,s) &= \frac{1}{\pi} [\Lambda(\pi s - \pi x) - \Lambda(\pi s)] \\
f(x,s-x,s) &= \frac{1}{\pi} [\Lambda((\pi y) - \Lambda(\pi s)]
\end{empheq}
In both cases, we have
$$|f| \leq \frac{1}{\pi} \lt[ 2\Lambda\lt(\frac{\pi}{3}\rt)\rt] \leq \frac{1}{2\pi}\lt[6\Lambda\lt(\frac{\pi}{3}\rt)\rt] = \frac{1}{2\pi} \Vol(\SS^3 \backslash 4_1) < \frac{1}{2\pi} \Vol(\SS^3 \backslash WL)$$
\end{enumerate}
where the last inequality follows from the fact that the figure eight knot complement can be obtained by doing surgery along the belt component of the Whitehead link. This completes the proof.
\end{enumerate}
\end{proof}

From Theorem~\ref{relationship} and Lemma~\ref{uppbdW}, we can see that
\begin{align}
\limsup_{\substack{ r\to \infty \\r \text{ odd}}}\frac{2\pi}{r}\log \left(TV_{r}(\SS^3\backslash W_{a,b,c,d},e^{\frac{2\pi i}{r}})\right) 
\leq (c+d)\Vol(\SS^3\backslash WL) = \Vol (\SS^3\backslash W_{a,b,c,d})
\end{align}

Besides, from Theorem~\ref{relationship} and Theorem~\ref{CJWNN}, for $r=2N+1$, since 
\begin{align}
\frac{2\pi}{r}\log \left(TV_{r}(\SS^3\backslash W_{a,b,c,d},e^{\frac{2\pi i}{N+\frac{1}{2}}})\right) 
&\geq \frac{2\pi}{r}\log \lt| J_{\vec{N}}\left((\SS^3\backslash W_{a,1,c,d},e^{\frac{2\pi i}{r}})\right) \rt|^2 
\end{align}
and
\begin{align}
\lim_{N\to \infty}\frac{2\pi}{2N+1}\log \lt| J_{\vec{N}}\left((\SS^3\backslash W_{a,1,c,d},e^{\frac{2\pi i}{N+\frac{1}{2}}})\right) \rt|^2 &= \Vol (\SS^3\backslash W_{a,1,c,d}) = v_3||\SS^3\backslash W_{a,b,c,d}||,
\end{align}
Corollary \ref{TVW} follows from squeeze theorem.

\end{document}